\documentclass[reqno,10pt]{amsart}
\usepackage{amscd,amssymb,verbatim,array}
\usepackage{hyperref}
\usepackage{amsmath}
\usepackage[active]{srcltx} 

\setlength{\textwidth}{6.3in}
 \addtolength{\oddsidemargin}{-1.7cm}
\addtolength{\evensidemargin}{-1.7cm}

\numberwithin{equation}{section} \theoremstyle{plain}

\newtheorem{theorem}{Theorem}[section]
\newtheorem{lemma}[theorem]{Lemma}

\newtheorem{corollary}[theorem]{Corollary}
\newtheorem{definition}[theorem]{Definition}

\theoremstyle{definition}

\theoremstyle{remark}

\numberwithin{equation}{section}

\newcommand{\Det}{\operatorname{Det}}
\newcommand{\B}{\mathcal{B}}
\newcommand{\E}{\mathcal{E}}

\newcommand{\even}{\operatorname{even}}
\newcommand{\trivial}{\operatorname{trivial}}

\newcommand{\cyl}{\operatorname{cyl}}
\newcommand{\odd}{\operatorname{odd}}
\newcommand{\gr}{\operatorname{gr}}

\newcommand{\Tan}{\operatorname{tan}}
\newcommand{\Nor}{\operatorname{nor}}
\newcommand{\Dim}{\operatorname{dim}}
\newcommand{\Mod}{\operatorname{mod}}
\newcommand{\End}{\operatorname{End}}

\newcommand{\rel}{\operatorname{rel}}
\newcommand{\Abs}{\operatorname{abs}}
\newcommand{\rank}{\operatorname{rank}}
\newcommand{\pr}{\operatorname{pr}}

\newcommand{\Ker}{\operatorname{ker}}

\newcommand{\Tr}{\operatorname{Tr}}
\newcommand{\re}{\operatorname{Re}}

\newcommand{\Imm}{\operatorname{Im}}
\newcommand{\Dom}{\operatorname{Dom}}

\newcommand{\I}{\operatorname{I}}
\newcommand{\II}{\operatorname{II}}

\newcommand{\Id}{\operatorname{Id}}
\newcommand{\SF}{\operatorname{SF}}
\newcommand{\Mas}{\operatorname{Mas}}

\newcommand{\Vol}{\operatorname{vol}}


\begin{document}

\title[The refined analytic torsion and a well-posed boundary condition]
{The refined analytic torsion and a well-posed boundary condition for the odd signature operator}
\author{Rung-Tzung Huang}

\address{Department of Mathematics, National Central University, Chung-Li 320, Taiwan, Republic of China}

\email{rthuang@math.ncu.edu.tw}

\author{Yoonweon Lee}

\address{Department of Mathematics, Inha University, Incheon, 402-751, Korea}

\email{yoonweon@inha.ac.kr}

\subjclass[2000]{Primary: 58J52; Secondary: 58J28, 58J50}
\keywords{refined analytic torsion, zeta-determinant, eta-invariant, odd signature operator, well-posed boundary condition}
\thanks{The first author was supported by Taiwan Ministry of Science and Technology project 105-2115-M-008-008-MY2.
The second author was supported by the KRF Grant funded by KRF-2008-313-C00072. Both authors would like to thank the Institute of Mathematics at Academia Sinica for its hospitality and financial support during their stay in August, 2017.}

\begin{abstract}
In this paper we discuss the refined analytic torsion on an odd dimensional compact oriented Riemannian manifold with boundary
under some assumption.
For this purpose we introduce two boundary conditions which are complementary to each other and well-posed for
the odd signature operator $\B$ in the sense of Seeley.
We then show that the zeta-determinants of $\B^{2}$ and eta-invariants of $\B$ subject to these boundary
conditions are well defined by using the method of the asymptotic expansions of the traces of the heat kernels.
We use these facts to define the refined analytic torsion on a compact manifold with boundary
and show that it is invariant on the change of metrics in the interior of the manifold.
We finally describe the refined analytic torsion under these boundary conditions as an element of the determinant line.

\end{abstract}
\maketitle

\section{Introduction}

\vspace{0.2 cm}

The refined combinatorial torsion  was introduced by V. Turaev (\cite{Tu1}, \cite{Tu2}) and further developed by M. Farber and V. Turaev (\cite{FT1}, \cite{FT2}).
It is defined by the representation of the fundamental group to $GL(n, {\Bbb C})$, the Euler structure and the cohomology orientation.
As an analytic analogue of the refined combinatorial torsion, M. Braverman and T. Kappeler introduced the refined analytic torsion
on an odd dimensional closed Riemannian manifold (\cite{BK1}, \cite{BK2}), which is an element of the determinant line $\Det(H^{\bullet}(M, E))$
and is defined by using the graded zeta-determinant of the odd signature operator $\B$.
Even though these two objects do not coincide exactly, they are closely related.

In this paper we are going to discuss the refined analytic torsion on a compact Riemannian manifold with boundary under some assumption (Assumption A in Section 2.3).
Roughly, the refined analytic torsion consists of two ingredients, which are the Ray-Singer analytic torsion and the eta invariant of the odd signature operator.
To extend the refined analytic torsion to a compact manifold with boundary, we need a boundary condition which is able to define
both the Ray-Singer analytic torsion and the eta invariant.
Local boundary conditions such as the absolute and relative boundary conditions are natural to the Ray-Singer analytic torsion
but they do not fit to the eta invariant.
Similarly, the Atiyah-Patodi-Singer (APS) boundary condition is natural to the eta invariant but it does not fit to the Ray-Singer analytic torsion.
Hence, none of them is a good choice to define the refined analytic torsion.
For this reason we introduce two new boundary conditions which are complementary to each other
and show that the refined analytic torsion is well defined under these boundary conditions.

Let $(M, Y, g^{M})$ be a compact oriented Riemannian manifold with boundary $Y$.
Throughout this paper we assume that the metric $g^{M}$ is a product one on a small collar neighborhood of $Y$.
Let $E \rightarrow M$ be a complex flat vector bundle
with a flat connection $\nabla$
associated to the representation $\rho : \pi_{1}(M) \rightarrow GL(n, {\Bbb C})$.
Then $\nabla$ is extended to the de Rham operator acting on $E$-valued differential forms $\Omega^{\bullet}(M, E)$,
which we denote by $\nabla$ again.
Using $\nabla$ and the chirality operator $\Gamma$ (cf. (\ref{E:2.1})),
we define the odd signature operator $\B$ by $\B = \nabla \Gamma + \Gamma \nabla$,
which is a Dirac type operator.
It is a well-known fact that the odd signature operator $\B$ may not admit a local elliptic
boundary condition (\cite{GS}), which implies that we need to find a well-posed boundary condition for $\B$ in the sense of R. Seeley (\cite{Gr}, \cite{Se}).
The APS boundary condition is a well-posed boundary condition for a Dirac type operator, but
simple computation shows that the APS boundary condition does not fit to the de Rham operator nor the analytic torsion
and hence it is not a proper one for the refined analytic torsion.

In this paper we introduce new boundary conditions ${\mathcal P}_{-, {\mathcal L}_{0}}$ and ${\mathcal P}_{+, {\mathcal L}_{1}}$,
which are complementary to each other and are defined
by using the Hodge decomposition of $\Omega^{\bullet}(Y, E|_{Y})$ and the symplectic structure of $H^{\bullet}(Y, E|_{Y})$
under the assumption A in Section 2.3.
We show that they are well-posed boundary conditions for $\B$ in the sense of R. Seeley (\cite{Gr}, \cite{Se}).
It is not difficult to see that these boundary conditions fit to the de Rham operator and the odd signature operator $\B$
(Lemma \ref{Lemma:2.12})
and induce the following cochain complex (see (\ref{E:4.2}))

\begin{equation} \label{E:1.1}
 0 \longrightarrow \Omega^{0, \infty}_{{\mathcal P}_{-, {\mathcal L}_{0}}}(M,E) \stackrel{\nabla}{\longrightarrow}
\Omega^{1, \infty}_{{\mathcal P}_{+, {\mathcal L}_{1}}}(M,E) \stackrel{\nabla}{\longrightarrow} \cdots
\stackrel{\nabla}{\longrightarrow}\Omega^{m-1, \infty}_{{\mathcal P}_{-, {\mathcal L}_{0}}}(M,E)
\stackrel{\nabla}{\longrightarrow}\Omega^{m, \infty}_{{\mathcal P}_{+, {\mathcal L}_{1}}}(M,E) \longrightarrow 0.
\end{equation}

\noindent
The boundary conditions ${\mathcal P}_{-, {\mathcal L}_{0}}$, ${\mathcal P}_{+, {\mathcal L}_{1}}$ and
the realizations $\B_{{\mathcal P}_{-, {\mathcal L}_{0}}}$, $\B_{{\mathcal P}_{+, {\mathcal L}_{1}}}$ of $\B$ satisfy the relations
${\mathcal P}_{-, {\mathcal L}_{0}} \Gamma^{Y} = \Gamma^{Y} {\mathcal P}_{+, {\mathcal L}_{1}}$ and
$\B_{{\mathcal P}_{-, {\mathcal L}_{0}}} \Gamma = \Gamma \B_{{\mathcal P}_{+, {\mathcal L}_{1}}}$ ((\ref{E:2.16}), (\ref{E:2.19})),
which shows that (\ref{E:1.1}) satisfies the Poincar\'e duality.

We show that the realizations $\B_{{\mathcal P}_{-, {\mathcal L}_{0}}/ {\mathcal P}_{+, {\mathcal L}_{1}}}$ and
$\B^{2}_{{\mathcal P}_{-, {\mathcal L}_{0}}/ {\mathcal P}_{+, {\mathcal L}_{1}}}$ have their spectra in an arbitrarily small sector containing
the real axis except only finitely many ones (Theorem \ref{Theorem:2.22}),
which shows that we can choose Agmon angles for the zeta and eta functions arbitrarily close to any given angle $\phi$ with $ - \frac{\pi}{2} < \phi < 0$.
We use the method of the asymptotic expansions of the trace of heat kernels to show that
the zeta-determinant of $\B^{2}$ and the eta invariant of $\B$ subject to ${\mathcal P}_{-, {\mathcal L}_{0}}$
and ${\mathcal P}_{+, {\mathcal L}_{1}}$ are well-defined,
from which we define the graded determinant of $\B$ acting on even forms under these boundary conditions and finally define
the refined analytic torsion for the complex (\ref{E:1.1}) on a compact oriented Riemannian manifold with boundary.

The boundary conditions ${\mathcal P}_{-, {\mathcal L}_{0}}$ and ${\mathcal P}_{+, {\mathcal L}_{1}}$ are comparable with the absolute and
relative boundary conditions in the following sense. For each $0 \leq q \leq m$,
$\ker \Delta_{q, {\mathcal P}_{-, {\mathcal L}_{0}}} = {\mathcal H}^{q}_{\rel}(M, E) \hspace{0.1 cm} \cong  \hspace{0.1 cm}  H^{q}(M, Y ; \rho)$ and
$\ker \Delta_{q, {\mathcal P}_{+, {\mathcal L}_{1}}} = {\mathcal H}^{q}_{\Abs}(M, E) \hspace{0.1 cm} \cong  \hspace{0.1 cm} H^{q}(M ; \rho)$
(Lemma \ref{Lemma:2.11}). In particular, if $\nabla$ is a Hermitian connection, then $\Delta_{q, {\mathcal P}_{-, {\mathcal L}_{0}}}=\B^2_{q, {\mathcal P}_{-, {\mathcal L}_{0}}}$ and $\Delta_{q, {\mathcal P}_{+, {\mathcal L}_{1}}}=\B^2_{q, {\mathcal P}_{+, {\mathcal L}_{1}}}$.

When the odd signature operator $\B$ is defined from an acyclic Hermitian connection on a closed manifold, the refined analytic torsion is a complex number
whose modulus part is the Ray-Singer analytic torsion and the phase part is the rho invariant.
In this point of view we compared, in \cite{HL1} and \cite{HL2}, the analytic torsions subject to
${\mathcal P}_{-, {\mathcal L}_{0}}/{\mathcal P}_{+, {\mathcal L}_{1}}$ with the analytic torsion subject to the absolute/relative boundary conditions
(Theorem \ref{Theorem:4.1111}).
We also compared the eta invariant of $\B_{\even, {\mathcal P}_{-, {\mathcal L}_{0}}/ {\mathcal P}_{+, {\mathcal L}_{1}}}$
with the eta invariant of $\B_{\even, \Pi_{>, {\mathcal L}_{0}}/\Pi_{>, {\mathcal L}_{1}}}$, where $\Pi_{>, {\mathcal L}_{0}}$ and $\Pi_{>, {\mathcal L}_{1}}$
are generalized APS boundary conditions (Theorem \ref{Theorem:4.1112}).
Using these results, we proved in \cite{HL1} the gluing formula of the refined analytic torsion with
respect to
${\mathcal P}_{-, {\mathcal L}_{0}}$ and ${\mathcal P}_{+, {\mathcal L}_{1}}$ (Theorem \ref{Theorem:4.1113}).

Our paper is organized as follows.
In Section 2 we introduce the boundary conditions ${\mathcal P}_{-, {\mathcal L}_{0}}$ and ${\mathcal P}_{+, {\mathcal L}_{1}}$ for $\B$
under the Assumption A and
show that they are well posed for $\B$.
In Section 3 we show that the zeta functions associated to
$\B^{2}_{q, {\mathcal P}_{-, {\mathcal L}_{0}}/ {\mathcal P}_{+, {\mathcal L}_{1}}}$ acting on $q$-forms and
the eta functions associated to $\B_{\even, {\mathcal P}_{-, {\mathcal L}_{0}}/ {\mathcal P}_{+, {\mathcal L}_{1}}}$ acting on even forms
are regular at $s=0$ by computing the asymptotic expansions of the traces of the heat kernels.
In Section 4 we define the graded determinant of $\B_{\even, {\mathcal P}_{-, {\mathcal L}_{0}}/ {\mathcal P}_{+, {\mathcal L}_{1}}}$
and define the refined analytic torsion under ${\mathcal P}_{-, {\mathcal L}_{0}}/ {\mathcal P}_{+, {\mathcal L}_{1}}$
when the cochain complex (\ref{E:1.1}) is acyclic
and $\B_{\even, {\mathcal P}_{-, {\mathcal L}_{0}}/ {\mathcal P}_{+, {\mathcal L}_{1}}}$ is invertible.
We then show that the refined analytic torsion is invariant on the change of metrics in the interior of the manifold.
In Section 5 we finally define the refined analytic torsion as an element of the determinant line $\Det(H^{\bullet}(M, E))$.

As related works, B. Vertman has already studied the refined analytic torsion on a compact Riemannian manifold with boundary (\cite{Ve1}, \cite{Ve2})
but our approach is completely different from what he presented.
The comparison of these two constructions has been discussed in \cite{HL2}.
Burghelea and Haller have studied the complex-valued analytic torsion associated to a non-degenerate
symmetric bilinear form on a flat vector bundle (\cite{BH1}, \cite{BH2}), which we call the Burghelea-Haller analytic torsion.
Cappell and Miller used non-self-adjoint Laplace operator to define another complex valued analytic torsion and proved the extension of
the Cheeger-M\"uller theorem (\cite{CM}), which we call the Cappell-Miller analytic torsion.
Inspired by the result of B. Vertman, G. Su (\cite{Su}) and the first author (\cite{Hu})
studied the Burghelea-Haller analytic torsion and the Cappell-Miller analytic torsion, respectively,
on a compact oriented Riemannian manifolds with boundary.
O. M. Molina (\cite{Mo}) discussed the Burghelea-Haller analytic torsion on a compact Riemannian manifold with boundary by using the
relative/absolute boundary conditions.

The original form of this paper appeared in 2010 in arXiv. Recently the authors rewrote the paper more precisely under the Assumption A.
\vspace{0.2 cm}


\section{The boundary conditions ${\mathcal P}_{-, {\mathcal L}_{0}}$ and ${\mathcal P}_{+, {\mathcal L}_{1}}$ for the odd signature operator $\B$}

We begin this section by describing $\B$ on a compact manifold with boundary
when the product metric is given near the boundary. And then, we discuss the boundary conditions
${\mathcal P}_{-, {\mathcal L}_{0}}$ and ${\mathcal P}_{+, {\mathcal L}_{1}}$.

\subsection{The odd signature operator on a manifold with boundary}
Let $(M, Y, g^{M})$ be a compact oriented odd dimensional Riemannian
manifold with boundary $Y$, where $g^{M}$ is assumed to be a product metric near $Y$.
We denote the dimension of $M$ by $m = 2r - 1$. Suppose that $\rho : \pi_{1}(M) \rightarrow
GL(n, {\Bbb C})$ is a representation and $E = {\widehat M} \times_{\rho} {\Bbb C}^{n}$ is the associated flat vector bundle
with the flat connection $\nabla$, where ${\widehat M}$ is a universal covering space of $M$.
Without loss of generality, we may assume that $\nabla$ is a flat connection in temporal gauge (Section 3 in \cite{Ve2}).

We extend $\nabla$ to the de Rham operator
$$
\nabla : \Omega^{\bullet}(M, E) \rightarrow \Omega^{\bullet + 1}(M, E).
$$
Using the Hodge star operator $\ast_{M}$, we define the involution
$\Gamma = \Gamma(g^{M}) : \Omega^{\bullet}(M, E) \rightarrow \Omega^{m - \bullet}(M, E)$ by
\begin{equation}\label{E:2.1}
\Gamma \omega := i^{r} (-1)^{\frac{q(q+1)}{2}} \ast_{M} \omega, \qquad \omega \in \Omega^{q}(M, E),
\end{equation}
where $r = \frac{m+1}{2}$.
It is straightforward that $\Gamma^{2} = \Id$.
We define the odd signature operator $\B$ by
\begin{equation}\label{E:2.2}
\B \ = \  \B(\nabla,g^M) \ := \ \Gamma\,\nabla \ + \ \nabla\,\Gamma:\,\Omega^\bullet(M,E)\ \longrightarrow \  \Omega^\bullet(M,E).
\end{equation}
Then $\B$ is an elliptic differential operator of order $1$.
Let $N$ be a collar neighborhood of $Y$ which is isometric to $[0, \epsilon_{0}) \times Y$ for some $\epsilon_{0} > 0$. We have a natural isomorphism
\begin{equation}\label{E:2.3}
\Psi : \Omega^{p}(N, E|_{N}) \rightarrow C^{\infty}([0, \epsilon_{0}), \Omega^{p}(Y, E|_{Y}) \oplus \Omega^{p-1}(Y, E|_{Y})),
\end{equation}
defined by $\Psi(\omega_{1} + du \wedge \omega_{2}) = (\omega_{1}, \omega_{2})$,
where $u$ is the coordinate normal to $Y$ on $N$.
We denote by $\nabla^{Y}$ the restriction of $\nabla$ on $E|_{Y}$, and define the Hodge
star operator $\ast_{Y} : \Omega^{\bullet}(Y, E|_{Y}) \rightarrow
\Omega^{m-1-\bullet}(Y, E|_{Y})$ from the orientation defined by $d \Vol(M) = du \wedge d \Vol(Y)$.
We define two maps $\beta$, $\Gamma^{Y}$ by

\vspace{0.2 cm}

\begin{equation}\label{E:2.4}
 \begin{aligned}
  \beta  & :  \Omega^{p}(Y, E|_{Y}) \rightarrow  \Omega^{p}(Y, E|_{Y}), \quad \beta(\omega) = (-1)^{p} \omega \\
  \Gamma^{Y}  & :  \Omega^{p}(Y, E|_{Y}) \rightarrow  \Omega^{m-1-p}(Y,
E|_{Y}),\quad \Gamma^{Y}(\omega) = i^{r-1} (-1)^{\frac{p(p+1)}{2}}
\ast_{Y} \omega.
\end{aligned}
\end{equation}

\vspace{0.2 cm}

\noindent
It is straightforward that
\begin{equation}\label{E:2.5}
\beta^{2} = \Id,  \qquad  \left( ~ \Gamma^{Y} ~ \right)^{2} = \Id.
\end{equation}
Simple computation shows that
\begin{equation}\label{E:2.6}
 \Gamma = i \beta \Gamma^{Y} \left( \begin{array}{clcr} 0 & -1 \\ 1 & 0 \end{array} \right), \qquad
\nabla = \left( \begin{array}{clcr} 0 & 0 \\ 1 & 0 \end{array} \right) \nabla_{\partial_{u}} +
\left( \begin{array}{clcr} 1 & 0 \\ 0 & -1 \end{array} \right) \nabla^{Y}.
\end{equation}

\noindent
Hence the odd signature operator $\B$ is expressed, on $N$, by

\begin{equation}\label{E:2.8}
 \B = - i \beta \Gamma^{Y} \left\{ \left( \begin{array}{clcr} 1 & 0 \\
0 & 1 \end{array} \right) \nabla_{\partial_{u}} + \left( \begin{array}{clcr}  0 & -1
\\ -1 & 0 \end{array} \right) \left( \nabla^{Y} + \Gamma^{Y} \nabla^{Y} \Gamma^{Y} \right) \right\} \hspace{0.1 cm} = \hspace{0.1 cm}
{\mathcal \gamma} \left( \nabla_{\partial_{u}} + {\mathcal A} \right),
\end{equation}
where
\begin{equation}  \label{E:2.888}
{\mathcal \gamma} :=  - i \beta \Gamma^{Y}, \qquad {\mathcal A} := \left( \begin{array}{clcr}  0 & -1
\\ -1 & 0 \end{array} \right) \left( \nabla^{Y} + \Gamma^{Y} \nabla^{Y} \Gamma^{Y} \right).
\end{equation}
The equations (\ref{E:2.4}) and (\ref{E:2.5}) show that
\begin{equation}\label{E:2.9}
{\mathcal \gamma}^{2} = - \Id, \qquad {\mathcal \gamma} {\mathcal A} = - {\mathcal A} {\mathcal \gamma} .
\end{equation}
Since $\nabla_{\partial_{u}} \nabla^{Y} = \nabla^{Y} \nabla_{\partial_{u}}$, we have
\begin{equation}\label{E:2.10}
\B^{2} = - \left( \begin{array}{clcr}  1 & 0 \\ 0 & 1 \end{array} \right) \nabla_{\partial_{u}}^{2} +
\left( \begin{array}{clcr}  1 & 0 \\ 0 & 1 \end{array} \right) \left( \nabla^{Y} + \Gamma^{Y} \nabla^{Y} \Gamma^{Y} \right)^{2} =
\left( - \nabla_{\partial_{u}}^{2} + \B_{Y}^{2} \right) \left( \begin{array}{clcr}  1 & 0 \\ 0 & 1 \end{array} \right),
\end{equation}
where
$$
\B_{Y} = \Gamma^{Y} \nabla^{Y} + \nabla^{Y} \Gamma^{Y}.
$$

\vspace{0.3 cm}

\subsection{Green formula for the odd signature operator}

\vspace{0.2 cm}

We choose a fiber metric $h^{E}$ for the flat bundle $E$ so that
together with the Riemannian metric $g^{M}$ we define an
$L^{2}$-inner product $\langle \hspace{0.1 cm}, \hspace{0.1 cm} \rangle_{M}$ on $\Omega^{\bullet}(M, E)$.
We assume that on a collar neighborhood $N$ $h^{E}$ satisfies the following property: $h^{E}|_{N} = \pi_{Y}^{\ast} h^{E}|_{Y}$,
where $\pi_{Y} : Y \times [0, ~ \epsilon_{0}) \rightarrow Y$ is the natural projection.
We choose the dual connection $\nabla^{\prime}$ with respect to
$h^{E}$ satisfying the following property. For $\phi$, $\psi \in
C^{\infty}(M, E)$,
$$
d ( h^{E}(\phi, \psi) ) = h^{E}(\nabla\phi, \psi) + h^{E}(\phi,
\nabla^{\prime}\psi).
$$
We then extend $\nabla^{\prime}$ to the de Rham operator
$\nabla^{\prime} : \Omega^{\bullet}(M, E) \rightarrow \Omega^{\bullet + 1}(M, E)$.
The following lemma shows that the formal adjoint of $\nabla$ is $\Gamma \nabla^{\prime} \Gamma$
and the formal adjoint of $\B$ is $\B^{\prime} := \Gamma \nabla^{\prime} + \nabla^{\prime} \Gamma$.
When $\nabla = \nabla^{\prime}$, we call $\nabla$ a Hermitian connection with respect to the metric $h^{E}$.
We also define an $L^{2}$-inner product $\langle \hspace{0.1 cm}, \hspace{0.1 cm} \rangle_{Y}$ on $\Omega^{\bullet}(Y, E|_{Y})$
using $g^{Y}$, $\ast_{Y}$ and $h^{E}|_{Y}$, where $g^{Y}$, $\ast_{Y}$ are the metric and Hodge star operator on $Y$ induced from
$g^{M}$ and $\ast_{M}$.

\begin{definition} \label{Definition:2.1}
For $\phi \in \Omega^{q}(M, E)$ we write $\phi$, on a collar neighborhood $[0, \epsilon_{0}) \times Y$, by
$$\phi = \phi_{\Tan} + du \wedge \phi_{\Nor},$$
where $\phi_{\Tan} \in C^{\infty}([0, \epsilon_{0}), \Omega^{q}(Y, E|_{Y}))$ and
$\phi_{\Nor} \in C^{\infty}([0, \epsilon_{0}), \Omega^{q-1}(Y, E|_{Y}))$.
For the natural inclusion $\iota : Y \rightarrow M$,
we denote $~ \phi_{\Tan}|_{Y} := \iota^{\ast} \phi ~$ and $~ \psi_{\Nor}|_{Y} := (-1)^{q-1} \ast_{Y} \iota^{\ast} \left( \ast_{M} \phi \right)$.
\end{definition}

\noindent
The following lemma is well-known.

\vspace{0.2 cm}

\begin{lemma} \label{Lemma:2.2}
(1) For $\phi \in \Omega^{q}(M, E)$, $\psi \in \Omega^{m-q}(M,
\hspace{0.1 cm} E)$, $\langle \Gamma \phi, \hspace{0.1 cm} \psi
\rangle_{M} = \langle \phi, \hspace{0.1 cm} \Gamma \psi
\rangle_{M}$.  \newline (2) For $\phi \in \Omega^{q}(M, E)$, $\psi
\in \Omega^{q+1}(M, E)$,
$$
\langle \nabla \phi, \hspace{0.1 cm} \psi \rangle_{M} \hspace{0.1 cm} = \hspace{0.1 cm} \langle
\phi, \hspace{0.1 cm} \Gamma \nabla^{\prime} \Gamma \psi
\rangle_{M} \hspace{0.1 cm} + \hspace{0.1 cm}
\langle \phi_{\Tan}|_{Y}, \hspace{0.1 cm} \psi_{\Nor}|_{Y} \rangle_{Y}.
$$
\end{lemma}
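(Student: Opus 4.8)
The proof is entirely elementary, the only care being with signs and powers of $i$; here is how I would organize it.

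\emph{Part (1).} Since $\Gamma$ is $\mathbb{C}$-linear, it suffices to compare the scalars that appear when one inserts the definition (\ref{E:2.1}) into each side. Writing the $L^{2}$ pairing of $E$-valued forms through $h^{E}$ and $\ast_{M}$ and using the classical identities $\ast_{M}\ast_{M}=(-1)^{q(m-q)}\,\mathrm{Id}$ on $q$-forms together with the (anti)self-adjointness of $\ast_{M}$ with respect to $\langle\,\cdot\,,\,\cdot\,\rangle_{M}$, the claim reduces to an identity among the factors $i^{r}$, $\overline{i^{r}}$, $(-1)^{q(q+1)/2}$ and $(-1)^{(m-q)(m-q+1)/2}$; because $m=2r-1$ is odd these combine to give equality. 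This is the self-adjointness of the chirality operator $\Gamma$ already used in [5], [6], and it is insensitive to the presence of the boundary.

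\emph{Part (2).} I would separate an interior identity from a boundary correction. On forms supported in the interior $M\setminus Y$ one has $\langle\nabla\phi,\psi\rangle_{M}=\langle\phi,\Gamma\nabla^{\prime}\Gamma\psi\rangle_{M}$, which follows from the Leibniz rule defining the dual connection $\nabla^{\prime}$, Stokes' theorem, and the description of the $E$-twisted codifferential as $\pm\ast_{M}\nabla^{\prime}\ast_{M}$, once one checks that the scalar factors in (\ref{E:2.1}) cancel inside $\Gamma\nabla^{\prime}\Gamma$ — this is exactly the computation behind the statement, made just before the lemma, that $\B^{\prime}=\Gamma\nabla^{\prime}+\nabla^{\prime}\Gamma$ is the formal adjoint of $\B$. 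For general $\phi,\psi$ the same manipulation leaves an extra boundary integral $\int_{Y}\iota_{Y}^{*}\big(h^{E}(\phi\wedge\ast_{M}\psi)\big)$. To evaluate it I would work on the collar $N\cong[0,1)\times Y$: there $\iota_{Y}^{*}\phi=\phi_{\Tan}|_{Y}$, while the product form of the Hodge star gives $\ast_{M}\psi=\pm\,du\wedge\ast_{Y}\psi_{\Tan}\,\pm\,\ast_{Y}\psi_{\Nor}$, so only the $\psi_{\Nor}$-component survives the pullback to $Y$; hence the boundary integrand is $\pm\,h^{E}\big(\phi_{\Tan}|_{Y}\wedge\ast_{Y}\psi_{\Nor}|_{Y}\big)$, that is $\pm\langle\phi_{\Tan}|_{Y},\psi_{\Nor}|_{Y}\rangle_{Y}$. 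Alternatively — and in keeping with the computations of this section — one can insert the matrix form (\ref{E:2.6}) of $\nabla$ directly into $\langle\nabla\phi,\psi\rangle_{N}$ and integrate by parts in the normal variable $u$: the only term with a boundary contribution is $\int_{0}^{1}\langle\nabla_{\partial u}\phi_{\Tan},\psi_{\Nor}\rangle_{Y}\,du$, whose endpoint term at $u=0$ is $\pm\langle\phi_{\Tan}|_{Y},\psi_{\Nor}|_{Y}\rangle_{Y}$, while the surviving bulk terms reassemble, via the closed-manifold adjoint formula for $\nabla^{Y}$ on $Y$, into $\langle\phi,\Gamma\nabla^{\prime}\Gamma\psi\rangle_{M}$.

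The main obstacle is bookkeeping the \emph{sign} of the boundary term so that it comes out with a $+$ as stated. This forces one to fix, consistently, the orientation induced on $Y=\{u=0\}$ (equivalently whether $\partial_{u}$ is the inward or the outward normal), the precise way $\ast_{Y}$ and $\langle\,\cdot\,,\,\cdot\,\rangle_{Y}$ are induced from $\ast_{M}$ and $g^{M}$, and the powers of $i$ carried by $\Gamma^{Y}$ versus $\Gamma$; with those conventions pinned down the endpoint term at $u=0$ has the asserted sign. There is no analytic difficulty: everything is Stokes' theorem and one integration by parts on a compact manifold, so the proof is short once the constants are handled carefully.
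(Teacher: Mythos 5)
Your proposal is correct and takes essentially the same approach as the paper: part (1) is the direct scalar-factor check (the paper dismisses it as ``straightforward''), and part (2) is proved by the same Stokes/integration-by-parts computation — the paper reduces to simple tensors $\phi=\omega\otimes\mathfrak{a}$, $\psi=\xi\otimes\mathfrak{b}$, applies Stokes to $d\big(h^{E}(\mathfrak{a},\mathfrak{b})\,\omega\wedge\ast_{M}\xi\big)$, and recognizes the interior terms as $\langle\phi,\Gamma\nabla'\Gamma\psi\rangle_{M}$ and the boundary term $\int_{\partial M}h^{E}(\mathfrak{a},\mathfrak{b})\,\omega\wedge\ast_{M}\xi$ as $\langle\phi_{\Tan}|_{Y},\psi_{\Nor}|_{Y}\rangle_{Y}$, exactly as in your first route. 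The sign bookkeeping you flag is real but is handled implicitly in the paper as well, by the choice of conventions made in (\ref{E:2.1})--(\ref{E:2.4}) and the orientation of the collar.
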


\begin{proof}
The first assertion is straightforward. For the second statement
we may assume that
$$
\phi = \omega \otimes {\frak a}, \quad \psi = \xi \otimes {\frak b},
$$
where $\omega \in \Omega^{q}(M)$, $\xi \in \Omega^{q+1}(M)$, and
${\frak a}$, ${\frak b} \in C^{\infty}(M, E)$.
Then simple computation shows that
\begin{eqnarray*}
\langle \nabla \phi, \hspace{0.1 cm} \psi \rangle_{M} & = &
\int_{M} d \left( h^{E}({\frak a}, {\frak b}) \omega \wedge \ast_{M} \xi
\right) - \int_{M}  h^{E}({\frak a}, \nabla^{\prime} {\frak b}) \wedge \omega \wedge
\ast_{M} \xi
- (-1)^{q} \int_{M}  h^{E}({\frak a}, {\frak b}) \omega \wedge d \ast_{M} \xi  \\
& = & \int_{\partial M} h^{E}({\frak a}, {\frak b}) \omega \wedge \ast_{M}
\xi  - \int_{M}  h^{E}({\frak a}, \nabla^{\prime} {\frak b}) \wedge \omega \wedge
\ast_{M} \xi + \int_{M}  h^{E}({\frak a}, {\frak b}) \omega \wedge \ast_{M} d^{\ast} \xi.
\end{eqnarray*}
On the other hand, we note that $\Gamma \psi = \Gamma \xi \otimes {\frak b}$ and $\nabla^{\prime}\Gamma \psi = d \Gamma \xi \otimes
{\frak b} + (-1)^{q} \Gamma \xi \wedge \nabla^{\prime} {\frak b}$, which leads to

\begin{eqnarray*}
\Gamma \nabla^{\prime} \Gamma \psi & = & \Gamma d \Gamma \xi
\otimes {\frak b} +
(-1)^{q} \Gamma ( \Gamma \xi \wedge \nabla^{\prime} {\frak b}) \hspace{0.1 cm} = \hspace{0.1 cm}  d^{\ast} \xi \otimes {\frak b} - \ast_{M} ( (\ast_{M} \xi)
\wedge \nabla^{\prime} {\frak b} ).
\end{eqnarray*}

\noindent
By direct computation we have
$$
\langle \phi, \hspace{0.1 cm} \Gamma \nabla^{\prime} \Gamma \psi
\rangle_{M} = \int_{M} h^{E}({\frak a}, {\frak b}) \omega \wedge \ast_{M}
d^{\ast} \xi - \int_{M} h^{E}({\frak a}, \nabla^{\prime} {\frak b}) \wedge \omega \wedge \ast_{M} \xi,
$$
which proves the second statement.
\end{proof}

\vspace{0.2 cm}


\noindent
The following corollary gives the Green formula for the odd signature operator $\B$.

\begin{corollary} \label{Corollary:2.3}
(1) For $\phi \in \Omega^{q}(M, E)$, $\psi \in \Omega^{m-q-1}(M,E)$,
$$
\langle \Gamma \nabla \phi, \hspace{0.1 cm} \psi \rangle_{M} \hspace{0.1 cm}
= \hspace{0.1 cm} \langle \phi, \hspace{0.1 cm} \Gamma \nabla^{\prime} \psi \rangle_{M} \hspace{0.1 cm} + \hspace{0.1 cm}
\langle \phi_{\Tan}|_{Y}, \hspace{0.1 cm} i \beta \Gamma^{Y} \psi_{\Tan}|_{Y} \rangle_{Y}.
$$
(2) For $\phi \in \Omega^{q}(M, E)$, $\psi \in \Omega^{m-q+1}(M,E)$,
$$
\langle \nabla \Gamma \phi, \hspace{0.1 cm} \psi \rangle_{M} \hspace{0.1 cm} = \hspace{0.1 cm}
\langle \phi, \hspace{0.1 cm} \nabla^{\prime} \Gamma \psi \rangle_{M} \hspace{0.1 cm} + \hspace{0.1 cm}
\langle \phi_{\Nor}|_{Y}, \hspace{0.1 cm} i \beta \Gamma^{Y} \psi_{\Nor}|_{Y} \rangle_{Y}.
$$
(3) For $\phi$, $\psi \in \Omega^{\even}(M, E)$ or $\Omega^{\odd}(M,E)$,
$$
\langle \B \phi, \hspace{0.1 cm} \psi \rangle_{M} \hspace{0.1 cm} = \hspace{0.1 cm}
\langle \phi, \hspace{0.1 cm} \B^{\prime} \psi \rangle_{M} \hspace{0.1 cm} + \hspace{0.1 cm}
\langle \phi_{\Tan}|_{Y}, \hspace{0.1 cm} i \beta \Gamma^{Y} (\psi_{\Tan}|_{Y}) \rangle_{Y} \hspace{0.1 cm} + \hspace{0.1 cm}
\langle \phi_{\Nor}|_{Y}, \hspace{0.1 cm} i \beta \Gamma^{Y} (\psi_{\Nor}|_{Y}) \rangle_{Y}.
$$
\end{corollary}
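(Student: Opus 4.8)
The plan is to read all three identities off Lemma \ref{Lemma:2.2} together with the boundary normal form (\ref{E:2.6}) of $\Gamma$; nothing analytic is needed beyond careful bookkeeping of form degrees and one elementary adjointness fact on $Y$. Throughout, $\Gamma^{2}=\mathrm{Id}$ (and the analogous $\Gamma^{Y}\Gamma^{Y}=\mathrm{Id}$, $\beta^{2}=\mathrm{Id}$ from (\ref{E:2.5})) will be used freely.

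For (1) I would first move the outer $\Gamma$ across the $L^{2}$-pairing. As $\nabla\phi\in\Omega^{q+1}(M,E)$ and $\psi\in\Omega^{m-q-1}(M,E)$ have complementary degrees, Lemma \ref{Lemma:2.2}(1) gives $\langle\Gamma\nabla\phi,\psi\rangle_{M}=\langle\nabla\phi,\Gamma\psi\rangle_{M}$. Since $\Gamma\psi\in\Omega^{q+1}(M,E)$, applying Lemma \ref{Lemma:2.2}(2) with $\Gamma\psi$ in place of $\psi$ and using $\Gamma^{2}=\mathrm{Id}$ yields $\langle\nabla\phi,\Gamma\psi\rangle_{M}=\langle\phi,\Gamma\nabla^{\prime}\psi\rangle_{M}+\langle\phi_{\Tan}|_{Y},(\Gamma\psi)_{\Nor}|_{Y}\rangle_{Y}$. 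Reading the normal component off the isomorphism (\ref{E:2.3}) and the matrix form (\ref{E:2.6}) gives $(\Gamma\psi)_{\Nor}=i\beta\Gamma^{Y}\psi_{\Tan}$, which is precisely the boundary term in (1).

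For (2) I would instead apply Lemma \ref{Lemma:2.2}(2) directly with $\Gamma\phi\in\Omega^{m-q}(M,E)$ in place of $\phi$ and $\psi\in\Omega^{m-q+1}(M,E)$, obtaining $\langle\nabla\Gamma\phi,\psi\rangle_{M}=\langle\Gamma\phi,\Gamma\nabla^{\prime}\Gamma\psi\rangle_{M}+\langle(\Gamma\phi)_{\Tan}|_{Y},\psi_{\Nor}|_{Y}\rangle_{Y}$, then simplify the interior term by Lemma \ref{Lemma:2.2}(1) and $\Gamma^{2}=\mathrm{Id}$ to $\langle\phi,\nabla^{\prime}\Gamma\psi\rangle_{M}$. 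From (\ref{E:2.3}) and (\ref{E:2.6}) one has $(\Gamma\phi)_{\Tan}=-i\beta\Gamma^{Y}\phi_{\Nor}$, so the boundary term reads $\langle-i\beta\Gamma^{Y}(\phi_{\Nor}|_{Y}),\psi_{\Nor}|_{Y}\rangle_{Y}$; it then remains to transpose $-i\beta\Gamma^{Y}$ across the $L^{2}$-pairing on $Y$. This is the one point requiring a small argument: $\beta$ is self-adjoint, $\Gamma^{Y}$ is self-adjoint on $\Omega^{\bullet}(Y,E|_{Y})$ by the straightforward $Y$-analogue of Lemma \ref{Lemma:2.2}(1), and $\beta\Gamma^{Y}=\Gamma^{Y}\beta$ since $\dim Y=m-1$ is even; hence $\beta\Gamma^{Y}$ is self-adjoint and $i\beta\Gamma^{Y}$ is skew-adjoint (consistently with the first identity of (\ref{E:2.9})), so $\langle-i\beta\Gamma^{Y}(\phi_{\Nor}|_{Y}),\psi_{\Nor}|_{Y}\rangle_{Y}=\langle\phi_{\Nor}|_{Y},i\beta\Gamma^{Y}(\psi_{\Nor}|_{Y})\rangle_{Y}$, which gives (2).

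For (3) I would write $\B=\Gamma\nabla+\nabla\Gamma$ and $\B^{\prime}=\Gamma\nabla^{\prime}+\nabla^{\prime}\Gamma$, reduce to homogeneous $\phi\in\Omega^{q}(M,E)$ by linearity, and note that $\Gamma\nabla\phi$ has degree $m-q-1$ and $\nabla\Gamma\phi$ has degree $m-q+1$, so only the matching homogeneous pieces of $\psi$ contribute to the two pairings $\langle\Gamma\nabla\phi,\psi\rangle_{M}$ and $\langle\nabla\Gamma\phi,\psi\rangle_{M}$. Applying (1) to the former and (2) to the latter and adding, the interior contributions recombine into $\langle\phi,(\Gamma\nabla^{\prime}+\nabla^{\prime}\Gamma)\psi\rangle_{M}=\langle\phi,\B^{\prime}\psi\rangle_{M}$ (again by matching degrees) and the boundary contributions into $\langle\phi_{\Tan}|_{Y},i\beta\Gamma^{Y}(\psi_{\Tan}|_{Y})\rangle_{Y}+\langle\phi_{\Nor}|_{Y},i\beta\Gamma^{Y}(\psi_{\Nor}|_{Y})\rangle_{Y}$, using that $\Gamma^{Y}$ and $\beta$ preserve parity; the hypothesis that $\phi,\psi$ both lie in $\Omega^{\even}(M,E)$ or both in $\Omega^{\odd}(M,E)$ is precisely what keeps this degree-matching self-consistent. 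I expect the only real obstacle to be the bookkeeping itself, together with pinning down the skew-adjointness of $i\beta\Gamma^{Y}$ on $Y$, which rests on $\dim Y$ being even.
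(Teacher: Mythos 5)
Your proof is correct and fills in exactly the derivation the paper leaves implicit: parts (1) and (2) follow from Lemma \ref{Lemma:2.2} by substituting $\Gamma\psi$ (resp.\ $\Gamma\phi$), simplifying with $\Gamma^{2}=\mathrm{Id}$, and reading the boundary contribution off the normal form (\ref{E:2.6}), while part (3) is the degree-matched sum of (1) and (2). The one supplementary fact you invoke --- skew-adjointness of $i\beta\Gamma^{Y}$ on $\Omega^{\bullet}(Y,E|_{Y})$, needed to move it across the boundary pairing in (2) --- is correctly justified (self-adjointness of $\beta$ and $\Gamma^{Y}$ plus their commutation, which indeed uses that $\dim Y=m-1$ is even) and is consistent with (\ref{E:2.9}).
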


\vspace{0.3 cm}

\subsection{Boundary conditions for $\B$.}

In this subsection we define boundary conditions ${\mathcal P}_{-, {\mathcal L}_{0}}$ and ${\mathcal P}_{+, {\mathcal L}_{1}}$ under some assumption on the boundary $Y$ of $M$.
We begin with analyzing the flat bundle $E|_{Y} \rightarrow Y$.
Let $p_{Y} : {\widehat Y} \rightarrow Y$ be a universal covering space of $Y$ and
$\iota : Y \rightarrow M$ be the natural inclusion with $\iota_{\sharp} : \pi_{1}(Y) \rightarrow \pi_{1}(M)$.
From the free action of $\pi_{1}(Y)$ on ${\widehat Y}$, we put $Z := {\widehat Y}/\Ker \iota_{\sharp}$.
Then, $Z \rightarrow Y$ is a covering space of $Y$, where $\pi_{1}(Z) \cong \Ker \iota_{\sharp}$
and each fiber of $y \in Y$ is isomorphic to $\pi_{1}(Y)/\Ker \iota_{\sharp}$.
For a representation $\rho : \pi_{1}(M) \rightarrow GL(n, {\mathbb C})$, we consider representations
$\rho \circ \iota_{\sharp} : \pi_{1}(Y) \rightarrow GL(n, {\mathbb C})$ and
${\widetilde{ \rho \circ \iota_{\sharp}}} : \pi_{1}(Y)/\Ker \iota_{\sharp} \rightarrow GL(n, {\mathbb C})$.
Since $E|_{Y} \rightarrow Y$ is a flat vector bundle with the flat connection $\nabla^{Y}$, $E|_{Y}$ can be obtained by some representation of
$\pi_{1}(Y) \rightarrow GL(n, {\mathbb C})$, which is $\rho \circ \iota_{\sharp}$.
We recall that $\pi : {\widehat M} \rightarrow M$ is a universal covering space of $M$ and put $\pi^{-1}(Y) = \cup_{\alpha} {\widetilde Y}_{\alpha}$,
where each ${\widetilde Y}_{\alpha}$ is a component of $\pi^{-1}(Y)$.

\begin{lemma}
Each $q_{\alpha} : {\widetilde Y}_{\alpha} \rightarrow Y$ is a covering space of $Y$ with $\pi_{1}({\widetilde Y}_{\alpha}) = \Ker \iota_{\sharp}$
 and fiber $\pi_{1}(Y)/\Ker \iota_{\sharp}$, where $q_{\alpha} = \pi|_{{\widetilde Y}_{\alpha}}$. Moreover, each ${\widetilde Y}_{\alpha}$
 is isomorphic to $Z$ as covering spaces of $Y$.
\end{lemma}

\begin{proof}
It is straightforward that $q_{\alpha} : {\widetilde Y}_{\alpha} \rightarrow Y$ is a covering space of $Y$.
We fix ${\widetilde y}_{\alpha} \in {\widetilde Y}_{\alpha}$ and $y_{0} \in Y$ with $q_{\alpha}({\widetilde y}_{\alpha}) = y_{0}$.
For $[\beta] \in \pi_{1}(Y, y_{0})$, we consider $\iota_{\sharp}([\beta])) \in \pi_{1}(M, y_{0})$.
We lift the curve $\beta(t)$ to ${\widetilde \beta} : [0, ~1] \rightarrow {\widehat M}$ with
$\pi({\widetilde \beta}(t)) = \beta(t)$ and  ${\widetilde \beta} (0) = {\widetilde y}_{\alpha}$.
By connectedness, ${\widetilde \beta}(t) \in {\widetilde Y}_{\alpha}$ and hence ${\widetilde \beta}(t)$ is a lift of $\beta(t)$ to ${\widetilde Y}_{\alpha}$.
Moreover, $[\beta] \in \Ker \iota_{\sharp} \subset \pi_{1}(Y)$ if and only if ${\widetilde \beta} (1) = {\widetilde y}_{\alpha}$.
In general, ${\widetilde \beta} (1) = \iota_{\sharp}([\beta]) ({\widetilde y}_{\alpha})$, the action of $\iota_{\sharp}([\beta])$ to ${\widetilde y}_{\alpha}$,
which shows that the fiber of $q_{\alpha} : {\widetilde Y}_{\alpha} \rightarrow Y$ has one to one correspondence with $\pi_{1}(Y, y_{0})/\Ker \iota_{\sharp}$.
It is straightforward that ${\widetilde Y}_{\alpha}$ is isomorphic to $Z$ as covering spaces of $Y$.
\end{proof}

\begin{corollary}
\begin{eqnarray*}
E|_{Y} = p_{M}^{-1}(Y) \times_{\rho} {\mathbb C}^{n} & \cong & {\widetilde Y}_{\alpha} \times_{{\widetilde{ \rho \circ \iota_{\sharp}}}} {\mathbb C}^{n}
~ \cong ~ {\widehat Y} \times_{\rho \circ \iota_{\sharp}} {\mathbb C}^{n}.
\end{eqnarray*}
\end{corollary}

\vspace{0.2 cm}

All through this paper, we assume the following assumption.

\vspace{0.2 cm}
\noindent
{\bf Assumption A} : The representation $\rho \circ \iota_{\sharp} : \pi_{1}(Y) \rightarrow GL(n, {\mathbb C})$ is equivalent to a
unitary representation.

\vspace{0.2 cm}
\noindent
{\it Remark} : (1) Every Hermitian connection $\nabla$ satisfies the Assumption A (Proposition 1.4.21 in \cite{Ko}).

\noindent
(2) If $\iota_{\sharp} (\pi_{1}(Y))$ is a finite subgroup of $\pi_{1}(M)$, the Assumption A is satisfied.

\vspace{0.2 cm}

If the Assumption A is satisfied,
there exists a Hermitian fiber metric ${\frak h}^{Y}$ such that $\nabla^{Y}$ is a Hermitian connection with respect to ${\frak h}^{Y}$ on
$\Omega^{\bullet}(Y, E|_{Y})$ and hence $\Omega^{\bullet}(Y, E|_{Y})$ satisfies the following Hodge decomposition.
\begin{eqnarray}    \label{E:2.1000}
\Omega^{q}(Y, E|_{Y}) & = & \Omega^{q}_{-}(Y, E|_{Y}) \hspace{0.1 cm} \oplus \hspace{0.1 cm}  \Omega^{q}_{0}(Y, E|_{Y})  \oplus  \hspace{0.1 cm}
\Omega^{q}_{+}(Y, E|_{Y}) ,
\end{eqnarray}
where
\begin{eqnarray}    \label{E:2.1001}
& & \Omega^{q}_{-}(Y, E|_{Y}) ~ = ~ \nabla^{Y} \left( \Omega^{q-1}(Y, E|_{Y}) \right), \qquad
\Omega^{q}_{+}(Y, E|_{Y}) ~ = ~ \Gamma^{Y} \nabla^{Y} \Gamma^{Y} \left( \Omega^{q+1}(Y, E|_{Y}) \right),  \nonumber \\
& & \Omega_{0}^{q}(Y, E|_{Y}) ~ = ~ \Ker \nabla^{Y} \cap \Ker \Gamma^{Y} \nabla^{Y} \Gamma^{Y} \cap \Omega^{q}(Y, E|_{Y}) =:
{\mathcal H}^{q}(Y, E|_{Y}).
\end{eqnarray}

\noindent
We extend ${\frak h}^{Y}$ to $[0, ~ \epsilon_{0}) \times Y$ by using the product structure, which we denote it ${\frak h}^{Y}$ again.
Finally, we extend ${\frak h}^{Y}$ again arbitrary to obtain a Hermitian fiber metric ${\frak h}^{E}$.
In the remaining part of this paper, we fix a Hermitian fiber metric ${\frak h}^{E}$ of $E$ obtained in this way.
Let $\nabla^{\prime}$ be a dual connection with respect to ${\frak h}^{E}$. Then $\nabla$ and $\nabla^{\prime}$ induce the same flat connection $\nabla^{Y}$ on $Y_{\epsilon} := Y \times \{ \epsilon \}$, $0 \leq \epsilon < \epsilon_{0}$.
The connection $\nabla$ itself is not a Hermitian connection but its restriction $\nabla^{Y}$ to $Y$ is a Hermitian connection.

\vspace{0.2 cm}

We denote
\begin{eqnarray}     \label{E:2.1002}
{\widetilde \nabla} & = & \frac{1}{2} \left( \nabla + \nabla^{\prime} \right).
\end{eqnarray}
Then, ${\widetilde \nabla}$ is a Hermitian connection with respect to ${\frak h}^{E}$, which is not necessarily a flat connection.
If ${\widetilde \nabla} \phi = \Gamma {\widetilde \nabla} \Gamma \phi = 0$ for $\phi \in \Omega^{\bullet}(M, E)$,
simple computation shows that $\phi$ is expressed on $Y$ by

\begin{equation} \label{E:2.45}
\phi|_{Y} = \left( \nabla^{Y} \varphi_{1} + \varphi_{2} \right) + du \wedge \left( \Gamma^{Y} \nabla^{Y} \Gamma^{Y} \psi_{1} + \psi_{2} \right), \quad
\varphi_{1}, \hspace{0.1 cm} \psi_{1} \in \Omega^{\bullet}(Y, E|_{Y}),
\quad \varphi_{2}, \hspace{0.1 cm} \psi_{2} \in {\mathcal H}^{\bullet}(Y, E|_{Y}).
\end{equation}

\noindent
In other words, $\varphi_{2}$ and $\psi_{2}$ are harmonic parts of $\iota^{\ast} \phi$ and $\beta \left( \ast_{Y} \iota^{\ast} ( \ast_{M} \phi ) \right)$.
We denote ${\mathcal K}$ by

\begin{equation} \label{E:2.46}
{\mathcal K} := \{ \varphi_{2} \in {\mathcal H}^{\bullet}(Y, E|_{Y}) \mid {\widetilde \nabla} \phi = \Gamma {\widetilde \nabla} \Gamma \phi = 0 \},
\end{equation}

\noindent
where $\phi$ has the form (\ref{E:2.45}). The first assertion in Corollary \ref{Corollary:2.3} shows that
${\mathcal K}$ is perpendicular to $\Gamma^{Y} {\mathcal K}$.
If $\phi$ satisfies ${\widetilde \nabla} \phi = \Gamma {\widetilde \nabla} \Gamma \phi = 0$, so is $\Gamma \phi$. Hence

\begin{equation} \label{E:2.47}
\Gamma^{Y} {\mathcal K} = \{ \psi_{2} \in {\mathcal H}^{\bullet}(Y, E|_{Y}) \mid {\widetilde \nabla} \phi = \Gamma {\widetilde \nabla} \Gamma \phi = 0 \},
\end{equation}

\noindent
where $\phi$ has the form (\ref{E:2.45}).
We then have the following lemma (cf. Corollary 8.4 in \cite{KL}).

\vspace{0.2 cm}

\begin{lemma} \label{Lemma:2.22}
We have the following equality.
$$
{\mathcal K} \oplus \Gamma^{Y} {\mathcal K} = {\mathcal H}^{\bullet}(Y, E|_{Y}).
$$
Hence,
$( {\mathcal H}^{\bullet}(Y, E|_{Y}), \hspace{0.1 cm} \langle \hspace{0.1 cm}, \hspace{0.1 cm} \rangle_{Y}, \hspace{0.1 cm} - i \beta \Gamma^{Y} )$
is a symplectic vector space with Lagrangian subspaces ${\mathcal K}$ and $\Gamma^{Y} {\mathcal K}$.
\end{lemma}

\begin{proof}
Since ${\mathcal K} \oplus \Gamma^{Y} {\mathcal K} \subset {\mathcal H}^{\bullet}(Y, E|_{Y})$, we have
$\dim {\mathcal K} \leq \frac{1}{2} \dim {\mathcal H}^{\bullet}(Y, E|_{Y})$.
For the opposite direction of the inequality
we are going to use the scattering theory for Dirac operators (\cite{Gu}, \cite{Mu}).
Let $M_{\infty} := M \cup_{Y} \left( ( - \infty , 0 ] \times Y \right)$. We extend $E$, ${\widetilde \nabla}$,
${\widetilde \B} := \Gamma {\widetilde \nabla} + {\widetilde \nabla} \Gamma$ to $M_{\infty}$ canonically,
which we denote by $E_{\infty}$, ${\widetilde \nabla}_{\infty}$, ${\widetilde \B}_{\infty}$.
Let ${\mathcal L}_{\lim}$ be the space of the limiting values of extended $L^{2}$-solutions of ${\widetilde \B}_{\infty}$.
We refer to \cite{APS}, \cite{BW}, \cite{Gu}, \cite{Mu} for the definitions of the limiting values and extended $L^{2}$-solutions of ${\widetilde \B}_{\infty}$.
Obviously,
$
{\mathcal L}_{\lim} \subset \left( \begin{array}{clcr} {\mathcal K} \\ \Gamma^{Y} {\mathcal K} \end{array} \right).
$
It is well known that $\hspace{0.1 cm} 2 \cdot \dim {\mathcal L}_{\lim} \hspace{0.1 cm}$ is equal to the dimension
of the kernel of the tangential operator of ${\widetilde \B}$ (cf. \cite{Gu}, \cite{Mu}).
Hence,

\begin{eqnarray*}
\frac{1}{2} \dim \left( \begin{array}{clcr} {\mathcal H}^{\bullet}(Y, E|_{Y}) \\ {\mathcal H}^{\bullet}(Y, E|_{Y}) \end{array} \right) & = &
\frac{1}{2} \dim \Ker \left( \begin{array}{clcr} 0 & -1 \\ -1 & 0 \end{array} \right) ( \nabla^{Y} + \Gamma^{Y} \nabla^{Y} \Gamma^{Y} ) \\
& = &  \dim {\mathcal L}_{\lim}  \hspace{0.1 cm} \leq \hspace{0.1 cm} \dim
\left( \begin{array}{clcr} {\mathcal K} \\ \Gamma^{Y} {\mathcal K} \end{array} \right) \hspace{0.1 cm} \leq
\hspace{0.1 cm} \frac{1}{2} \dim \left( \begin{array}{clcr} {\mathcal H}^{\bullet}(Y, E|_{Y}) \\ {\mathcal H}^{\bullet}(Y, E|_{Y}) \end{array} \right),
\end{eqnarray*}
from which the result follows.
\end{proof}

\vspace{0.2 cm}

\noindent
{\it Remark} : The above lemma shows that ${\mathcal K}$ and $\Gamma^{Y} {\mathcal K}$ are the sets of all tangential and normal parts of
the limiting values of extended $L^{2}$-solutions to ${\widetilde \B}_{\infty}$ on $M_{\infty}$, respectively (cf. Corollary 8.4 in \cite{KL}).

\vspace{0.2 cm}

We put

\begin{equation} \label{E:2.14}
{\mathcal L}_{0} := \left( \begin{array}{clcr} {\mathcal K} \\ {\mathcal K} \end{array} \right), \qquad
{\mathcal L}_{1} := \left( \begin{array}{clcr} \Gamma^{Y} {\mathcal K} \\ \Gamma^{Y} {\mathcal K} \end{array} \right)
\end{equation}

\noindent
and denote by ${\mathcal P}_{{\mathcal L}_{0}/{\mathcal L}_{1}}$ the orthogonal projections onto ${\mathcal L}_{0}/{\mathcal L}_{1}$.
We next define orthogonal projections ${\mathcal P}_{-}$ and ${\mathcal P}_{+}$ as follows.

\begin{eqnarray*}
{\mathcal P}_{-}, \hspace{0.2 cm} {\mathcal P}_{+} & : &
\Omega^{\bullet}(Y, E|_{Y}) \oplus \Omega^{\bullet}(Y, E|_{Y}) \rightarrow
\Omega^{\bullet}(Y, E|_{Y}) \oplus \Omega^{\bullet}(Y, E|_{Y})  \\
{\mathcal P}_{-} & = & \nabla^{Y} \Gamma^{Y} \nabla^{Y} \Gamma^{Y}
\left( \B_{Y}^{2} + \pr_{\Ker \B_{Y}^{2}} \right)^{-1} \B_{Y}^{2}
\left( \begin{array}{clcr}  1 & 0 \\ 0 & 1 \end{array} \right), \\
{\mathcal P}_{+} & = & \Gamma^{Y} \nabla^{Y} \Gamma^{Y} \nabla^{Y}
\left( \B_{Y}^{2} + \pr_{\Ker \B_{Y}^{2}} \right)^{-1} \B_{Y}^{2}
\left( \begin{array}{clcr}  1 & 0 \\ 0 & 1 \end{array} \right).
\end{eqnarray*}

\noindent
Then ${\mathcal P}_{-}$, ${\mathcal P}_{+}$, ${\mathcal P}_{{\mathcal L}_{0}}$, and ${\mathcal P}_{{\mathcal L}_{1}}$
are projections onto
$\left( \begin{array}{clcr} \Omega^{\bullet}_{-}(Y, E|_{Y}) \\ \Omega^{\bullet}_{-}(Y, E|_{Y}) \end{array} \right)$,
$\left( \begin{array}{clcr} \Omega^{\bullet}_{+}(Y, E|_{Y}) \\ \Omega^{\bullet}_{+}(Y, E|_{Y}) \end{array} \right)$,
$\left( \begin{array}{clcr} {\mathcal K} \\ {\mathcal K} \end{array} \right)$ and
$\left( \begin{array}{clcr} \Gamma^{Y} {\mathcal K} \\ \Gamma^{Y} {\mathcal K} \end{array} \right)$, respectively.
Moreover, ${\mathcal P}_{\pm}$ are $\Psi$DO's of order $0$ and
${\mathcal P}_{{\mathcal L}_{0}/{\mathcal L}_{1}}$ are smoothing operators.

\vspace{0.2 cm}

\noindent
We define
\begin{equation}\label{E:2.15}
{\mathcal P}_{-, {\mathcal L}_{0}} = {\mathcal P}_{-} + {\mathcal P}_{{\mathcal L}_{0}},   \qquad
{\mathcal P}_{+, {\mathcal L}_{1}} = {\mathcal P}_{+} + {\mathcal P}_{{\mathcal L}_{1}}.
\end{equation}

\noindent
Then ${\mathcal L}_{0}$, ${\mathcal L}_{1}$, ${\mathcal P}_{-, {\mathcal L}_{0}}$ and ${\mathcal P}_{+, {\mathcal L}_{1}}$ satisfy

\begin{equation}\label{E:2.16}
\Gamma {\mathcal L}_{0} = {\mathcal L}_{1}, \quad \Gamma {\mathcal L}_{1} = {\mathcal L}_{0}, \qquad
{\mathcal P}_{-, {\mathcal L}_{0}} \hspace{0.1 cm} \Gamma \hspace{0.1 cm} =
\hspace{0.1 cm} \Gamma \hspace{0.1 cm} {\mathcal P}_{+, {\mathcal L}_{1}}.
\end{equation}

\noindent
In the next subsection 2.5 we show that ${\mathcal P}_{-, {\mathcal L}_{0}}$ and ${\mathcal P}_{+, {\mathcal L}_{1}}$ give
well-posed boundary conditions for $\B$.

\vspace{0.2 cm}

We define the realizations $\B_{{\mathcal P}_{-, {\mathcal L}_{0}}}$, $\B^{2}_{{\mathcal P}_{-, {\mathcal L}_{0}}}$
by the operators $\B$, $\B^{2}$ with domains

\begin{eqnarray}\label{E:2.17}
\Dom \left( \B_{{\mathcal P}_{-, {\mathcal L}_{0}}} \right) & = &
\left\{ \psi \in \Omega^{\bullet}(M, E) \mid {\mathcal P}_{-, {\mathcal L}_{0}} \left( \psi|_{Y} \right) = 0 \right\},  \nonumber\\
\Dom \left( \B^{2}_{{\mathcal P}_{-, {\mathcal L}_{0}}} \right) & = &
\left\{ \psi \in \Omega^{\bullet}(M, E) \mid {\mathcal P}_{-, {\mathcal L}_{0}} \left( \psi|_{Y} \right) = 0, \hspace{0.1 cm}
{\mathcal P}_{-, {\mathcal L}_{0}} \left( (\B \psi)|_{Y} \right) = 0 \right\}.
\end{eqnarray}

\noindent
We define $\B_{{\mathcal P}_{+, {\mathcal L}_{1}}}$, $\B^{2}_{{\mathcal P}_{+, {\mathcal L}_{1}}}$,
$\B^{2}_{\rel}$, $\B^{2}_{\Abs}$, and $\B_{\Pi_{>, {\mathcal L}_{0}}}$, $\B_{\Pi_{>, {\mathcal L}_{1}}}$
(cf. Theorem 4.12) in a similar way.
The equality (\ref{E:2.16}) shows that $\Gamma$ maps $\Dom \left( \B_{{\mathcal P}_{-, {\mathcal L}_{0}}} \right)$ isomorphically onto
$\Dom \left( \B_{{\mathcal P}_{+, {\mathcal L}_{0}}} \right)$ and vice versa. Moreover, we have

\begin{equation}\label{E:2.19}
\B_{{\mathcal P}_{-, {\mathcal L}_{0}}} \hspace{0.1 cm} \Gamma  \hspace{0.1 cm} = \hspace{0.1 cm}
\Gamma \hspace{0.1 cm} \B_{{\mathcal P}_{+, {\mathcal L}_{1}}}.
\end{equation}

\vspace{0.2 cm}

The next lemma shows that the boundary conditions ${\mathcal P}_{-, {\mathcal L}_{0}}$ and
${\mathcal P}_{+, {\mathcal L}_{1}}$ fit well to the graded structure of $\Omega^{\bullet}(M, E)$ and
the odd signature operator $\B$.

\vspace{0.2 cm}

\begin{lemma}     \label{Lemma:2.12}
(1) If $\psi \in \Dom \left( \B^{2}_{{\mathcal P}_{-, {\mathcal L}_{0}}} \right)$, then
$\nabla \psi$ belongs to $\Dom \left( \B_{{\mathcal P}_{+, {\mathcal L}_{1}}} \right)$.   \newline
(2) If $\psi \in \Dom \left( \B^{2}_{{\mathcal P}_{-, {\mathcal L}_{0}}} \right)$ is an eigenform of $\B^{2}$ with eigenvalue
$\lambda^{2}$, then $\nabla \psi$ belongs to $\Dom \left( \B^{2}_{{\mathcal P}_{+, {\mathcal L}_{1}}} \right)$ and an eigenform of $\B^{2}$ with eigenvalue
$\lambda^{2}$.  \newline
(3) Let $\psi_{1}, \cdots, \psi_{k}$ be generalized eigenforms of $\B^{2}$ with generalized eigenvalue $\lambda^{2}$
in $\Dom \left( \B^{2}_{{\mathcal P}_{-, {\mathcal L}_{0}}} \right)$ satisfying $(\B^{2} - \lambda^{2}) \psi_{j} = \psi_{j-1}$,
where $\psi_{0} = 0$.
Then $\nabla \psi_{1}, \cdots, \nabla \psi_{k}$ belong to $\Dom \left( \B^{2}_{{\mathcal P}_{+, {\mathcal L}_{1}}} \right)$
and are generalized eigenforms of $\B^{2}$ with generalized eigenvalue $\lambda^{2}$. \newline
(4) Similar statements hold for $\Dom \left( \B^{2}_{{\mathcal P}_{+, {\mathcal L}_{1}}} \right)$.
\end{lemma}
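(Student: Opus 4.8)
The plan is to establish (1) by an explicit computation with the boundary data on the collar of $Y$, and then to derive (2), (3) and (4) from (1) together with the commutation relations $\B\nabla=\nabla\B$ and $\B^{2}\nabla=\nabla\B^{2}$, both of which are immediate from $\nabla^{2}=0$.

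For (1), write $\psi=\psi_{\Tan}+du\wedge\psi_{\Nor}$ on the collar. By (\ref{E:2.6}) the boundary datum of $\nabla\psi$ corresponds, under (\ref{E:2.3}), to the pair $\bigl(\nabla^{Y}(\psi_{\Tan}|_{Y}),\ (\nabla_{\partial u}\psi_{\Tan})|_{Y}-\nabla^{Y}(\psi_{\Nor}|_{Y})\bigr)$, and by (\ref{E:2.8}) the boundary datum of $\B\psi$ equals $-i\beta\Gamma^{Y}$ applied to $\bigl((\nabla_{\partial u}\psi_{\Tan})|_{Y}-(\nabla^{Y}+\Gamma^{Y}\nabla^{Y}\Gamma^{Y})(\psi_{\Nor}|_{Y}),\ (\nabla_{\partial u}\psi_{\Nor})|_{Y}-(\nabla^{Y}+\Gamma^{Y}\nabla^{Y}\Gamma^{Y})(\psi_{\Tan}|_{Y})\bigr)$. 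I would then read the two conditions defining $\Dom(\B^{2}_{{\mathcal P}_{-,{\mathcal L}_{0}}})$ in terms of the decomposition $\Omega^{\bullet}(Y,E|_{Y})=\Imm\nabla^{Y}\oplus\Imm\Gamma^{Y}\nabla^{Y}\Gamma^{Y}\oplus{\mathcal H}^{\bullet}_{\nabla}(Y,E|_{Y})$ of Corollary \ref{Corollary:2.10}, with ${\mathcal H}^{\bullet}_{\nabla}(Y,E|_{Y})={\mathcal K}\oplus\Gamma^{Y}{\mathcal K}$ as in (\ref{E:2.49}): the vanishing of ${\mathcal P}_{-,{\mathcal L}_{0}}$ on a pair means both slots lie in $\Imm\Gamma^{Y}\nabla^{Y}\Gamma^{Y}\oplus\Gamma^{Y}{\mathcal K}$, while the vanishing of ${\mathcal P}_{+,{\mathcal L}_{1}}$ means both slots lie in $\Imm\nabla^{Y}\oplus{\mathcal K}$. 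Two observations then drive the argument: (a) $\nabla^{Y}+\Gamma^{Y}\nabla^{Y}\Gamma^{Y}$ carries $\Imm\Gamma^{Y}\nabla^{Y}\Gamma^{Y}\oplus\Gamma^{Y}{\mathcal K}$ into $\Imm\nabla^{Y}$ (it annihilates ${\mathcal H}^{\bullet}_{\nabla}$ and, on $\Imm\Gamma^{Y}\nabla^{Y}\Gamma^{Y}$, reduces to $\nabla^{Y}\Gamma^{Y}\nabla^{Y}\Gamma^{Y}$ by $(\nabla^{Y})^{2}=0$); and (b) the involution $-i\beta\Gamma^{Y}$ interchanges $\Imm\nabla^{Y}$ with $\Imm\Gamma^{Y}\nabla^{Y}\Gamma^{Y}$ and, since $\beta{\mathcal K}={\mathcal K}$ (which is exactly what makes $\Gamma{\mathcal L}_{0}={\mathcal L}_{1}$ in (\ref{E:2.16})), interchanges ${\mathcal K}$ with $\Gamma^{Y}{\mathcal K}$; hence it maps $\ker{\mathcal P}_{-,{\mathcal L}_{0}}$ bijectively onto $\ker{\mathcal P}_{+,{\mathcal L}_{1}}$. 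By (a) and ${\mathcal P}_{-,{\mathcal L}_{0}}(\psi|_{Y})=0$, the tangential (zeroth order) part of $\B$ near $Y$ already sends $\psi|_{Y}$ into $\ker{\mathcal P}_{-,{\mathcal L}_{0}}$, so the condition ${\mathcal P}_{-,{\mathcal L}_{0}}((\B\psi)|_{Y})=0$ reduces, via (b), to the single requirement that $(\nabla_{\partial u}\psi_{\Tan})|_{Y}$ and $(\nabla_{\partial u}\psi_{\Nor})|_{Y}$ both lie in $\Imm\nabla^{Y}\oplus{\mathcal K}$. Feeding this, together with ${\mathcal P}_{-,{\mathcal L}_{0}}(\psi|_{Y})=0$, back into the pair representing $(\nabla\psi)|_{Y}$ — its first slot lies in $\Imm\nabla^{Y}$ and its second is a difference of elements of $\Imm\nabla^{Y}\oplus{\mathcal K}$ — yields ${\mathcal P}_{+,{\mathcal L}_{1}}\bigl((\nabla\psi)|_{Y}\bigr)=0$, which is (1).

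For (2) and (3), the eigenvalue and chain statements are immediate from $\B^{2}\nabla=\nabla\B^{2}$ (with $\nabla\psi_{0}=0$ in the chain case). For the domain statements one notes, directly from (\ref{E:2.17}), that if $\psi\in\Dom(\B^{2}_{{\mathcal P}_{-,{\mathcal L}_{0}}})$ and $\B^{2}\psi$ is a linear combination of $\psi$ and forms already in $\Dom(\B^{2}_{{\mathcal P}_{-,{\mathcal L}_{0}}})$, then $\B\psi$ again lies in $\Dom(\B^{2}_{{\mathcal P}_{-,{\mathcal L}_{0}}})$ (since $\B\psi\in\Dom(\B_{{\mathcal P}_{-,{\mathcal L}_{0}}})$ and $\B(\B\psi)=\B^{2}\psi$ lies in $\Dom(\B_{{\mathcal P}_{-,{\mathcal L}_{0}}})$); applying (1) to $\psi$ and to $\B\psi$, and using $\nabla\B=\B\nabla$, gives $\nabla\psi,\,\B(\nabla\psi)\in\Dom(\B_{{\mathcal P}_{+,{\mathcal L}_{1}}})$, i.e. $\nabla\psi\in\Dom(\B^{2}_{{\mathcal P}_{+,{\mathcal L}_{1}}})$. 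Part (4) follows by repeating (1)--(3) with the roles of $({\mathcal P}_{-,{\mathcal L}_{0}},{\mathcal L}_{0})$ and $({\mathcal P}_{+,{\mathcal L}_{1}},{\mathcal L}_{1})$ — equivalently of $\Imm\nabla^{Y}\oplus{\mathcal K}$ and $\Imm\Gamma^{Y}\nabla^{Y}\Gamma^{Y}\oplus\Gamma^{Y}{\mathcal K}$ — interchanged; the computation is identical.

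The main obstacle is the bookkeeping in (1): one has no a priori control over the normal derivatives $(\nabla_{\partial u}\psi_{\Tan})|_{Y}$ and $(\nabla_{\partial u}\psi_{\Nor})|_{Y}$, and the crux is the observation that the tangential part of $\B$ contributes nothing modulo $\ker{\mathcal P}_{-,{\mathcal L}_{0}}$, so that the second condition in the definition of $\Dom(\B^{2}_{{\mathcal P}_{-,{\mathcal L}_{0}}})$ is exactly the one that places these normal jets in $\Imm\nabla^{Y}\oplus{\mathcal K}$. Everything after that point is formal manipulation with $\nabla^{2}=0$ and the intertwining relation (\ref{E:2.19}).
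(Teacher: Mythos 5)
Your proposal is correct, and for part (1) it is essentially the same argument as the paper's: the paper writes $\psi$ on the collar as $\psi_{\Tan}^{-}+\psi_{\Tan}^{+}+du\wedge(\psi_{\Nor}^{-}+\psi_{\Nor}^{+})$ with superscripts indicating the $\Imm{\mathcal P}_{-,{\mathcal L}_{0}}$ versus $\Imm{\mathcal P}_{+,{\mathcal L}_{1}}$ parts, derives (\ref{E:2.20}) and (\ref{E:2.23}) from the two domain conditions, and reads off $(\nabla\psi)|_{Y}\in\Imm{\mathcal P}_{-,{\mathcal L}_{0}}$ from (\ref{E:2.6}); your version phrases exactly the same bookkeeping in terms of the characterization $\ker{\mathcal P}_{-,{\mathcal L}_{0}}=(\Imm\Gamma^{Y}\nabla^{Y}\Gamma^{Y}\oplus\Gamma^{Y}{\mathcal K})^{\oplus 2}$, $\ker{\mathcal P}_{+,{\mathcal L}_{1}}=(\Imm\nabla^{Y}\oplus{\mathcal K})^{\oplus 2}$ and the two observations (a), (b), which is cleaner but not a different route. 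Where you genuinely diverge from the paper is in (2) and (3). The paper returns to the PDE on the collar, uses (\ref{E:2.25}) to extract $-\nabla_{\partial u}^{2}\psi_{\Tan}^{-}+\B_{Y}^{2}\psi_{\Tan}^{-}=\lambda^{2}\psi_{\Tan}^{-}$, concludes $\nabla_{\partial u}^{2}\psi_{\Tan}^{-}|_{Y}=0$ because $\psi_{\Tan}^{-}|_{Y}=0$, and then checks $(\B(\nabla\psi))|_{Y}$ term by term via (\ref{E:2.26}). You instead observe that $\B\psi$ itself belongs to $\Dom(\B^{2}_{{\mathcal P}_{-,{\mathcal L}_{0}}})$ because $\B^{2}\psi$ is a linear combination of forms already satisfying the first boundary condition, and then push through $\nabla\B=\B\nabla$: applying (1) to $\psi$ and to $\B\psi$ gives $\nabla\psi,\B(\nabla\psi)\in\Dom(\B_{{\mathcal P}_{+,{\mathcal L}_{1}}})$, which is precisely $\nabla\psi\in\Dom(\B^{2}_{{\mathcal P}_{+,{\mathcal L}_{1}}})$. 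This is a slicker, purely algebraic bootstrap that makes the explicit second-normal-derivative computation unnecessary, and it handles the generalized-eigensection chain in (3) by the same one-line argument; the only thing one gives up is that the paper's route displays directly the vanishing of $\nabla_{\partial u}^{2}\psi_{\Tan}^{-}|_{Y}$, which you never need. One cosmetic slip: $-i\beta\Gamma^{Y}$ is not an involution — by (\ref{E:2.9}) it squares to $-Id$ — but the property you actually use (that it interchanges the two kernels, which requires ${\mathcal K}$ to be chosen graded, as the paper implicitly assumes in (\ref{E:3.270}) and as $\Gamma{\mathcal L}_{0}={\mathcal L}_{1}$ in (\ref{E:2.16}) already demands) is correct.
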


\vspace{0.1 cm}

\begin{proof}
Since the proofs of the assertion (2) and (3) are similar to that of the assertion (1), we are going to prove (1).
We note that for $\psi \in \Omega^{\bullet}(M, E)$, $\psi$ is written on a collar neighborhood of $Y$ by
$$
\psi = \psi_{\Tan}^{-} + \psi_{\Tan}^{+} + du \wedge ( \psi_{\Nor}^{-} + \psi_{\Nor}^{+}),
$$
where $\left( \begin{array}{clcr} \psi_{\Tan}^{-} \\ \psi_{\Nor}^{-} \end{array} \right) \in
\Imm {\mathcal P}_{-, {\mathcal L}_{0}}$ and
$\left( \begin{array}{clcr} \psi_{\Tan}^{+} \\ \psi_{\Nor}^{+} \end{array} \right) \in \Imm {\mathcal P}_{+, {\mathcal L}_{1}}$.
The fact $\psi \in \Dom \left( \B_{{\mathcal P}_{-, {\mathcal L}_{0}}} \right)$ implies that

\begin{equation}\label{E:2.20}
\psi_{\Tan}^{-}|_{Y} = \psi_{\Nor}^{-}|_{Y}= 0.
\end{equation}

\noindent
We note that by (\ref{E:2.8})
\begin{eqnarray}\label{E:2.21}
  \B \psi  & = &
  - i \beta \Gamma^{Y} \left\{ \nabla_{\partial_{u}} +
\left( \begin{array}{clcr}  0 & -1 \\ -1 & 0 \end{array} \right)
\left( \nabla^{Y} + \Gamma^{Y} \nabla^{Y} \Gamma^{Y} \right) \right\}
\left(\begin{array}{clcr}  \psi_{\Tan}^{-} + \psi_{\Tan}^{+} \\
\psi_{\Nor}^{-} + \psi_{\Nor}^{+}   \end{array} \right) \nonumber \\
 & = &  - i \beta \Gamma^{Y}
\left(\begin{array}{clcr} \nabla_{\partial_{u}} \psi_{\Tan}^{-} + \nabla_{\partial_{u}} \psi_{\Tan}^{+}
- \nabla^{Y} \psi_{\Nor}^{+} - \Gamma^{Y} \nabla^{Y} \Gamma^{Y} \psi_{\Nor}^{-} \\
\nabla_{\partial_{u}} \psi_{\Nor}^{-} + \nabla_{\partial_{u}} \psi_{\Nor}^{+}
- \nabla^{Y} \psi_{\Tan}^{+} - \Gamma^{Y} \nabla^{Y} \Gamma^{Y} \psi_{\Tan}^{-} \end{array} \right).
\end{eqnarray}
The above equality with (\ref{E:2.20}) shows that
\begin{equation}\label{E:2.22}
(\B \psi)|_{Y} = - i \beta \Gamma^{Y}
\left(\begin{array}{clcr} \nabla_{\partial_{u}} \psi_{\Tan}^{-}|_{Y} + \nabla_{\partial_{u}} \psi_{\Tan}^{+}|_{Y}
- \nabla^{Y} \psi_{\Nor}^{+}|_{Y} \\
\nabla_{\partial_{u}} \psi_{\Nor}^{-}|_{Y} + \nabla_{\partial_{u}} \psi_{\Nor}^{+}|_{Y}
- \nabla^{Y} \psi_{\Tan}^{+}|_{Y}  \end{array} \right).
\end{equation}
The fact
$\B \psi \in \Dom \left( \B_{{\mathcal P}_{-, {\mathcal L}_{0}}} \right)$ implies
\begin{equation}\label{E:2.23}
\nabla_{\partial_{u}} \psi_{\Tan}^{+}|_{Y} = \nabla_{\partial_{u}} \psi_{\Nor}^{+}|_{Y} = 0.
\end{equation}
Hence, if $\psi \in \Dom \left( \B^{2}_{{\mathcal P}_{-, {\mathcal L}_{0}}} \right)$, then
(\ref{E:2.20}) and (\ref{E:2.23}) show that $\psi$ satisfies

\begin{equation}\label{E:2.24}
\psi_{\Tan}^{-}|_{Y} = \psi_{\Nor}^{-}|_{Y} = 0, \qquad
\nabla_{\partial_{u}} \psi_{\Tan}^{+}|_{Y} = \nabla_{\partial_{u}} \psi_{\Nor}^{+}|_{Y} = 0.
\end{equation}

\noindent
We next note that by (\ref{E:2.6})
\begin{eqnarray*}
 \nabla \psi & = & \left( \begin{array}{clcr} \nabla^{Y} & 0 \\
\nabla_{\partial_{u}} & - \nabla^{Y} \end{array} \right)
\left(\begin{array}{clcr}  \psi_{\Tan}^{-} + \psi_{\Tan}^{+} \\
\psi_{\Nor}^{-} + \psi_{\Nor}^{+}   \end{array} \right)  \hspace{0.1 cm}
 = \hspace{0.1 cm} \left(\begin{array}{clcr} \nabla^{Y} \psi_{\Tan}^{+} \\ \nabla_{\partial_{u}} \psi_{\Tan}^{-} + \nabla_{\partial_{u}} \psi_{\Tan}^{+}
- \nabla^{Y} \psi_{\Nor}^{+} \\
\end{array} \right).
\end{eqnarray*}
This equality together with (\ref{E:2.24}) leads to
$$
\left( \nabla \psi \right)|_{Y}
=  \left(\begin{array}{clcr} \nabla^{Y} \psi_{\Tan}^{+}|_{Y} \\ \nabla_{\partial_{u}} \psi_{\Tan}^{-}|_{Y} - \nabla^{Y} \psi_{\Nor}^{+}|_{Y} \\
\end{array} \right) \in \Imm {\mathcal P}_{-, {\mathcal L}_{0}},
$$
which shows that $\nabla \psi \in \Dom \left( \B_{{\mathcal P}_{+, {\mathcal L}_{1}}} \right)$.
\end{proof}

\vspace{0.2 cm}

The boundary conditions ${\mathcal P}_{-, {\mathcal L}_{0}}$ and ${\mathcal P}_{+, {\mathcal L}_{1}}$
have similar properties with the relative and absolute boundary conditions.

\begin{definition} \label{Definition:2.4}
Let $\phi \in \Omega^{q}(M, E)$ be expressed by $\phi = \phi_{1} + du \wedge \phi_{2}$
on a collar neighborhood of $Y$.   \newline
(1) $\phi$ satisfies the absolute boundary condition if $\phi_{2}|_{Y} = 0$ and
$(\nabla_{\partial_{u}} \phi_{1})|_{Y} = 0$.   \newline
(2) $\phi$ satisfies the relative boundary condition if $\phi_{1}|_{Y} = 0$ and
$(\nabla_{\partial_{u}} \phi_{2})|_{Y} = 0$.
\end{definition}

\noindent
We denote by $\Omega^{\bullet}_{\Abs/\rel} (M, E)$,
$\Omega^{\bullet}_{{\mathcal P}_{-, {\mathcal L}_{0}}/{\mathcal P}_{+, {\mathcal L}_{1}}} (M, E)$ the spaces of $E$-valued differential forms
satisfying the absolute/relative and ${\mathcal P}_{-, {\mathcal L}_{0}}/{\mathcal P}_{+, {\mathcal L}_{1}}$ boundary conditions, respectively.
Since $\Gamma \nabla^{\prime} \Gamma$ is the formal adjoint of $\nabla$, we define the Laplacian $\Delta_{q}$ acting on $q$-forms by
\begin{eqnarray}    \label{E:700}
\Delta_{q}& = & \Gamma \nabla^{\prime} \Gamma \nabla + \nabla \Gamma \nabla^{\prime} \Gamma.
\end{eqnarray}
If $\nabla$ is a Hermitian connection, then $\Delta_{q} = \B^{2}_{q}$. We denote by
\begin{eqnarray}   \label{E:701}
{\mathcal H}^{q}_{\Abs} (M, E) = \ker \nabla \cap \ker \Gamma \nabla^{\prime} \Gamma \cap \Omega^{q}_{\Abs} (M, E), \qquad
{\mathcal H}^{q}_{\rel} (M, E) = \ker \nabla \cap \ker \Gamma \nabla^{\prime} \Gamma \cap \Omega^{q}_{\rel} (M, E).
\end{eqnarray}

\noindent
Since $\Delta_{q}$ is same as $\B^{2}$ on a collar neighborhood of $Y$, we consider $\Delta^{q}$ on
$\Omega^{q}_{{\mathcal P}_{-, {\mathcal L}_{0}}} (M, E)$ and $\Omega^{q}_{{\mathcal P}_{+, {\mathcal L}_{1}}} (M, E)$.
Then we have the following result.

\vspace{0.2 cm}

\begin{lemma} \label{Lemma:2.11}
For each $q$, $\Delta_{q, {\mathcal P}_{-, {\mathcal L}_{0}}/{\mathcal P}_{+, {\mathcal L}_{1}}}$  is a self-adjoint operator on
$\Omega^{q}_{{\mathcal P}_{-, {\mathcal L}_{0}}/{\mathcal P}_{+, {\mathcal L}_{1}}} (M, E)$ and
$$
\ker \Delta_{q, {\mathcal P}_{-, {\mathcal L}_{0}}} = {\mathcal H}^{q}_{\rel}(M, E) \hspace{0.1 cm} \cong  \hspace{0.1 cm}  H^{q}(M, Y ; \rho), \qquad
\ker \Delta_{q, {\mathcal P}_{+, {\mathcal L}_{1}}} = {\mathcal H}^{q}_{\Abs}(M, E) \hspace{0.1 cm} \cong  \hspace{0.1 cm} H^{q}(M ; \rho).
$$
\end{lemma}

\begin{proof}
We are going to prove the lemma for the operator $\Delta_{q, {\mathcal P}_{-, {\mathcal L}_{0}}}$.
The same argument works for $\Delta_{q, {\mathcal P}_{+, {\mathcal L}_{1}}}$.
The self-adjointness comes from Corollary \ref{Corollary:2.3} and hence $\ker \Delta_{q, {\mathcal P}_{-, {\mathcal L}_{0}}} =
\ker \nabla \cap \ker \Gamma \nabla^{\prime} \Gamma \cap \Omega^{q}_{{\mathcal P}_{-, {\mathcal L}_{0}}}(M, E)$.
Let $\phi \in {\mathcal H}^{q}_{\rel}(M, E)$. Then $\phi$ on $Y$ has the form of (cf. (\ref{E:2.45}) - (\ref{E:2.47}))
$$
\phi|_{Y} = du \wedge \left( \Gamma^{Y} \nabla^{Y} \Gamma^{Y} \psi_{1} + \psi_{2} \right) \quad \text{with} \quad \psi_{2} \in \Gamma^{Y} {\mathcal K}.
$$
Hence,
$$
{\mathcal P}_{-, {\mathcal L}_{0}}
\left( \begin{array}{clcr} 0 \\ \Gamma^{Y} \nabla^{Y} \Gamma^{Y} \psi_{1} + \psi_{2} \end{array} \right) =
\left( \begin{array}{clcr} 0 \\ 0 \end{array} \right),
$$
which shows that $\phi \in \ker \Delta_{q, {\mathcal P}_{-, {\mathcal L}_{0}}} \cap \Omega^{q}(M, E)$.
Let $\phi \in \ker \Delta_{q, {\mathcal P}_{-, {\mathcal L}_{0}}} \cap \Omega^{q}(M, E)$. Then on $Y$
$$
\phi|_{Y} = \left( \nabla^{Y} \varphi_{1} + \varphi_{2} \right) + du \wedge \left( \Gamma^{Y} \nabla^{Y} \Gamma^{Y} \psi_{2} + \psi_{2} \right), \quad
\phi_{1} \in {\mathcal K}, \quad \psi_{2} \in \Gamma^{Y} {\mathcal K}.
$$
Since $\phi \in \Dom \left( \B_{q, {\mathcal P}_{-, {\mathcal L}_{0}}} \right)$,
$$
{\mathcal P}_{-, {\mathcal L}_{0}}
\left( \begin{array}{clcr}  \nabla^{Y} \varphi_{1} + \varphi_{2} \\ \Gamma^{Y} \nabla^{Y} \Gamma^{Y} \psi_{2} + \psi_{2}
\end{array} \right)   ~ = ~
\left( \begin{array}{clcr}  \nabla^{Y} \varphi_{1} + \varphi_{2} \\ 0 \end{array} \right)
~ = ~ \left( \begin{array}{clcr}  0 \\ 0 \end{array} \right),
$$
which shows that $\phi \in {\mathcal H}^{q}_{\rel}(M, E)$. This completes the proof of the lemma.
\end{proof}

\vspace{0.3 cm}

\subsection{Well-posed Boundary value problem for Dirac type operators}

In this subsection we show that both ${\mathcal P}_{-, {\mathcal L}_{0}}$ and ${\mathcal P}_{+, {\mathcal L}_{1}}$
give well-posed boundary conditions for the odd signature operator $\B$.
We begin with brief description of the well-posed boundary value problem for a Dirac type operator on a
compact oriented manifold with boundary. We refer to \cite{Gr} and \cite{Se} for more details.
Let $X$ be a compact manifold with boundary $Y$ and $F \rightarrow X$ be a Hermitian vector bundle of rank $k$ over $X$.
We suppose that ${\mathcal D} : C^{\infty}(F) \rightarrow C^{\infty}(F)$ is a first order elliptic differential operator on $X$ which
is represented, on a small collar neighborhood of $Y$, by
\begin{equation}\label{E:2.28}
{\mathcal D} = G ( \partial_{u} + A ),
\end{equation}
where $G$ is a unitary bundle automorphism of $F|_{Y}$ and $A$ is a self-adjoint elliptic differential operator of order $1$
acting on $C^{\infty}(F|_{Y})$.
We call ${\mathcal D}$ a Dirac type operator.
For $s > \frac{1}{2}$, we define the Cauchy data space $N_{+}^{s}$ by
$$
N_{+}^{s} = \{ \hspace{0.1 cm} \phi|_{Y} \mid \phi \in H^{s}(M, F), \hspace{0.1 cm} {\mathcal D} \phi = 0 \hspace{0.1 cm} \}
$$
and define the Calder\'on projector ${\mathcal C}^{+}$ by the orthogonal projection from
$H^{s - \frac{1}{2}}(Y, F|_{Y})$ to $N_{+}^{s}$.
We put ${\mathcal C}^{-} = I - {\mathcal C}^{+}$ and recall that
the principal symbols $\sigma_{L}({\mathcal C}^{\pm})$ of ${\mathcal C}^{\pm}$ are bundle maps
$\sigma_{L}({\mathcal C}^{\pm}) : T^{\ast}Y \rightarrow \End (F|_{Y})$.
We put $N_{\pm}(x^{\prime}, \xi^{\prime}) = \Imm \left( \sigma_{L}({\mathcal C}^{\pm})(x^{\prime}, \xi^{\prime}) \right) \subset
F_{x^{\prime}} = {\Bbb C}^{k}$.
The following definition is due to Seeley (\cite{Gr}, \cite{Se}).

\vspace{0.2 cm}

\begin{definition}\label{Definition:2.14}
(Well-posedness). Let $X$ be a compact oriented manifold with boundary $Y$ and
${\mathcal D} : C^{\infty}(F) \rightarrow C^{\infty}(F)$ be a Dirac type operator. Suppose that
$B : C^{\infty}(F|_{Y}) \rightarrow C^{\infty}(F|_{Y})$ is a classical $\Psi$DO of order $0$. Then $B$ is well-posed
for ${\mathcal D}$ when :  \newline
(1) The mapping defined by $B$ in $H^{s}(F|_{Y})$ has closed range for each $s \in {\Bbb R}$.  \newline
(2) For each $(x^{\prime}, \xi^{\prime}) \in T^{\ast}Y$ with $\parallel \xi^{\prime} \parallel = 1$, the principal
symbol $b^{0}(x^{\prime}, \xi^{\prime})$ for $B$ maps $N_{+}(x^{\prime}, \xi^{\prime})$ injectively onto the
range of $b^{0}(x^{\prime}, \xi^{\prime})$ in ${\Bbb C}^{k}$.
\end{definition}

\vspace{0.2 cm}

When $B$ is well-posed for ${\mathcal D}$, the realization ${\mathcal D}_{B}$ is a Fredholm operator
and has a compact resolvent. In particular, its spectrum is discrete and each generalized eigenvalue
has a finite multiplicity.
Let $\Pi_{\geq}$ be an orthogonal projection onto the non-negative eigenspaces of $A$,
where $A$ is the self-adjoint tangential
operator in (\ref{E:2.28}). It is a well-known fact that $\sigma_{L}(\Pi_{\geq})(x^{\prime}, \xi^{\prime})$
is an orthogonal projection onto
the space of positive eigenvectors of $\sigma_{L}(A)(x^{\prime}, \xi^{\prime})$.
If ${\mathcal D}$ is a Dirac type operator, it is also a well-known fact that
${\mathcal C}^{+} - \Pi_{\geq}$ is a classical $\Psi$DO of order $-1$ and hence
$\sigma_{L}({\mathcal C}^{+})(x^{\prime}, \xi^{\prime})$ is an orthogonal projection onto
the space of positive eigenvectors of $\sigma_{L}(A)(x^{\prime}, \xi^{\prime})$.

Now we go back to the odd signature operator. From the Assumption A, we have
$$
A = \left( \begin{array}{clcr}  0 & -1 \\ -1 & 0 \end{array} \right)
\left( \nabla^{Y} + \Gamma^{Y} \nabla^{Y} \Gamma^{Y} \right).
$$
It is well-known (cf. p.47 in \cite{Gi}) that

\begin{eqnarray} \label{E:2.29}
& & \sigma_{L}(\nabla^{Y}), \hspace{0.1 cm} \sigma_{L}(\Gamma^{Y} \nabla^{Y} \Gamma^{Y}) \hspace{0.1 cm} : \hspace{0.1 cm} T^{\ast}Y \rightarrow
\End \left( \wedge^{\bullet} T^{\ast}Y \otimes E|_{Y} \right) \nonumber \\
& & \sigma_{L}(\nabla^{Y}) (x^{\prime}, \xi^{\prime}) (\omega) = i \xi^{\prime} \wedge \omega, \quad
\sigma_{L}(\Gamma^{Y} \nabla^{Y} \Gamma^{Y}) (x^{\prime}, \xi^{\prime}) (\omega) =
- i {\xi^{\prime}} \lrcorner \hspace{0.1 cm} \omega,
\end{eqnarray}
where ${\xi^{\prime}} \lrcorner \hspace{0.1 cm}$ is the interior product with $\xi^{\prime}$.
This leads to
\begin{eqnarray*}
\sigma_{L}(A) (x^{\prime}, \xi^{\prime}) = \left( \begin{array}{clcr}
0 & (- i \xi^{\prime} \wedge ~)  + (i  {\xi^{\prime}} \lrcorner ~) \ \\
 (- i \xi^{\prime} \wedge ~)  + (i {\xi^{\prime}} \lrcorner ~)  &  0
\end{array} \right) : \left( \begin{array}{clcr} \left( \wedge^{\bullet} T^{\ast}Y \otimes E \right)_{x^{\prime}} \\
\left( \wedge^{\bullet} T^{\ast}Y \otimes E \right)_{x^{\prime}} \end{array} \right) \rightarrow
\left( \begin{array}{clcr} \left( \wedge^{\bullet} T^{\ast}Y \otimes E \right)_{x^{\prime}} \\
\left( \wedge^{\bullet} T^{\ast}Y \otimes E \right)_{x^{\prime}} \end{array} \right).
\end{eqnarray*}
Simple computation shows that
\begin{eqnarray*}
\text{the positive eigenspace of} \hspace{0.1 cm} (- i \xi^{\prime} \wedge ~) ~ + ~ (i \hspace{0.1 cm} {\xi^{\prime}} \lrcorner ~) \hspace{0.1 cm}
& = &
\{ \parallel \xi^{\prime} \parallel \omega - i \xi^{\prime} \wedge \omega \mid {\xi^{\prime}} \lrcorner \hspace{0.1 cm} \omega = 0 \},  \\
\text{the negative eigenspace of} \hspace{0.1 cm} (- i \xi^{\prime} \wedge ~) ~ + ~ (i \hspace{0.1 cm} {\xi^{\prime}} \lrcorner ~)
\hspace{0.1 cm} & = &
\{ \parallel \xi^{\prime} \parallel \omega + i \xi^{\prime} \wedge \omega \mid {\xi^{\prime}} \lrcorner \hspace{0.1 cm} \omega = 0 \}.
\end{eqnarray*}
Hence, the positive eigenspace of $\sigma_{L}(A)(x^{\prime}, \xi^{\prime})$, which is $N_{+}(x^{\prime}, \xi^{\prime})$, is spanned by

\begin{equation} \label{E:2.32}
\left( \begin{array}{clcr} \parallel \xi^{\prime} \parallel \omega - i \xi^{\prime} \wedge \omega \\
\parallel \xi^{\prime} \parallel \omega - i \xi^{\prime} \wedge \omega
\end{array} \right)
\quad \text{and} \quad
\left( \begin{array}{clcr} \parallel \xi^{\prime} \parallel \omega + i \xi^{\prime} \wedge \omega \\
- \parallel \xi^{\prime} \parallel \omega - i \xi^{\prime} \wedge \omega
\end{array} \right),
\end{equation}

\noindent
and $\sigma_{L}({\mathcal C}^{+})(x^{\prime}, \xi^{\prime})$ is an orthogonal projection onto the above space.

We next compute the principal symbols of ${\mathcal P}_{-}$ and ${\mathcal P}_{+}$ by using (\ref{E:2.29}).
We first note that

\begin{eqnarray*}
& & \sigma_{L}\left( \nabla^{Y} \Gamma^{Y} \nabla^{Y} \Gamma^{Y} \left( \Gamma^{Y} \nabla^{Y} \Gamma^{Y} \nabla^{Y} + \nabla^{Y} \Gamma^{Y} \nabla^{Y} \Gamma^{Y} +
\pr_{\ker \B_{Y}^{2}} \right)^{-1} \right)(x^{\prime}, \xi^{\prime})
\hspace{0.2 cm} =  \hspace{0.1 cm}  \frac{1}{\parallel \xi^{\prime} \parallel^{2}} (i \xi^{\prime} \wedge ) (- i {\xi^{\prime}} \lrcorner \hspace{0.1 cm} ) \\
& = & \text{the orthogonal projection onto} \quad
\{\xi^{\prime} \wedge \omega \in \left( \wedge^{\bullet} T^{\ast}Y \otimes E \right)_{x^{\prime}} \mid {\xi^{\prime}} \lrcorner \hspace{0.1 cm} \omega = 0 \},
\end{eqnarray*}
\begin{eqnarray*}
& & \sigma_{L} \left(  \Gamma^{Y} \nabla^{Y} \Gamma^{Y} \nabla^{Y} \left( \Gamma^{Y} \nabla^{Y} \Gamma^{Y} \nabla^{Y} + \nabla^{Y} \Gamma^{Y} \nabla^{Y} \Gamma^{Y}
+ \pr_{\ker \B_{Y}^{2}} \right)^{-1} \right) (x^{\prime}, \xi^{\prime})
\hspace{0.2 cm} =  \hspace{0.1 cm} \frac{1}{\parallel \xi^{\prime} \parallel^{2}} (- i {\xi^{\prime}} \lrcorner \hspace{0.1 cm}) (i \xi^{\prime} \wedge) \\
& = & \text{the orthogonal projection onto} \quad
\{ \omega \in \left( \wedge^{\bullet} T^{\ast}Y \otimes E \right)_{x^{\prime}} \mid {\xi^{\prime}} \lrcorner \hspace{0.1 cm} \omega = 0 \},
\end{eqnarray*}

\vspace{0.2 cm}

\noindent
which shows that
$$
\sigma_{L}({\mathcal P}_{-}) (x^{\prime}, \xi^{\prime}), \quad \sigma_{L}({\mathcal P}_{+}) (x^{\prime}, \xi^{\prime}) :
 \left( \begin{array}{clcr} \left( \wedge^{\bullet} T^{\ast}Y \otimes E \right)_{x^{\prime}} \\
\left( \wedge^{\bullet} T^{\ast}Y \otimes E \right)_{x^{\prime}} \end{array} \right) \rightarrow
\left( \begin{array}{clcr} \left( \wedge^{\bullet} T^{\ast}Y \otimes E \right)_{x^{\prime}} \\
\left( \wedge^{\bullet} T^{\ast}Y \otimes E \right)_{x^{\prime}} \end{array} \right)
$$
are orthogonal projections onto
\begin{eqnarray*}
\left( \begin{array}{clcr}
\{\xi^{\prime} \wedge \omega \in \left( \wedge^{\bullet} T^{\ast}Y \otimes E \right)_{x^{\prime}} \mid {\xi^{\prime}} \lrcorner \hspace{0.1 cm} \omega = 0 \} \\
\{\xi^{\prime} \wedge \omega \in \left( \wedge^{\bullet} T^{\ast}Y \otimes E \right)_{x^{\prime}} \mid {\xi^{\prime}} \lrcorner \hspace{0.1 cm} \omega = 0 \}
\end{array} \right),
\qquad
\left(\begin{array}{clcr}
\{ \omega \in \left( \wedge^{\bullet} T^{\ast}Y \otimes E \right)_{x^{\prime}} \mid {\xi^{\prime}} \lrcorner \hspace{0.1 cm} \omega = 0 \}   \\
\{ \omega \in \left( \wedge^{\bullet} T^{\ast}Y \otimes E \right)_{x^{\prime}} \mid {\xi^{\prime}} \lrcorner \hspace{0.1 cm} \omega = 0 \}
\end{array} \right),
\end{eqnarray*}
respectively.

\begin{lemma}\label{Lemma:2.15}
${\mathcal P}_{-}$ and ${\mathcal P}_{+}$ are well-posed boundary conditions for $\B$.
\end{lemma}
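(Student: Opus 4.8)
The plan is to verify directly the two conditions in Definition~\ref{Definition:2.14}, with $B = {\mathcal P}_{-}$ and $B = {\mathcal P}_{+}$, using the explicit description (\ref{E:2.32}) of $N_{+}(x^{\prime}, \xi^{\prime})$ and the principal symbols of ${\mathcal P}_{\pm}$ computed just above.

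Condition (1) should be essentially formal. By construction ${\mathcal P}_{-}$ and ${\mathcal P}_{+}$ are classical $\Psi$DO's of order $0$ which are idempotent, hence on each Sobolev space $H^{s}(F|_{Y})$ they act as bounded projections; since the range of a bounded projection $P$ equals $\ker(I - P)$, it is closed. I would simply record this.

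For condition (2) I would fix $(x^{\prime}, \xi^{\prime}) \in T^{\ast}Y$ with $\parallel \xi^{\prime} \parallel = 1$ and argue pointwise, using the orthogonal splitting
$$
(\wedge^{\bullet} T^{\ast}M \otimes E)|_{Y} \hspace{0.1 cm} = \hspace{0.1 cm} \ker \left( \xi^{\prime} \lrcorner \hspace{0.1 cm} \right) \oplus \Imm \left( \xi^{\prime} \wedge \hspace{0.1 cm} \right),
$$
in which $\Imm(\xi^{\prime} \wedge \hspace{0.1 cm}) = \{ \xi^{\prime} \wedge \omega \mid \xi^{\prime} \lrcorner \hspace{0.1 cm} \omega = 0 \}$ and $\xi^{\prime} \wedge \hspace{0.1 cm}$ restricts to an isomorphism of $\ker(\xi^{\prime} \lrcorner \hspace{0.1 cm})$ onto $\Imm(\xi^{\prime} \wedge \hspace{0.1 cm})$. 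Combining the two generators in (\ref{E:2.32}) with coefficient forms $\alpha, \beta \in \ker(\xi^{\prime} \lrcorner \hspace{0.1 cm})$, a general element of $N_{+}(x^{\prime}, \xi^{\prime})$ has the form
$$
u \hspace{0.1 cm} = \hspace{0.1 cm} \left( \begin{array}{clcr} (\alpha + \beta) - i \, \xi^{\prime} \wedge (\alpha - \beta) \\ (\alpha - \beta) - i \, \xi^{\prime} \wedge (\alpha + \beta) \end{array} \right).
$$
Because $\sigma_{L}({\mathcal P}_{-})(x^{\prime}, \xi^{\prime})$ acts in each slot as the orthogonal projection onto $\Imm(\xi^{\prime} \wedge \hspace{0.1 cm})$ and $\sigma_{L}({\mathcal P}_{+})(x^{\prime}, \xi^{\prime})$ as the orthogonal projection onto $\ker(\xi^{\prime} \lrcorner \hspace{0.1 cm})$, one finds that $\sigma_{L}({\mathcal P}_{-})(x^{\prime}, \xi^{\prime}) u$ is the column with entries $-i \, \xi^{\prime} \wedge (\alpha - \beta)$ and $-i \, \xi^{\prime} \wedge (\alpha + \beta)$, while $\sigma_{L}({\mathcal P}_{+})(x^{\prime}, \xi^{\prime}) u$ is the column with entries $\alpha + \beta$ and $\alpha - \beta$. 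From these formulas I would read off that the restriction of $\sigma_{L}({\mathcal P}_{\pm})(x^{\prime}, \xi^{\prime})$ to $N_{+}(x^{\prime}, \xi^{\prime})$ is injective (using that $(\alpha, \beta) \mapsto (\alpha - \beta, \alpha + \beta)$ is a linear automorphism of $\ker(\xi^{\prime} \lrcorner \hspace{0.1 cm}) \oplus \ker(\xi^{\prime} \lrcorner \hspace{0.1 cm})$ and that $\xi^{\prime} \wedge \hspace{0.1 cm}$ is injective on $\ker(\xi^{\prime} \lrcorner \hspace{0.1 cm})$), and that its image is all of $\Imm(\xi^{\prime} \wedge \hspace{0.1 cm}) \oplus \Imm(\xi^{\prime} \wedge \hspace{0.1 cm})$, respectively $\ker(\xi^{\prime} \lrcorner \hspace{0.1 cm}) \oplus \ker(\xi^{\prime} \lrcorner \hspace{0.1 cm})$, which is exactly the range of the corresponding principal symbol. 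That establishes condition (2).

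The only step that requires care is the bookkeeping in condition (2): expressing an arbitrary Cauchy-data vector via the two families in (\ref{E:2.32}) and then tracking the projections across the $\ker(\xi^{\prime} \lrcorner \hspace{0.1 cm}) \oplus \Imm(\xi^{\prime} \wedge \hspace{0.1 cm})$ decomposition. Everything else, including condition (1), is immediate.
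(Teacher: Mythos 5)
Your proof is correct and follows essentially the same route as the paper's: both verify the two Seeley conditions of Definition~\ref{Definition:2.14} directly, using the principal-symbol formulas for ${\mathcal P}_{\pm}$ computed just before the lemma and the explicit generators of $N_{+}(x^{\prime},\xi^{\prime})$ recorded in (\ref{E:2.32}). The only difference is cosmetic: for condition (2) you parametrize a general element of $N_{+}$ by two free coefficients $\alpha,\beta \in \ker(\xi^{\prime}\lrcorner\,)$ and thereby make the injectivity of $\sigma_{L}({\mathcal P}_{\pm})|_{N_{+}}$ explicit, whereas the paper applies the symbol to each generator separately and observes that the image fills the range, leaving injectivity to the implicit equality of dimensions.
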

\begin{proof}
We are going to check that ${\mathcal P}_{-}$ is well-posed for $\B$. The same argument works for ${\mathcal P}_{+}$.
We note that for each $s \in {\Bbb R}$,
\begin{eqnarray*}
H^{s}\left( (\wedge^{\bullet} T^{\ast}M \otimes E)|_{Y} \right)
& = & \left( \Imm {\mathcal P}_{-} \cap H^{s}\left( (\wedge^{\bullet} T^{\ast}M \otimes E)|_{Y} \right) \right)
\oplus \left( \Imm {\mathcal P}_{+} \cap H^{s}\left( (\wedge^{\bullet} T^{\ast}M \otimes E)|_{Y} \right) \right) \\
& & \oplus \left( \Imm {\mathcal P}_{\left( \Ker \nabla^{Y} \cap \Ker \Gamma^{Y} \nabla^{Y} \Gamma^{Y} \right)} \cap
H^{s}\left( (\wedge^{\bullet} T^{\ast}M \otimes E)|_{Y} \right) \right),
\end{eqnarray*}
which shows that the range of ${\mathcal P}_{-}$ in $H^{s}\left( (\wedge^{\bullet} T^{\ast}M \otimes E)|_{Y} \right)$
is a closed subspace of $H^{s}\left( (\wedge^{\bullet} T^{\ast}M \otimes E)|_{Y} \right)$.
In view of (\ref{E:2.32}) we next note that
\begin{eqnarray*}
\sigma_{L}({\mathcal P}_{-}) ((x^{\prime}, \xi^{\prime}))
\left( \begin{array}{clcr} \parallel \xi^{\prime} \parallel \omega - i \xi^{\prime} \wedge \omega \\
\parallel \xi^{\prime} \parallel \omega - i \xi^{\prime} \wedge \omega
\end{array} \right)
& = & \left( \begin{array}{clcr} - i \xi^{\prime} \wedge \omega \\ - i \xi^{\prime} \wedge \omega
\end{array} \right),  \\
\sigma_{L}({\mathcal P}_{-}) ((x^{\prime}, \xi^{\prime}))
\left( \begin{array}{clcr} \parallel \xi^{\prime} \parallel \omega + i \xi^{\prime} \wedge \omega \\
- \parallel \xi^{\prime} \parallel \omega - i \xi^{\prime} \wedge \omega
\end{array} \right)
& = & \left( \begin{array}{clcr} i \xi^{\prime} \wedge \omega \\ - i \xi^{\prime} \wedge \omega
\end{array} \right).
\end{eqnarray*}
The above equalities show that
$\sigma_{L}({\mathcal P}_{-}) ((x^{\prime}, \xi^{\prime})) \left( N_{+}(x^{\prime}, \xi^{\prime}) \right)$
is spanned by $\left( \begin{array}{clcr} i \xi^{\prime} \wedge \omega \\ 0 \end{array} \right)$ and
$\left( \begin{array}{clcr} 0 \\ i \xi^{\prime} \wedge \omega \end{array} \right)$,
which is same as $\Imm \left( \sigma_{L}({\mathcal P}_{-}) ((x^{\prime}, \xi^{\prime})) \right)$.
Hence, ${\mathcal P}_{-}$ is well-posed for $\B$.
\end{proof}

\vspace{0.3 cm}

\subsection{Agmon angles for the operators $\B_{{\mathcal P}_{-, {\mathcal L}_{0}}}$ and $\B^{2}_{{\mathcal P}_{-, {\mathcal L}_{0}}}$}

In this subsection we prove an analogue of Lemma 4.1 in \cite{CM},
which shows the distribution of generalized eigenvalues of $\B_{{\mathcal P}_{-, {\mathcal L}_{0}}}$ and $\B^{2}_{{\mathcal P}_{-, {\mathcal L}_{0}}}$. From this fact we can choose an Agmon angle arbitrarily close to any given angle $\phi$ for $- \frac{\pi}{2} < \phi < 0$.

Since $\nabla^{Y}$ is a Hermitian connection with respect to ${\frak h}^{Y}$, Corollary 2.3 and (\ref{E:2.1000}) show that
for $\phi$, $\psi \in \Dom \left( \B_{{\mathcal P}_{-, {\mathcal L}_{0}}} \right)$, we have
 we have  $ \hspace{0.2 cm}
\langle \B \phi, \hspace{0.1 cm} \psi \rangle_{M} \hspace{0.1 cm} = \hspace{0.1 cm}
\langle \phi, \hspace{0.1 cm} \B^{\prime} \psi \rangle_{M}$.
We define operators ${\mathcal U}$ and ${\mathcal F}$ by
\begin{equation}  \label{E:2.30}
{\mathcal U} = \frac{1}{2} \left( \B + \B^{\prime} \right), \qquad
{\mathcal F} = \frac{1}{2} \left( \B -  \B^{\prime} \right).
\end{equation}
Then ${\mathcal U}$ is an elliptic $\Psi$DO of order $1$ having the same principal symbol as $\B$ and
${\mathcal F}$ is a $\Psi$DO of order $0$. In particular, ${\mathcal F}$ is a bounded operator and
${\mathcal U}_{{\mathcal P}_{-, {\mathcal L}_{0}}}$ is a self-adjoint operator with
$\B_{{\mathcal P}_{-, {\mathcal L}_{0}}} = {\mathcal U}_{{\mathcal P}_{-, {\mathcal L}_{0}}} + {\mathcal F}$,
which leads to the following result.

\vspace{0.2 cm}

\begin{theorem}  \label{Theorem:2.22}
In the decomposition $\B = {\mathcal U} + {\mathcal F}$, we put
${\mathcal N}_{0} = \parallel {\mathcal F} \parallel $.
Then : \newline
(1) If $\lambda$ is an eigenvalue of $\B_{{\mathcal P}_{-, {\mathcal L}_{0}}}$, then
$\vert \Imm \lambda \vert \leq {\mathcal N}_{0}$.  \newline
(2) If $\mu = \lambda^{2}$ is an eigenvalue of $\B^{2}_{{\mathcal P}_{-, {\mathcal L}_{0}}}$, then
$\re \mu  \geq \frac{1}{4 {\mathcal N}^{2}_{0}} \left( \Imm \mu \right)^{2} - {\mathcal N}^{2}_{0}$.
\end{theorem}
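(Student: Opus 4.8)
The plan is to build everything on the decomposition $\B = {\mathcal U} + {\mathcal F}$ of (\ref{E:2.30}), where ${\mathcal U}_{{\mathcal P}_{-,{\mathcal L}_0}}$ is self-adjoint for the inner product $\ll\,,\gg_M$ (Lemma \ref{Lemma:2.19}) and ${\mathcal F}$ is a bounded operator of order $0$. For assertion (1), I would take a nonzero eigenform $\psi \in \Dom\left(\B_{{\mathcal P}_{-,{\mathcal L}_0}}\right)$ with $\B\psi = \lambda\psi$ and pair with $\widetilde{T}\psi$, obtaining $\lambda\ll\psi,\psi\gg_M = \ll{\mathcal U}\psi,\psi\gg_M + \ll{\mathcal F}\psi,\psi\gg_M$. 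By Corollary \ref{Corollary:2.20} the term $\ll{\mathcal U}\psi,\psi\gg_M$ is real, and by Lemma \ref{Lemma:2.18} the number $\ll\psi,\psi\gg_M = \langle\psi,\widetilde{T}\psi\rangle_M$ is real with $\ll\psi,\psi\gg_M \ge \frac{1}{3}\parallel\psi\parallel_M^2$. Taking imaginary parts and estimating $|\ll{\mathcal F}\psi,\psi\gg_M| = |\langle{\mathcal F}\psi,\widetilde{T}\psi\rangle_M| \le \parallel{\mathcal F}\parallel\,\parallel\widetilde{T}\parallel\,\parallel\psi\parallel_M^2$ gives $|\Imm\lambda|\,\ll\psi,\psi\gg_M \le \parallel{\mathcal F}\parallel\,\parallel\widetilde{T}\parallel\,\parallel\psi\parallel_M^2$, hence $|\Imm\lambda| \le 3\parallel{\mathcal F}\parallel\,\parallel\widetilde{T}\parallel = {\mathcal N}_0$, which is (1).

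For assertion (2), I would reduce to (1). Given an eigenvalue $\mu = \lambda^2$ of $\B^2_{{\mathcal P}_{-,{\mathcal L}_0}}$ with eigenform $\psi$: if $\mu = 0$ the asserted inequality reads $0 \ge -{\mathcal N}_0^2$ and there is nothing to prove, so assume $\mu \ne 0$, hence $\lambda \ne 0$, and set $\psi_\pm = \B\psi \pm \lambda\psi$. Since $\psi \in \Dom\left(\B^2_{{\mathcal P}_{-,{\mathcal L}_0}}\right)$ forces ${\mathcal P}_{-,{\mathcal L}_0}(\psi|_Y) = {\mathcal P}_{-,{\mathcal L}_0}((\B\psi)|_Y) = 0$ by (\ref{E:2.17}), both $\psi_\pm$ lie in $\Dom\left(\B_{{\mathcal P}_{-,{\mathcal L}_0}}\right)$, and a direct computation with $\B^2\psi = \lambda^2\psi$ gives $\B\psi_\pm = \pm\lambda\,\psi_\pm$; since $\psi_+ - \psi_- = 2\lambda\psi \ne 0$, at least one of $\psi_+,\psi_-$ is nonzero, so $\lambda$ or $-\lambda$ is an eigenvalue of $\B_{{\mathcal P}_{-,{\mathcal L}_0}}$, and part (1) yields $|\Imm\lambda| \le {\mathcal N}_0$. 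Writing $\lambda = a + bi$ with $|b| \le {\mathcal N}_0$ one has $\re\mu = a^2 - b^2$ and $\Imm\mu = 2ab$, whence $\re\mu = \frac{(\Imm\mu)^2}{4b^2} - b^2 \ge \frac{1}{4{\mathcal N}_0^2}(\Imm\mu)^2 - {\mathcal N}_0^2$, the case $b = 0$ being immediate since then $\Imm\mu = 0$ and $\re\mu = a^2 \ge 0$.

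The step I would check most carefully is the passage from an eigenvalue of $\B^2_{{\mathcal P}_{-,{\mathcal L}_0}}$ to one of $\B_{{\mathcal P}_{-,{\mathcal L}_0}}$ in part (2): it rests entirely on the definition (\ref{E:2.17}) of $\Dom\left(\B^2_{{\mathcal P}_{-,{\mathcal L}_0}}\right)$, arranged precisely so that $\B$ carries it into $\Dom\left(\B_{{\mathcal P}_{-,{\mathcal L}_0}}\right)$, together with the elementary fact that $\psi_+$ and $\psi_-$ cannot both vanish when $\lambda \ne 0$. Everything else is bookkeeping with the inner product $\ll\,,\gg_M$ and the estimates of Lemmas \ref{Lemma:2.18} and \ref{Lemma:2.19} and Corollary \ref{Corollary:2.20}, so I do not expect a genuine obstacle. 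One degenerate case to dispose of is ${\mathcal F} = 0$ (so ${\mathcal N}_0 = 0$): then $\B_{{\mathcal P}_{-,{\mathcal L}_0}} = {\mathcal U}_{{\mathcal P}_{-,{\mathcal L}_0}}$ is self-adjoint for $\ll\,,\gg_M$, all its eigenvalues are real and those of $\B^2_{{\mathcal P}_{-,{\mathcal L}_0}}$ are nonnegative real, so (1) and (2) hold trivially; otherwise ${\mathcal N}_0 > 0$ and the divisions above are legitimate.
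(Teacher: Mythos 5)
Your argument is correct and follows the same route as the paper's: decompose $\B = \mathcal{U} + \mathcal{F}$, use Lemma \ref{Lemma:2.18} and Corollary \ref{Corollary:2.20} to isolate $\Imm\lambda$ in the self-adjoint-plus-bounded pairing, and then derive (2) from (1) by the elementary $\lambda = a+bi$ algebra. Your proof is actually slightly more careful than the paper's: you explicitly verify via $\psi_\pm = \B\psi \pm \lambda\psi$ and the domain definition (\ref{E:2.17}) that a nonzero eigenform of $\B^2_{{\mathcal P}_{-,{\mathcal L}_0}}$ yields an eigenvalue $\pm\lambda$ of $\B_{{\mathcal P}_{-,{\mathcal L}_0}}$, a step the paper leaves implicit when it applies the bound $|y| \le {\mathcal N}_0$ in part (2).
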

\begin{proof}
Suppose that $\B \psi = \lambda \psi$ with $\psi \in \Dom \left( \B_{{\mathcal P}_{-, {\mathcal L}_{0}}} \right)$
and $\parallel \psi \parallel_{M} = 1$.
Then from $\B = {\mathcal U} + {\mathcal F}$, we have
$$
\lambda \hspace{0.1 cm} \langle \psi, \hspace{0.1 cm} \psi \rangle_{M} \hspace{0.1 cm} = \hspace{0.1 cm}
\langle \B \psi, \hspace{0.1 cm} \psi \rangle_{M}
\hspace{0.1 cm} = \hspace{0.1 cm} \langle {\mathcal U} \psi, \hspace{0.1 cm} \psi \rangle_{M} +
\langle {\mathcal F} \psi, \hspace{0.1 cm} \psi \rangle_{M}.
$$
Since ${\mathcal U}$ is self-adjoint, we have $(\Imm \lambda)  \langle \psi, \hspace{0.1 cm} \psi \rangle_{M} =
\Imm (\langle {\mathcal F} \psi, \hspace{0.1 cm} \psi \rangle_{M})$, from which
the assertion (1) follows.
Putting $\lambda = x + i y$ with $x, y \in {\Bbb R}$, then $\lambda^{2} = (x^{2} - y^{2}) + 2xyi$.
Hence,
$$
\left( \Imm \mu \right)^{2} = \left( \Imm \lambda^{2}\right)^{2} \hspace{0.1 cm} = \hspace{0.1 cm}
4 x^{2} y^{2} \hspace{0.1 cm} = \hspace{0.1 cm} 4 y^{2} (x^{2} - y^{2} + y^{2})
\hspace{0.1 cm} = \hspace{0.1 cm} 4 y^{2} \left( \re(\lambda^{2}) + y^{2} \right)
\hspace{0.1 cm} \leq  \hspace{0.1 cm} 4 {\mathcal N}_{0}^{2} \left( \re \mu  + {\mathcal N}_{0}^{2} \right),
$$
from which the second assertion follows.
\end{proof}

\vspace{0.1 cm}

\begin{definition} \label{Definition:2.16}
The angle $\theta$ is called an Agmon angle for an elliptic operator ${\frak D}$ if : \newline
(1) Spec $\left( \sigma_{L}({\frak D}) (x, \xi) \right) \cap R_{\theta} = \emptyset$ for all $x \in M$ and $\xi \in T_{x}^{\ast}M - \{ 0 \}$,
where $R_{\theta} = \{ \rho e^{i \theta} \mid 0 < \rho < \infty \}$. \newline
(2) Spec $\left( {\frak D} \right) \cap L_{[\theta - \epsilon, \theta + \epsilon]} = \emptyset$ for some $\epsilon > 0$, where
$L_{[\theta - \epsilon, \theta + \epsilon]} =
\{ \rho e^{i \phi} \mid 0 < \rho < \infty, \hspace{0.1 cm} \theta - \epsilon \leq \phi \leq \theta + \epsilon \}$.
\end{definition}

\vspace{0.1 cm}

Theorem \ref{Theorem:2.22} shows that if $\phi$ is an angle with $- \frac{\pi}{2} < \phi < 0$,
both operators $\B_{{\mathcal P}_{-, {\mathcal L}_{0}}}$ and $\B^{2}_{{\mathcal P}_{-, {\mathcal L}_{0}}}$
have only finitely many eigenvalues in the sectors $L_{[- \frac{\pi}{2} + \phi, \phi]}$ and $L_{[-2\phi, 2\pi + 2\phi]}$.
Moreover, for each $(x, \xi)$, $\sigma_{L}(\B)(x, \xi)$ is a symmetric matrix and has real eigenvalues.
This shows that we can choose an angle $\theta$ arbitrarily close to $\phi$ so that
$\theta$ is an Agmon angle for
$\B_{{\mathcal P}_{-, {\mathcal L}_{0}}}$ and $2 \theta$ for
$\B^{2}_{{\mathcal P}_{-, {\mathcal L}_{0}}}$.

\vspace{0.3 cm}

\section{The heat kernel asymptotics of
$\Tr(e^{-t \B^{2}_{q, {\mathcal P}_{-, {\mathcal L}_{0}}}})$ and
$\Tr(\B_{\even} e^{-t \B^{2}_{\even, {\mathcal P}_{-, {\mathcal L}_{0}}}})$}

\vspace{0.2 cm}

In this section we discuss the small time asymptotic expansions of the traces of heat kernels of $e^{-t \B^{2}_{q, {\mathcal P}_{-, {\mathcal L}_{0}}}}$ and
$\B_{\even} e^{-t \B^{2}_{\even, {\mathcal P}_{-, {\mathcal L}_{0}}}}$ to compute the pole structures of the zeta and eta functions associated to $\B^{2}_{q, {\mathcal P}_{-, {\mathcal L}_{0}}}$ and $\B_{\even, {\mathcal P}_{-, {\mathcal L}_{0}}}$,
where $\B^{2}_{q, {\mathcal P}_{-, {\mathcal L}_{0}}}$ and $\B_{\even, {\mathcal P}_{-, {\mathcal L}_{0}}}$ are
the restrictions of the operators $\B^{2}_{{\mathcal P}_{-, {\mathcal L}_{0}}}$ and $\B_{{\mathcal P}_{-, {\mathcal L}_{0}}}$
to $\Omega^{q} (M, E)$ and  $\Omega^{\even} (M, E)$, respectively.
For this purpose we adopt the method of \cite{APS}
to construct parametrices for the heat kernels of $e^{-t \B^{2}_{q, {\mathcal P}_{-, {\mathcal L}_{0}}}}$ and
$\B_{\even} e^{-t \B^{2}_{\even, {\mathcal P}_{-, {\mathcal L}_{0}}}}$ by combining the heat kernels on the interior part and the heat kernels on the
collar of the boundary part. We begin with the computation of
the heat kernels on the half-infinite cylinder $Z := [0, \infty) \times Y$.

\vspace{0.2 cm}

\subsection{The heat kernels on the half-infinite cylinder}

We define the odd signature operator $\B_{\cyl}$ and its square $\B_{\cyl}^{2}$ on $Z$ by (\ref{E:2.8}) and (\ref{E:2.10}).
We denote by $\B_{\cyl, \even}$, $\B^{2}_{\cyl, q}$, $\B^{2}_{\cyl, \even}$ the restrictions $\B_{\cyl}$ and $\B^{2}_{\cyl}$
to the space of even forms or $q$-forms and
denote by $\E^{\cyl}_{q}(t, (u, y), (v, y^{\prime}))$ and $\E^{\cyl}_{\even}(t, (u, y), (v, y^{\prime}))$
the kernels of
$e^{-t \B^{2}_{\cyl, q, {\mathcal P}_{-, {\mathcal L}_{0}}}}$ and $e^{-t \B^{2}_{\cyl, \even, {\mathcal P}_{-, {\mathcal L}_{0}}}}$ on $Z$.
The boundary condition that we impose is equal to the Dirichlet condition on
$\hspace{0.1 cm} \Imm {\mathcal P}_{-, {\mathcal L}_{0}} \hspace{0.1 cm}$ and the Neumann condition on
$\hspace{0.1 cm} \Imm {\mathcal P}_{+, {\mathcal L}_{1}} \hspace{0.1 cm}$.

We denote ${\mathcal K}^{q} := {\mathcal K} \cap \Omega^{q}(Y, E|_{Y})$ and
$(\Gamma^{Y} {\mathcal K})^{q} := \Gamma^{Y} {\mathcal K} \cap \Omega^{q}(Y, E|_{Y})$
and note (cf. (\ref{E:2.14}), (\ref{E:2.15})) that for each $0 \leq q \leq m-1$,

\begin{eqnarray} \label{E:3.0}
 \Imm {\mathcal P}_{-, {\mathcal L}_{0}} \cap \Omega^{q}(M, E)|_{Y} & = &  \left( \begin{array}{clcr}
\Omega^{q}_{-}(Y, E|_{Y}) \oplus {\mathcal K}^{q} \\ \Omega^{q}_{-}(Y, E|_{Y}) \oplus {\mathcal K}^{q} \end{array} \right) , \nonumber  \\
\Imm {\mathcal P}_{+, {\mathcal L}_{1}} \cap \Omega^{q}(M, E)|_{Y} & = & \left( \begin{array}{clcr}
\Omega^{q}_{+}(Y, E|_{Y}) \oplus (\Gamma^{Y} {\mathcal K})^{q} \\ \Omega^{q}_{+}(Y, E|_{Y}) \oplus (\Gamma^{Y} {\mathcal K})^{q}
\end{array} \right).
\end{eqnarray}

\vspace{0.2 cm}

\noindent
Each component in (\ref{E:3.0}) is decomposed into the sums of the eigenspaces of $\B_{Y, q}^{2, \mp}$ and $\B_{Y, q-1}^{2, \mp}$,
where $\B_{Y, q}^{2, \mp}$ are the restriction of $\B_{Y, q}^{2}$ to $\Omega^{q}_{-}(Y, E|_{Y}) \oplus {\mathcal K}^{q}$ and
$\Omega^{q}_{+}(Y, E|_{Y}) \oplus (\Gamma^{Y} {\mathcal K})^{q}$, respectively.
For example,

\begin{equation} \label{E:3.10}
\Dom \left( \B_{Y, q}^{2, -} \right) \hspace{0.1 cm} = \hspace{0.1 cm}
\Omega^{q}_{-}(Y, E|_{Y}) \oplus {\mathcal K}^{q} \hspace{0.1 cm} = \hspace{0.1 cm}
\oplus_{k=0}^{\infty} \Lambda_{k}^{q, -}(Y, E|_{Y}),
\end{equation}

\noindent
where $\Lambda_{k}^{q, -}(Y, E|_{Y})$ is the eigenspace of $\B_{Y, q}^{2, -}$ with eigenvalue $\lambda^{-}_{q, k}$ and eigenform $\phi^{-}_{q, k}$.
The corresponding heat kernel is given by
\begin{eqnarray}  \label{E:3.13}
{\mathcal E}^{\cyl, -}_{q}(t, (u, y), (v, y^{\prime}))
& = & \sum_{\lambda^{-}_{q, k}} \frac{e^{-t \lambda^{-}_{q, k}}}{\sqrt{4 \pi t}}
\left\{ e^{- \frac{(u - v)^{2}}{4t}} - e^{- \frac{(u + v)^{2}}{4t}} \right\} \phi^{-}_{q, k} \otimes (\phi^{-}_{q, k})^{\ast}  \nonumber .
\end{eqnarray}

\noindent
We can construct the heat kernel
${\mathcal E}^{\cyl, -}_{q-1}(t, (u, y), (v, y^{\prime}))$ for $\B^{2, -}_{Y, q-1}$ in the same way.

\vspace{0.3 cm}

We next consider the case of Neumann condition. We recall that

\begin{equation}  \label{E:3.14}
\Dom \left( \B_{Y, q}^{2, +} \right) \hspace{0.1 cm} = \hspace{0.1 cm}
\Omega^{q}_{+}(Y, E|_{Y}) \oplus (\Gamma^{Y} {\mathcal K})^{q} \hspace{0.1 cm} = \hspace{0.1 cm}
\oplus_{k=0}^{\infty} \Lambda_{k}^{q, +}(Y, E|_{Y}),
\end{equation}

\noindent
where $\Lambda_{k}^{q, +}(Y, E|_{Y})$ is the eigenspace of $\B_{Y, q}^{2, +}$ with eigenvalue $\lambda^{+}_{q, k}$  and eigenform $\phi^{+}_{q, k}$.
The corresponding heat kernel $\hspace{0.1 cm} {\mathcal E}^{\cyl, +}_{q, k}(t, (u, y), (v, y^{\prime})) \hspace{0.1 cm}$
 is given by

\begin{eqnarray} \label{E:3.15}
{\mathcal E}^{\cyl, +}_{q}(t, (u, y), (v, y^{\prime}))
& = &  \sum_{\lambda^{+}_{q, k}} \frac{e^{-t \lambda^{+}_{q, k}}}{\sqrt{4 \pi t}}
\left\{ e^{- \frac{(u - v)^{2}}{4t}} + e^{- \frac{(u + v)^{2}}{4t}} \right\} \phi^{+}_{q, k} \otimes (\phi^{+}_{q, k})^{\ast}   .
\end{eqnarray}

\vspace{0.2 cm}
\noindent
We can construct the heat kernel
${\mathcal E}^{\cyl, +}_{q-1, k}(t, (u, y), (v, y^{\prime}))$ for $\B^{2, +}_{Y, q-1}$ in the same way.
Finally, the heat kernel ${\mathcal E}^{\cyl}_{q}(t, (u, y), (v, y^{\prime}))$ is given by

\begin{eqnarray}  \label{E:3.16}
{\mathcal E}^{\cyl}_{q}(t, (u, y), (v, y^{\prime})) & = & \left( {\mathcal E}^{\cyl, -}_{q}(t, (u, y), (v, y^{\prime}))
+ {\mathcal E}^{\cyl, +}_{q}(t, (u, y), (v, y^{\prime})) \right)  \left(\begin{array}{clcr} 1 & 0\\ 0 & 0 \end{array} \right)  \nonumber \\
& + & \left( {\mathcal E}^{\cyl, -}_{q-1}(t, (u, y), (v, y^{\prime}))  + {\mathcal E}^{\cyl, +}_{q-1}(t, (u, y), (v, y^{\prime})) \right)
\left(\begin{array}{clcr} 0 & 0 \\ 0 &  1 \end{array} \right).
\end{eqnarray}

\vspace{0.2 cm}

\subsection{Construction of parametrices for the heat kernels of $e^{-t \B^{2}_{q, {\mathcal P}_{-, {\mathcal L}_{0}}}}$ and
$\B_{\even} e^{-t \B^{2}_{\even, {\mathcal P}_{-, {\mathcal L}_{0}}}}$}

Let ${\widetilde M}$ be the closed double of $M$, {\it i.e.}, ${\widetilde M} = M \cup_{Y} M$.
We can extend $\B$ and $E$ on $M$ to ${\widetilde M}$, which
we denote by ${\widetilde {\B}}$ and ${\widetilde E}$.
We also denote by ${\widetilde {\B_{q}}}$, ${\widetilde {\B}}_{\even}$ the operator ${\widetilde {\B}}$ acting on the space of $q$-forms and even forms
and denote by
${\widetilde \E_{q}}(t, x, x^{\prime})$, ${\widetilde \E_{\even}}(t, x, x^{\prime})$
the kernels of
$e^{-t {\widetilde \B_{q}}^{2}}$, $e^{-t {\widetilde \B_{\even}}^{2}}$, respectively.
It is a well-known fact (cf. p.225 in \cite{BW}) that
\begin{equation} \label{E:3.444}
| {\widetilde \E_{q}}(t, x, x^{\prime}) | \leq c_{1} t^{- \frac{m}{2}} e^{- c_{2} \frac{d(x, x^{\prime})^{2}}{t}} \qquad \text{and} \qquad
| D_{x} {\widetilde \E_{q}}(t, x, x^{\prime}) | \leq c_{3} t^{- \frac{m+1}{2}} e^{- c_{4} \frac{d(x, x^{\prime})^{2}}{t}},
\end{equation}
where $c_{i}$'s are positive constants and $D$ is a differential operator of order $1$.

Recall that $N = [0, \epsilon_{0}) \times Y$ is a collar neighborhood of $Y$.
Let $\rho(a, b)$ be a smooth increasing function of real variable such that
\[ \rho(a, b) (u) = \left\{ \begin{array}{ll} 0 & \mbox{for $u \leq a$} \\
1 & \mbox{for $u \geq b$} \hspace{0.1 cm}.
\end{array} \right. \]
We put

\begin{eqnarray*}
\phi_{1} := 1 - \rho(\frac{5}{7} \epsilon_{0}, \frac{6}{7} \epsilon_{0}), \quad
\psi_{1} := 1 - \rho(\frac{3}{7} \epsilon_{0}, \frac{4}{7} \epsilon_{0}), \quad
 \phi_{2} := \rho(\frac{1}{7} \epsilon_{0}, \frac{2}{7} \epsilon_{0}), \quad  \psi_{2} := \rho(\frac{3}{7} \epsilon_{0}, \frac{4}{7} \epsilon_{0}),
\end{eqnarray*}

\noindent
and
\begin{equation} \label{E:3.3}
{\mathcal Q}_{q}(t, (u, y), (v, y^{\prime})) = \phi_{1}(u) \E^{\cyl}_{q}(t, (u, y), (v, y^{\prime})) \psi_{1}(v) +
\phi_{2}(u) {\widetilde \E}_{q}(t, (u, y), (v, y^{\prime})) \psi_{2}(v).
\end{equation}

\noindent
Then ${\mathcal Q}_{q}(t, (u, y), (v, y^{\prime}))$ is a parametrix for the kernel of $e^{-t \B^{2}_{q, {\mathcal P}_{-, {\mathcal L}_{0}}}}$.
Let $\E_{q} (t, (u, y), (v, y^{\prime}))$ be the kernel of the heat operator
 $e^{-t \B^{2}_{q, {\mathcal P}_{-, {\mathcal L}_{0}}}}$ on $M$.
Then standard computation using (\ref{E:3.13}), (\ref{E:3.15}) and (\ref{E:3.444}) shows (cf. \cite{BW}, \cite{DW}, \cite{KW}) that for $0 < t \leq 1$,

\begin{equation} \label{E:3.4}
| \E_{q} (t, (u, y), (u, y)) - {\mathcal Q}_{q} (t, (u, y), (u, y)) | \leq c_{1} e^{- \frac{c_{2}}{t}}
\end{equation}

\noindent
for some positive constants $c_{1}$ and $c_{2}$.
Similarly, we put

\begin{eqnarray} \label{E:3.5}
& & \hspace{0.5 cm} {\mathcal R}_{\even}(t, (u, y), (v, y^{\prime}))  \nonumber \\
& = &   \phi_{1}(u) \B_{\cyl, \even} \E^{\cyl}_{\even}(t, (u, y), (v, y^{\prime})) \psi_{1}(v) +
\phi_{2}(u) {\widetilde \B}_{\even} {\widetilde \E}_{\even}(t, (u, y), (v, y^{\prime})) \psi_{2}(v).
\end{eqnarray}

\noindent
Then ${\mathcal R}_{\even}(t, (u, y), (v, y^{\prime}))$ is a parametrix for
$\B_{\even} \E_{\even}(t, (u, y), (v, y^{\prime}))$, the kernel of
$\B_{\even} e^{-t \B^{2}_{\even, {\mathcal P}_{-, {\mathcal L}_{0}}}}$ on $M$ and
the standard computation shows that for $0 < t \leq 1$,

\begin{equation} \label{E:3.6}
| \B_{\even} \E_{\even}(t, (u, y), (u, y)) - {\mathcal R}_{\even}(t, (u, y), (u, y)) | \leq c_{3} e^{-\frac{c_{4}}{t}}
\end{equation}

\noindent
for some positive constants $c_{3}$ and $c_{4}$.
It is also a well-known fact that for $t \rightarrow 0^{+}$,

\begin{eqnarray}  \label{E:3.7}
\int_{M} \Tr {\widetilde \E}_{q}(t, (u, y), (u, y) ) \psi_{2}(u) \hspace{0.1 cm} d vol(M)
& \sim & \sum_{j=0}^{\infty} a_{j} t^{- \frac{m}{2}+j}, \nonumber \\
\int_{M} \Tr \left( {\widetilde \B_{\even}} {\widetilde \E}_{\even}(t, (u, y), (v, y^{\prime}) )
\psi_{2}(v) \right)|_{(v, y^{\prime}) = (u, y)} \hspace{0.1 cm}  d vol(M) & \sim &
\sum_{j=0}^{\infty} b_{j} t^{- \frac{m + 1 - j}{2}}.
\end{eqnarray}

\noindent
The pole structure of the eta function associated to
$\B_{\even, {\mathcal P}_{-, {\mathcal L}_{0}}}$ at $s=0$ is closely related to $b_{m}$ in (\ref{E:3.7}).

\vspace{0.2 cm}

\begin{lemma}  \label{Lemma:3.20}
The coefficient $b_{m}$ appearing in (\ref{E:3.7}) is equal to zero.
\end{lemma}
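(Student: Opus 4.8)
The plan is to reduce the statement to the vanishing of a single \emph{pointwise} local heat invariant on the closed odd--dimensional manifold $\widetilde M$. First I would record that, since $\widetilde M$ is closed and $\widetilde{\B}_{\even}$ is a Dirac type operator on it (the restriction of $\widetilde{\B}$ to the parity--invariant subbundle of even forms, which $\widetilde{\B}$ preserves because $\nabla$ and $\Gamma$ each reverse parity in odd dimension), the diagonal value $\Tr\big(\widetilde{\B}_{\even}\,\widetilde{\E}_{\even}(t,x,x)\big)$ has, as $t\to 0^{+}$, an asymptotic expansion $\sum_{j\ge 0}\beta_{j}(x)\,t^{-\frac{m+1-j}{2}}$ that is uniform in $x$, in which each $\beta_{j}(x)$ is a local density built polynomially from finitely many jets at $x$ of the metric $g^{M}$, the flat connections $\nabla,\nabla^{\prime}$ and the symbol of $\widetilde{\B}$. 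Integrating against $\psi_{2}$ and comparing with (\ref{E:3.7}) yields $b_{m}=\int_{M}\beta_{m}(x)\,\psi_{2}(x)\,d\,\mathrm{vol}(x)$, where $\beta_{m}(x)$ is precisely the coefficient of $t^{-1/2}$, i.e.\ the local $\eta$--density of $\widetilde{\B}_{\even}$. Hence it is enough to show $\beta_{m}(x)\equiv 0$ on $\widetilde M$.

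Second, I would establish $\beta_{m}\equiv 0$ by invariance theory. The key structural input is that the chirality operator $\Gamma$, hence $\widetilde{\B}$ itself, changes sign under reversal of the orientation of $\widetilde M$, so that $\beta_{m}(x)$ is odd with respect to orientation; on the other hand, by Gilkey's classification of such local invariants for the de Rham/signature family twisted by a flat bundle (cf.\ [16]), $\beta_{m}(x)\,d\,\mathrm{vol}(x)$ is a characteristic (Pontryagin--type) form of $T\widetilde M$ and $\widetilde E$ of total degree $\dim\widetilde M=m$; the flat bundle contributes nothing nontrivial, and a form of this kind of odd degree $m$ vanishes identically. This is exactly the fact that makes the $\eta$--invariant of the odd signature operator well defined on a closed odd--dimensional manifold, available from Braverman--Kappeler ([5], [6]); in our notation it says the local $\eta$--density of $\widetilde{\B}_{\even}$ vanishes pointwise. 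As a consistency check: since $\Gamma$ is an order $0$ bundle automorphism with $\Gamma\,\widetilde{\B}_{\even}=\widetilde{\B}_{\odd}\,\Gamma$ on $\widetilde M$ and $\Gamma$ interchanges even and odd forms in odd dimension, the pointwise densities for $\widetilde{\B}_{\even}$ and $\widetilde{\B}_{\odd}$ coincide, so $\beta_{m}(x)$ is half the analogous coefficient for the full operator $\widetilde{\B}$ on $\Omega^{\bullet}(\widetilde M,\widetilde E)$, which vanishes for the same reason.

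The delicate point is the second step: one needs $\beta_{m}(x)$ to vanish \emph{pointwise}, not merely after integration over $\widetilde M$, and this does not follow from any elementary symmetry of $\B$ --- indeed $\B_{\even}$ is not conjugate to $-\B_{\even}$ by a degree--preserving bundle map, consistently with the $\eta$--invariant of the odd signature operator being a genuinely nontrivial invariant. So the argument really goes through the invariance--theoretic description of the local $\eta$--density together with the odd dimensionality of $\widetilde M$. Granting $\beta_{m}\equiv 0$, the conclusion $b_{m}=\int_{M}\beta_{m}\,\psi_{2}\,d\,\mathrm{vol}=0$ is immediate. One could alternatively verify $\beta_{m}\equiv 0$ by a direct computation of the relevant symbol term via the Seeley resolvent parametrix, but this is considerably longer and less transparent than the invariance--theoretic route.
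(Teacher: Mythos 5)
Your overall strategy is the same as the paper's: reduce to a pointwise statement about the local $\eta$--density on the closed double $\widetilde M$ and then argue that this local invariant vanishes identically. However, the mechanism you use to obtain the pointwise vanishing is less precise than the paper's in two ways. First, the paper does not invoke Gilkey's classification directly for $\widetilde{\B}_{\even}$. Instead it first observes that for the \emph{trivial} connection on $\widetilde M \times \mathbb{C}$, the resulting operator $\widetilde{\B}^{\trivial}_{\even}$ is a compatible Dirac operator, for which the pointwise vanishing of the $t^{-1/2}$ coefficient is exactly Theorem 3.2 of Branson--Gilkey [4]; it then notes that because a flat connection is locally gauge--equivalent to the trivial one, the operators $\widetilde{\B}^{\trivial}_{\even}\,Id_n$ and $\widetilde{\B}_{\even}$ are \emph{locally} identical, and the local density is invariant under such a conjugation, so $b_m(x)=\widetilde b_m(x)=0$. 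Your phrase ``the flat bundle contributes nothing nontrivial'' is gesturing at this, but it leaves the crucial step (how the non--Hermitian flat $\nabla$ is eliminated so that the compatible--Dirac invariance theory applies) implicit; without the local--triviality reduction, the standard invariance theory does not directly apply to $\B=\nabla\Gamma+\Gamma\nabla$ with a non--unitary $\nabla$.

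Second, your attribution is off in a way worth flagging: the fact you cite from Braverman--Kappeler ([5], [6]) — regularity of the eta function of the odd signature operator at $s=0$ on a closed manifold — only requires the \emph{integrated} $t^{-1/2}$ coefficient to vanish, whereas here (because of the cutoff $\psi_2$) one genuinely needs the pointwise statement, which is Branson--Gilkey [4], Theorem 3.2. You correctly identify that pointwise vanishing is the delicate point, so the mismatch in references is not a logical error in the argument, but the cited source does not actually supply what you need. With the reference corrected and the trivial--connection/local--gauge--equivalence step made explicit, your proof and the paper's coincide.
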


\begin{proof}
Let ${\widetilde \B}^{\trivial}_{\even} : \Omega^{\even} ({\widetilde M}, {\Bbb C}) \rightarrow \Omega^{\even}({\widetilde M}, {\Bbb C})$
be the odd signature operator on ${\widetilde M}$
obtained by the trivial connection on the trivial line bundle ${\widetilde M} \times {\Bbb C} \rightarrow {\widetilde M}$.
We consider the asymptotic expansion of the trace of the heat kernel

\begin{eqnarray*}
\Tr \left(  {\widetilde \B}^{\trivial}_{\even} e^{-t ({\widetilde \B}^{\trivial}_{\even})^{2}} \right) & \sim &
\sum_{j=1}^{\infty} {\widetilde b}_{j} t^{- \frac{m + 1 - j}{2}},
\end{eqnarray*}

\noindent
where ${\widetilde b}_{j} = \int_{{\widetilde M}} {\widetilde b}_{j}(x) d vol({\widetilde M})$ for some local invariant ${\widetilde b}_{j}(x)$.
Since ${\widetilde \B}^{\trivial}_{\even}$ is a compatible Dirac operator, the local invariant ${\widetilde b}_{m}(x) = 0$,
which was shown in Theorem 3.2 in \cite{BG}.
The coefficient $b_{m}$ in (\ref{E:3.7}) is given by $b_{m} = \int_{M} b_{m}(x) \psi_{2}(x) d vol(M)$ for some local invariant $b_{m}(x)$
on ${\widetilde M}$. Let $\Id_{n}$ be the $n \times n$ identity matrix, where $n = \text{rank} (E)$.
Since ${\widetilde \B}^{\trivial}_{\even} \Id_{n}$ and ${\widetilde \B}_{\even}$ are locally same operators, $n {\widetilde b}_{m}(x) = b_{m}(x) = 0$,
from which the result follows.
\end{proof}

\subsection{Asymptotics of $\Tr(e^{-t \B^{2}_{q, {\mathcal P}_{-, {\mathcal L}_{0}}}})$ and
$\Tr(\B_{\even} e^{-t \B^{2}_{\even, {\mathcal P}_{-, {\mathcal L}_{0}}}})$ for $t \rightarrow 0^{+}$}

From (\ref{E:3.10}) - (\ref{E:3.16}), we have

\begin{eqnarray}  \label{E:3.17}
 & &\int_{0}^{\infty} \int_{Y} \Tr \hspace{0.1 cm} \E^{\cyl}_{q}(t, (u, y), (u, y)) \psi_{1}(u) \hspace{0.1 cm} dvol(Y) du   \\
 & = & \sum_{\lambda^{-}_{q, k}} \frac{e^{-t \lambda^{-}_{q, k}}}{\sqrt{4 \pi t}} \hspace{0.1 cm}
\int_{0}^{\infty} ( 1 - e^{- \frac{u^{2}}{t}} ) \psi_{1}(u) \hspace{0.1 cm} du \hspace{0.1 cm}
+ \hspace{0.1 cm} \sum_{\lambda^{+}_{q, k}} \frac{e^{-t \lambda^{+}_{q, k}}}{\sqrt{4 \pi t}}
\int_{0}^{\infty} ( 1 + e^{- \frac{u^{2}}{t}} ) \psi_{1}(u) \hspace{0.1 cm} du   \nonumber \\
& + & \sum_{\lambda^{-}_{q-1, k}} \frac{ e^{-t \lambda^{-}_{q-1, k}}}{\sqrt{4 \pi t}} \hspace{0.1 cm}
\int_{0}^{\infty} ( 1 - e^{- \frac{u^{2}}{t}} ) \psi_{1}(u) \hspace{0.1 cm} du \hspace{0.1 cm}
+ \hspace{0.1 cm} \sum_{\lambda^{+}_{q-1, k}} \frac{e^{-t \lambda^{+}_{q-1, k}}}{\sqrt{4 \pi t}} \hspace{0.1 cm}
\int_{0}^{\infty} ( 1 + e^{- \frac{u^{2}}{t}} ) \psi_{1}(u) \hspace{0.1 cm} du. \nonumber
\end{eqnarray}

\noindent
It is a well-known fact that for $t \rightarrow 0^{+}$ the following sums
$$
\sum_{\lambda^{-}_{q, k}} e^{-t \lambda^{-}_{q, k}}, \qquad \sum_{\lambda^{+}_{q, k}}  e^{-t \lambda^{+}_{q, k}}, \qquad
\sum_{\lambda^{-}_{q-1, k}} e^{-t \lambda^{-}_{q-1, k}}, \qquad \sum_{\lambda^{+}_{q-1, k}}  e^{-t \lambda^{+}_{q-1, k}},
$$
have asymptotic expansions of the type

\begin{equation} \label{E:3.18}
\sum_{j=0}^{\infty} c_{j} t^{- \frac{m-1}{2} + j}.
\end{equation}

\noindent
The equalities (\ref{E:3.4}), (\ref{E:3.7}), (\ref{E:3.17}) and (\ref{E:3.18}) with the fact
$\hspace{0.1 cm} \int_{0}^{\infty} e^{- \frac{u^{2}}{t}} \psi_{1}(u) du = \frac{\sqrt{\pi t}}{2} + O(e^{- \frac{c}{t}}) \hspace{0.1 cm}$
show that for $t \rightarrow 0^{+}$, $\hspace{0.1 cm} \Tr \left( e^{-t \B^{2}_{q, {\mathcal P}_{-, {\mathcal L}_{0}}}} \right)$ has an asymptotic expansion
of the following type

\begin{eqnarray}  \label{E:3.20}
\Tr \left( e^{-t \B^{2}_{q, {\mathcal P}_{-, {\mathcal L}_{0}}}} \right) & \sim &
\sum_{j=0}^{\infty} {\widetilde c}_{j} t^{- \frac{m-j}{2}}.
\end{eqnarray}

\vspace{0.3 true cm}

We next discuss the asymptotic expansion, for $t \rightarrow 0^{+}$, of
$$
\Tr \left( \phi_{1}(u) \B_{\cyl, \even} \E^{\cyl}_{\even}(t, (u, y), (v, y^{\prime})) \psi_{1}(v) \right)|_{u=v, y=y^{\prime}}.
$$


\noindent
From (\ref{E:2.8}) and (\ref{E:3.16}), we have
\begin{equation}  \label{E:3.21}
\Tr \left\{ \phi_{1}(u) \left( - i \beta \right) \left( \begin{array}{clcr}  0 & -1
\\ -1 & 0 \end{array} \right) \left( \Gamma^{Y} \nabla^{Y}  + \nabla^{Y} \Gamma^{Y} \right)
\left( \E^{\cyl}(t, (u, y), (v, y^{\prime})) \psi_{1}(v) \right)|_{u=v, y=y^{\prime}} \right\} = 0.
\end{equation}

\noindent
Since $\Gamma^{Y}$ maps $(\Imm {\mathcal P}_{-, {\mathcal L}_{0}})$ onto $(\Imm {\mathcal P}_{+, {\mathcal L}_{1}})$
and vice versa, we have

\begin{equation}  \label{E:3.22}
\Tr \left\{ \phi_{1}(u) \left( - i \beta \right) \Gamma^{Y} \left( \begin{array}{clcr} 1 & 0 \\ 0 & 1 \end{array} \right) \nabla_{\partial_{u}}
\left( \E^{\cyl}(t, (u, y), (v, y^{\prime})) \psi_{1}(v) \right) \right\}_{u=v, y=y^{\prime}} = 0.
\end{equation}

\noindent
Hence, (\ref{E:3.21}) and (\ref{E:3.22}) yield

\begin{equation}  \label{E:3.23}
\Tr \left( \phi_{1}(u) \B_{\cyl, \even} \E^{\cyl}_{\even}(t, (u, y), (v, y^{\prime})) \psi_{1}(v) \right)|_{u=v, y=y^{\prime}} = 0,
\end{equation}

\noindent
which together with (\ref{E:3.7}) and Lemma \ref{Lemma:3.20} shows that

\begin{equation} \label{E:3.24}
\Tr \left( \B_{\even} \E_{\even}(t, (u, y), (u, y)) \right) \sim \sum_{j=0}^{\infty} b_{j} t^{- \frac{m + 1 - j}{2}} \quad
\text{with} \quad b_{m} = 0.
\end{equation}

\vspace{0.2 cm}

\subsection{The regularities of the zeta and eta functions at $s=0$}

For any angle $\phi$ with $- \frac{\pi}{2} < \phi < 0$ Theorem \ref{Theorem:2.22} shows that we can choose an angle $\theta$
arbitrarily close to $\phi$ so that $\theta$ is an Agmon angle for $\B_{\even, {\mathcal P}_{-, {\mathcal L}_{0}}}$ and
$2 \theta$ is an Agmon angle for $\B^{2}_{\even, {\mathcal P}_{-, {\mathcal L}_{0}}}$. In this subsection we fix an angle $\theta$
satisfying this property.
Moreover,
Theorem \ref{Theorem:2.22} shows that all the generalized eigenvalues of $\B^{2}_{q, {\mathcal P}_{-, {\mathcal L}_{0}}}$
except only finitely many ones have positive real parts.
Let $\hspace{0.1 cm} \tau_{1}, \cdots, \tau_{l} \hspace{0.1 cm}$ and $\hspace{0.1 cm} \lambda_{1}, \lambda_{2}, \cdots \hspace{0.1 cm}$ be
generalized non-zero eigenvalues of $\B^{2}_{q, {\mathcal P}_{-, {\mathcal L}_{0}}}$ counted with their multiplicities,
where $\hspace{0.1 cm} \re \tau_{j} \leq 0 \hspace{0.1 cm}$ and
$\hspace{0.1 cm} \re \lambda_{j} > 0$. We define the zeta function $\zeta_{\B^{2}_{q, {\mathcal P}_{-, {\mathcal L}_{0}}}} (s)$ by

\begin{equation} \label{E:3.1}
\zeta_{\B^{2}_{q, {\mathcal P}_{-, {\mathcal L}_{0}}}} (s) \hspace{0.1 cm} = \hspace{0.1 cm}
\sum_{j=1}^{l} \tau_{j}^{-s} \hspace{0.1 cm} + \hspace{0.1 cm} \sum_{j=1}^{\infty} \lambda_{j}^{-s}.
\end{equation}

\noindent
Similarly, let $\hspace{0.1 cm} i r_{1}, \cdots, i r_{k}$, $\kappa_{1}, \cdots \kappa_{l_{1}},
\kappa_{l_{1}+1}, \cdots, \kappa_{l_{1}+l_{2}} \hspace{0.1 cm}$ and
$\hspace{0.1 cm} \mu_{1}, \mu_{2}, \cdots \hspace{0.1 cm}$ be generalized non-zero eigenvalues of
$\hspace{0.1 cm}\B_{\even, {\mathcal P}_{-, {\mathcal L}_{0}}}$  counted with their multiplicities, where
$\hspace{0.1 cm} r_{j} \in {\Bbb R}$, $\hspace{0.2 cm}\kappa_{j} \in {\Bbb C} - i {\Bbb R} \hspace{0.1 cm}$ with
$\hspace{0.1 cm} \re \kappa_{j}^{2} \leq 0 \hspace{0.1 cm}$ and $\hspace{0.1 cm} \re \kappa_{j} > 0 \hspace{0.1 cm}$ for $1 \leq j \leq l_{1}$,
$\re \kappa_{j} < 0 \hspace{0.1 cm}$ for $l_{1}+1 \leq j \leq l_{1} + l_{2}$,
and $\mu_{j} \in {\Bbb C} - i {\Bbb R} \hspace{0.1 cm}$ with $\hspace{0.1 cm} \re \mu_{j}^{2} > 0$.
We define the eta function $\eta_{\B_{\even, {\mathcal P}_{-, {\mathcal L}_{0}}}} (s)$ by

\begin{equation} \label{3.2}
\eta_{\B_{\even, {\mathcal P}_{-, {\mathcal L}_{0}}}} (s) \hspace{0.1 cm} = \hspace{0.1 cm}
\sum_{j=1}^{l_{1}} \kappa_{j}^{-s} - \sum_{j=l_{1}+1}^{l_{1}+l_{2}} (-\kappa_{j})^{-s}
+ \sum_{\re \mu_{j} > 0} \mu_{j}^{-s} - \sum_{\re \mu_{j}<0} (- \mu_{j})^{-s}.
\end{equation}

\noindent
The asymptotic expansions (\ref{E:3.20}) and (\ref{E:3.24})  imply that for some constants $C_{1}$, $C_{2}$ and $\delta > 0$,
\begin{eqnarray*}
\mid \Tr e^{- t \B^{2}_{q, {\mathcal P}_{-, {\mathcal L}_{0}}}} - \sum_{j=1}^{l} e^{- \tau_{j} t} \mid & \leq &
C_{1} \hspace{0.1 cm}  t^{- \frac{m}{2}} e^{- \delta t},  \\
\mid \Tr \left( \B_{\even} e^{- t\B^{2}_{\even, {\mathcal P}_{-, {\mathcal L}_{0}}}} \right) - i \sum_{j=1}^{k} r_{j} e^{r_{j}^{2}t} -
\sum_{j=1}^{l_{1}+l_{2}} \kappa_{j} e^{-t \kappa_{j}^{2}} \mid  & \leq & C_{2} \hspace{0.1 cm}  t^{- \frac{m+1}{2}} e^{- \delta t},
\end{eqnarray*}
which shows (cf. Lemma 2.3 and Lemma 2.4 in \cite{GS}) that
\begin{eqnarray} \label{E:3.30}
H_{1}(s) & = & \frac{1}{\Gamma(s)} \int_{0}^{\infty} t^{s-1} \left( \Tr e^{- t \B^{2}_{q, {\mathcal P}_{-, {\mathcal L}_{0}}}} -
\sum_{j=1}^{l} e^{- \tau_{j} t} \right) dt,  \nonumber \\
H_{2}(s) & = & \frac{1}{\Gamma(\frac{s+1}{2})} \int_{0}^{\infty} t^{\frac{s-1}{2}} \left(
\Tr \left( \B_{\even} e^{- t\B^{2}_{\even, {\mathcal P}_{-, {\mathcal L}_{0}}}} \right) - i \sum_{j=1}^{k} r_{j} e^{r_{j}^{2}t} -
\sum_{j=1}^{l_{1}+l_{2}} \kappa_{j} e^{-t \kappa_{j}^{2}} \right) dt
\end{eqnarray}
are holomorphic functions for $\re s \gg 0$. We note that
\begin{eqnarray} \label{E:3.31}
\zeta_{\B^{2}_{q, {\mathcal P}_{-, {\mathcal L}_{0}}}} (s) & = & \sum_{j=1}^{l} \tau_{j}^{-s} + H_{1}(s), \nonumber  \\
\eta_{\B_{\even, {\mathcal P}_{-, {\mathcal L}_{0}}}} (s) & = & \sum_{j=1}^{l_{1}} \kappa_{j}^{-s} - \sum_{j= l_{1}+1}^{l_{1}+l_{2}} (- \kappa_{j})^{-s}
+ H_{2}(s).
\end{eqnarray}

\noindent
Then, (\ref{E:3.20}) and (\ref{E:3.24}) imply that $\zeta_{\B^{2}_{q, {\mathcal P}_{-, {\mathcal L}_{0}}}}(s)$ and
$\eta_{\B_{\even, {\mathcal P}_{-, {\mathcal L}_{0}}}}(s)$
have regular values at $s=0$.
Similarly, $\zeta_{\B^{2}_{q, {\mathcal P}_{+, {\mathcal L}_{1}}}}(s)$
and $\eta_{\B_{\even, {\mathcal P}_{+, {\mathcal L}_{1}}}}(s)$ have regular values at $s=0$. We summarize the above arguments as follows.

\begin{lemma} \label{Lemma:3.1}
Let $(M, Y, g^{M})$ be a compact oriented Riemannian manifold with boundary $Y$ and $E \rightarrow M$ be a complex flat vector
bundle satisfying the Assumption A. We assume that $g^{M}$ is a product metric near $Y$.
Then :  \newline
(1) For each $q$, $\zeta_{\B^{2}_{q, {\mathcal P}_{-, {\mathcal L}_{0}}}}(s)$ and
$\zeta_{\B^{2}_{q, {\mathcal P}_{+, {\mathcal L}_{1}}}}(s)$ have regular values
at $s=0$ and hence the zeta-determinants $\Det \left( \B^{2}_{q, {\mathcal P}_{-, {\mathcal L}_{0}}} \right)$ and
$\Det \left( \B^{2}_{q, {\mathcal P}_{+, {\mathcal L}_{1}}} \right)$ are well-defined.  \newline
(2) $\eta_{\B_{\even, {\mathcal P}_{-, {\mathcal L}_{0}}}}(s)$ and
$\eta_{\B_{\even, {\mathcal P}_{+, {\mathcal L}_{1}}}}(s)$ have regular values at $s=0$.  \newline
(3) The zeta-determinants $\Det \left(\B_{\even, {\mathcal P}_{-, {\mathcal L}_{0}}} \right)$ and
$\Det \left(\B_{\even, {\mathcal P}_{+, {\mathcal L}_{1}}} \right)$ are well-defined.
\end{lemma}

\vspace{0.2 cm}

\noindent
For later use we define the eta invariant $\eta(\B_{even, {\mathcal P}_{-, {\mathcal L}_{0}}})$ for
$\B_{even, {\mathcal P}_{-, {\mathcal L}_{0}}}$ as follows.

\begin{definition} \label{Definition:3.51}
Let ${\frak L}^{+}_{{\mathcal P}_{-, {\mathcal L}_{0}}}$ (${\frak L}^{-}_{{\mathcal P}_{-, {\mathcal L}_{0}}}$)
and ${\frak L}^{0}_{{\mathcal P}_{-, {\mathcal L}_{0}}}$
be the dimensions of the generalized eigenspaces corresponding to positive (negative) imaginary generalized eigenvalues and zero generalized eigenvalue
of $\B_{even, {\mathcal P}_{-, {\mathcal L}_{0}}}$, respectively.
We define $\eta(\B_{even, {\mathcal P}_{-, {\mathcal L}_{0}}})$ by

$$
\eta(\B_{even, {\mathcal P}_{-, {\mathcal L}_{0}}}) = \frac{1}{2} \left( \eta_{\B_{even, {\mathcal P}_{-, {\mathcal L}_{0}}}}(0)
+ {\frak L}^{+}_{{\mathcal P}_{-, {\mathcal L}_{0}}} - {\frak L}^{-}_{{\mathcal P}_{-, {\mathcal L}_{0}}} +
{\frak L}^{0}_{{\mathcal P}_{-, {\mathcal L}_{0}}}
\right).
$$

\noindent
We define $\eta(\B_{even, {\mathcal P}_{+, {\mathcal L}_{1}}})$ in the same way.
\end{definition}

\vspace{0.3 cm}

\subsection{The value of zeta functions at $s=0$}

Before finishing this section we compute the values of the zeta functions
$\zeta_{\B^{2}_{q, {\mathcal P}_{-, {\mathcal L}_{0}}/ {\mathcal P}_{+, {\mathcal L}_{1}}}}(s)$ at $s=0$ for later use.
Let ${\frak L}^{0, q}_{{\mathcal P}_{-, {\mathcal L}_{0}}}$ and ${\frak L}^{0, q}_{{\mathcal P}_{+, {\mathcal L}_{1}}}$ be
the dimensions of the generalized
$0$-eigenspaces of $\B^{2}_{q, {\mathcal P}_{-, {\mathcal L}_{0}}}$ and $\B^{2}_{q, {\mathcal P}_{+, {\mathcal L}_{1}}}$, respectively.
Then by (\ref{E:3.3}) and (\ref{E:3.4}) we have

\begin{eqnarray}  \label{E:3.25}
\zeta_{\B^{2}_{q, {\mathcal P}_{-, {\mathcal L}_{0}}}}(0) & + & {\frak L}^{0, q}_{{\mathcal P}_{-, {\mathcal L}_{0}}}
\hspace{0.1 cm} = \hspace{0.1 cm}
\lim_{s \rightarrow 0} \frac{1}{\Gamma(s)} \int_{0}^{1} t^{s-1} \Tr e^{-t \B^{2}_{q, {\mathcal P}_{-, {\mathcal L}_{0}}}} \hspace{0.1 cm} dt \nonumber \\
& = & \lim_{s \rightarrow 0} \frac{1}{\Gamma(s)} \int_{0}^{1} t^{s-1} \int_{M} \Tr {\mathcal Q}_{q}(t, x, x) \hspace{0.1 cm} d vol(x) dt  \nonumber \\
& = & \lim_{s \rightarrow 0} \frac{1}{\Gamma(s)} \int_{0}^{1} t^{s-1} \int_{Y \times [0, \infty)}
\Tr {\mathcal E}^{\cyl}_{q}(t, (u, y), (u, y)) \psi_{1}(u) \hspace{0.1 cm} d vol(y) du dt,
\end{eqnarray}

\noindent
where in the last equality we used the fact that $m = \Dim {\widetilde M}$ is odd.
Moreover, (\ref{E:3.17}) shows that

\begin{eqnarray*}
& & \lim_{s \rightarrow 0} \frac{1}{\Gamma(s)} \int_{0}^{1} t^{s-1}
\left( \sum_{\lambda^{-}_{q, k}} \frac{e^{- t \lambda^{-}_{q, k}}}{\sqrt{4 \pi t}} \int_{0}^{\infty} \psi_{1}(u) du +
\sum_{\lambda^{+}_{q, k}} \frac{e^{- t \lambda^{+}_{q, k}}}{\sqrt{4 \pi t}} \int_{0}^{\infty} \psi_{1}(u) du \right) dt  \\
& = & \left( \lim_{s \rightarrow 0} \frac{1}{\Gamma(s)} \int_{0}^{1} t^{s-\frac{3}{2}} \left( \Tr e^{- t \B_{Y, q}^{2}} \right) dt \right)
\left( \frac{1}{\sqrt{4 \pi}} \int_{0}^{\infty} \psi_{1}(u) du \right) \hspace{0.1 cm} = \hspace{0.1 cm} 0,
\end{eqnarray*}

\noindent
since $\hspace{0.1 cm} \Tr e^{-t \B_{Y, q}^{2}} \hspace{0.1 cm}$ has an asymptotic expansion of the form
$\hspace{0.1 cm} \sum_{j=0}^{\infty} a_{j} t^{- \frac{m-1}{2} + j}$.
Similarly, we have
$$
\lim_{s \rightarrow 0} \frac{1}{\Gamma(s)} \int_{0}^{1} t^{s-1}
\left( \sum_{\lambda^{-}_{q-1, k}} \frac{e^{- t \lambda^{-}_{q-1, k}}}{\sqrt{4 \pi t}} \int_{0}^{\infty} \psi_{1}(u) du +
\sum_{\lambda^{+}_{q-1, k}} \frac{e^{- t \lambda^{+}_{q-1, k}}}{\sqrt{4 \pi t}} \int_{0}^{\infty} \psi_{1}(u) du \right) dt
= 0.
$$
These two equalities with (\ref{E:3.17}) and (\ref{E:3.25}) lead to

\begin{eqnarray}  \label{E:3.26}
& & \zeta_{\B^{2}_{q, {\mathcal P}_{-, {\mathcal L}_{0}}}}(0) + {\frak L}^{0, q}_{{\mathcal P}_{-, {\mathcal L}_{0}}} \\
& = &  \lim_{s \rightarrow 0} \frac{1}{\Gamma(s)}
 \int_{0}^{1} t^{s-1} \Tr \left( e^{- t \B_{Y, q}^{2, +}} -  e^{- t \B_{Y, q}^{2, -}}
+  e^{- t \B_{Y, q-1}^{2, +}} -  e^{- t \B_{Y, q-1}^{2, -}}  \right)
\left( \frac{1}{\sqrt{4 \pi t}} \int_{0}^{\infty} e^{- \frac{u^{2}}{t}} \psi_{1}(u) du \right) dt. \nonumber
\end{eqnarray}

\noindent
We denote
\begin{equation} \label{E:3.270}
l_{q} := \Dim \Lambda^{q}_{0}(Y, E|_{Y}), \qquad
l_{q}^{-} := \Dim {\mathcal K}^{q}, \qquad
l_{q}^{+} := \Dim ( \Gamma^{Y} {\mathcal K} )^{q},
\end{equation}
so that $l_{q} = l_{q}^{-} + l_{q}^{+}$.
Using the fact $\hspace{0.1 cm} \zeta_{\B_{Y, q}^{2, -}}(s) = \zeta_{\B_{Y, q-1}^{2, +}}(s) \hspace{0.1 cm}$ (cf. (\ref{E:3.10}), (\ref{E:3.14}))
and (\ref{E:3.26}), we have

\begin{eqnarray}  \label{E:3.27}
& & \zeta_{\B^{2}_{q, {\mathcal P}_{-, {\mathcal L}_{0}}}}(0) + {\frak L}^{0, q}_{{\mathcal P}_{-, {\mathcal L}_{0}}} \nonumber \\
& = & \frac{1}{4} \left(
\zeta_{\B_{Y, q}^{2, +}}(0) + l_{q}^{+} - \zeta_{\B_{Y, q}^{2, -}}(0) - l_{q}^{-}
+ \zeta_{\B_{Y, q-1}^{2, +}}(0) + l_{q-1}^{+} - \zeta_{\B_{Y, q-1}^{2, -}}(0) - l_{q-1}^{-} \right)  \nonumber \\
& = & \frac{1}{4} \left( \zeta_{\B_{Y, q}^{2, +}}(0) - \zeta_{\B_{Y, q-1}^{2, -}}(0) + l_{q}^{+} - l_{q}^{-} + l_{q-1}^{+}  - l_{q-1}^{-} \right).
\end{eqnarray}

\noindent
Similarly, we have

\begin{eqnarray}   \label{E:3.28}
\zeta_{\B^{2}_{q, {\mathcal P}_{+, {\mathcal L}_{1}}}}(0) + {\frak L}^{0, q}_{{\mathcal P}_{+, {\mathcal L}_{1}}} & = &
\frac{1}{4} \left( - \zeta_{\B_{Y, q}^{2, +}}(0)  + \zeta_{\B_{Y, q-1}^{2, -}}(0) - l_{q}^{+} + l_{q}^{-} - l_{q-1}^{+}  + l_{q-1}^{-} \right).
\end{eqnarray}

\noindent
Summarizing the above argument, we have the following result.

\vspace{0.2 cm}

\begin{lemma}  \label{Lemma:3.2}
\begin{eqnarray*}
\zeta_{\B^{2}_{q, {\mathcal P}_{-, {\mathcal L}_{0}}}}(0) + {\frak L}^{0, q}_{{\mathcal P}_{-, {\mathcal L}_{0}}}
& = & \frac{1}{4} \left( \zeta_{\B_{Y, q}^{2, +}}(0) - \zeta_{\B_{Y, q-1}^{2, -}}(0) + l_{q}^{+} - l_{q}^{-} + l_{q-1}^{+}  - l_{q-1}^{-} \right)  \\
& = & - \left( \zeta_{\B^{2}_{q, {\mathcal P}_{+, {\mathcal L}_{1}}}}(0) + {\frak L}^{0, q}_{{\mathcal P}_{+, {\mathcal L}_{1}}} \right).
\end{eqnarray*}
\end{lemma}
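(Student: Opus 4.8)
The plan is to assemble the two displayed equalities from the computation recorded in (\ref{E:3.25})--(\ref{E:3.28}), spelling out the three ingredients that make it work. First I would use the parametrix ${\mathcal Q}_{q}$ of (\ref{E:3.3}) together with the exponentially small error bound (\ref{E:3.4}) to reduce
\[
\zeta_{\B^{2}_{q, {\mathcal P}_{-, {\mathcal L}_{0}}}}(0) + k_{q, {\mathcal P}_{-, {\mathcal L}_{0}}}
\;=\; \lim_{s \to 0} \frac{1}{\Gamma(s)} \int_{0}^{1} t^{s-1} \int_{M} {\mathcal Q}_{q}(t, x, x)\, d\mathrm{vol}(x)\, dt
\]
to the two summands of ${\mathcal Q}_{q}$. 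The interior summand $\phi_{2}\,{\widetilde \E}_{q}\,\psi_{2}$ contributes nothing at $s=0$: by (\ref{E:3.7}) its small-$t$ expansion has vanishing $t^{-1/2}$-coefficient because $a_{j}=0$ for $j$ odd. Hence only the cylinder summand $\int_{Y \times [0,\infty)} \E^{\cyl}_{q}(t,(u,y),(u,y))\,\psi_{1}(u)\, d\mathrm{vol}(y)\,du$ survives, which is exactly (\ref{E:3.25}).

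Next I would insert the explicit cylinder trace (\ref{E:3.17}) and split each factor $(1 \mp e^{-u^{2}/t})$ into a constant part and an image-charge part. The constant parts reassemble, up to the harmless factor $\frac{1}{\sqrt{4\pi t}}\int_{0}^{\infty}\psi_{1}(u)\,du$, into $\Tr e^{-t\B_{Y,q}^{2}} + \Tr e^{-t\B_{Y,q-1}^{2}}$, whose expansion is of the form $\sum_{j} a_{j}\, t^{-(m-1)/2+j}$ and so again carries no $t^{-1/2}$-term, contributing $0$ after $\lim_{s\to 0}\frac{1}{\Gamma(s)}\int_{0}^{1}t^{s-1}(\cdot)\,dt$. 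The image-charge parts, via (\ref{E:3.19}) in the form $\frac{1}{\sqrt{4\pi t}}\int_{0}^{\infty}e^{-u^{2}/t}\psi_{1}(u)\,du = \frac14 + O(e^{-c/t})$, yield precisely (\ref{E:3.26}),
\[
\zeta_{\B^{2}_{q, {\mathcal P}_{-, {\mathcal L}_{0}}}}(0) + k_{q, {\mathcal P}_{-, {\mathcal L}_{0}}}
= \frac14 \lim_{s \to 0} \frac{1}{\Gamma(s)} \int_{0}^{1} t^{s-1}\,
\Tr\!\left( e^{-t \B_{Y,q}^{2,+}} - e^{-t \B_{Y,q}^{2,-}} + e^{-t \B_{Y,q-1}^{2,+}} - e^{-t \B_{Y,q-1}^{2,-}} \right) dt .
\]

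To evaluate the four terms I would invoke the standard identity $\lim_{s\to 0}\frac{1}{\Gamma(s)}\int_{0}^{1}t^{s-1}\Tr e^{-tA}\,dt = \zeta_{A}(0) + \dim(\text{generalized }0\text{-eigenspace of }A)$, and the dimension count supplied by Assumption A and Corollary \ref{Corollary:2.10}: on $\Omega^{q}(Y, E|_{Y})$ one has $\ker \B_{Y,q}^{2,-} = {\mathcal K} \cap \Omega^{q}(Y, E|_{Y})$ and $\ker \B_{Y,q}^{2,+} = \Gamma^{Y}{\mathcal K} \cap \Omega^{q}(Y, E|_{Y})$, of dimensions $l_{q}^{-}$ and $l_{q}^{+}$, with the analogous statement in degree $q-1$. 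This converts the right-hand side into the combination of $\zeta(0)$'s and $l_{q}^{\pm},\, l_{q-1}^{\pm}$ appearing in (\ref{E:3.27}); the two middle $\zeta(0)$-terms cancel because $\zeta_{\B_{Y,q}^{2,-}}(s) = \zeta_{\B_{Y,q-1}^{2,+}}(s)$, which I would justify by noting that $\nabla^{Y}$ restricts to an isomorphism $\Imm \Gamma^{Y}\nabla^{Y}\Gamma^{Y} \cap \Omega^{q-1}(Y,E|_{Y}) \xrightarrow{\ \sim\ } \Imm \nabla^{Y} \cap \Omega^{q}(Y,E|_{Y})$ intertwining $\B_{Y}^{2}$ (surjectivity and injectivity both use Assumption A), while the Lagrangian harmonic summands ${\mathcal K}$, $\Gamma^{Y}{\mathcal K}$ carry only the zero eigenvalue and hence do not enter the zeta functions. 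This gives the first displayed equality of the lemma.

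For the second equality I would rerun the argument verbatim with ${\mathcal P}_{+, {\mathcal L}_{1}}$ in place of ${\mathcal P}_{-, {\mathcal L}_{0}}$: now the Dirichlet condition sits on $\Imm {\mathcal P}_{+, {\mathcal L}_{1}}$ and the Neumann condition on $\Imm {\mathcal P}_{-, {\mathcal L}_{0}}$, which interchanges the roles of $\B_{Y}^{2,+}$ and $\B_{Y}^{2,-}$ and of $l_{q}^{+},\, l_{q}^{-}$ throughout, flipping the sign of every term and producing (\ref{E:3.28}); comparing this term by term with (\ref{E:3.27}) yields $\zeta_{\B^{2}_{q, {\mathcal P}_{+, {\mathcal L}_{1}}}}(0) + k_{q, {\mathcal P}_{+, {\mathcal L}_{1}}} = -\bigl(\zeta_{\B^{2}_{q, {\mathcal P}_{-, {\mathcal L}_{0}}}}(0) + k_{q, {\mathcal P}_{-, {\mathcal L}_{0}}}\bigr)$. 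I expect the only genuinely delicate points to be the two $s=0$ vanishing claims (the interior contribution and the constant part of the cylinder contribution), which rest on the absence of a $t^{-1/2}$-coefficient in the relevant heat expansions, and the identification of $\ker \B_{Y,q}^{2,\pm}$ together with the equality $\zeta_{\B_{Y,q}^{2,-}} = \zeta_{\B_{Y,q-1}^{2,+}}$, where Assumption A is indispensable; the rest is bookkeeping.
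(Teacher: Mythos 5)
Your proof follows the paper's own derivation (\ref{E:3.25})--(\ref{E:3.28}) essentially step for step, and your justification of $\zeta_{\B_{Y,q}^{2,-}}(s) = \zeta_{\B_{Y,q-1}^{2,+}}(s)$ --- namely that $\nabla^Y$ restricts to a $\B_Y^2$-intertwining isomorphism from $\Imm\Gamma^Y\nabla^Y\Gamma^Y\cap\Omega^{q-1}(Y,E|_Y)$ onto $\Imm\nabla^Y\cap\Omega^{q}(Y,E|_Y)$, with the harmonic pieces ${\mathcal K}$ and $\Gamma^Y{\mathcal K}$ contributing only zero modes --- is a welcome elaboration of the paper's bare citation of (\ref{E:3.10}) and (\ref{E:3.14}).

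One recurring slip should be corrected before writing this up. You twice attribute the $s=0$ vanishing to the absence of a $t^{-1/2}$-coefficient, but $\lim_{s\to0}\frac{1}{\Gamma(s)}\int_0^1 t^{s-1}f(t)\,dt$ extracts the \emph{constant} ($t^0$) coefficient of $f$, since $\frac{1}{\Gamma(s)}\int_0^1 t^{s+\alpha-1}\,dt = \frac{s+O(s^2)}{s+\alpha}\to\delta_{\alpha,0}$ as $s\to0$. For the interior summand $\phi_2\,\widetilde{\E}_q\,\psi_2$ the fact used (and stated by the paper right after (\ref{E:3.25})) is $a_m=0$ in (\ref{E:3.7}): that is the $t^0$-coefficient, killed because $m$ is odd while $a_j=0$ for $j$ odd; the actual $t^{-1/2}$-coefficient is $a_{m-1}$, and since $m-1$ is even this need not vanish. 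For the cylinder-constant part, $\Tr e^{-t\B_{Y,q}^2}+\Tr e^{-t\B_{Y,q-1}^2}$ has only the integer exponents $-\frac{m-1}{2}+j$, so multiplication by $\frac{1}{\sqrt{4\pi t}}$ leaves only half-integer exponents and the $t^0$-term is absent. Your parity mechanism is the right one, but the coefficient it eliminates is $t^0$, not $t^{-1/2}$. With this corrected, everything else --- the $\frac14$ from (\ref{E:3.19}), the evaluation $\lim_{s\to0}\frac{1}{\Gamma(s)}\int_0^1 t^{s-1}\Tr e^{-tA}\,dt=\zeta_A(0)+\dim(\text{generalized }0\text{-eigenspace of }A)$, the identifications $\ker\B_{Y,q}^{2,-}={\mathcal K}\cap\Omega^q(Y,E|_Y)$ and $\ker\B_{Y,q}^{2,+}=\Gamma^Y{\mathcal K}\cap\Omega^q(Y,E|_Y)$ supplied by Assumption A and Corollary \ref{Corollary:2.10}, and the Dirichlet/Neumann interchange producing (\ref{E:3.28}) --- is sound and matches the paper.
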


\vspace{0.3 cm}

\section{The graded zeta-determinant of the odd signature operator on a compact manifold with boundary}

\vspace{0.2 cm}

\subsection{The graded determinant of $\B_{\even, {\mathcal P}_{-, {\mathcal L}_{0}}}$}

In this section we are going to define the graded determinant of $\B_{\even, {\mathcal P}_{-, {\mathcal L}_{0}}}$ and
the refined analytic torsion under the Assumption I and II below.
We begin with the following definitions.

\begin{definition} \label{Definition:4.1}
We define projections ${\widetilde {\mathcal P}}_{0}$,
${\widetilde {\mathcal P}}_{1} : \Omega^{\bullet}(Y, E|_{Y}) \oplus \Omega^{\bullet}(Y, E|_{Y}) \rightarrow
\Omega^{\bullet}(Y, E|_{Y}) \oplus \Omega^{\bullet}(Y, E|_{Y})$ as follows.
For $\phi \in \Omega^{q}(M, E)$
$$
{\widetilde {\mathcal P}}_{0} (\phi|_{Y}) = \begin{cases} {\mathcal P}_{-, {\mathcal L}_{0}} (\phi|_{Y}) \quad
\text{if} \quad q \quad \text{is} \quad \text{even} \\
{\mathcal P}_{+, {\mathcal L}_{1}} (\phi|_{Y}) \quad \text{if} \quad q \quad \text{is} \quad \text{odd} ,
\end{cases}
\qquad
{\widetilde {\mathcal P}}_{1} (\phi|_{Y}) = \begin{cases} {\mathcal P}_{+, {\mathcal L}_{1}} (\phi|_{Y}) \quad
\text{if} \quad q \quad \text{is} \quad \text{even} \\
{\mathcal P}_{-, {\mathcal L}_{0}} (\phi|_{Y}) \quad \text{if} \quad q \quad \text{is} \quad \text{odd} .
\end{cases}
$$

\end{definition}

\noindent
For each $q$ we define

\begin{eqnarray}  \label{E:4.1}
\Omega^{q, \infty}_{{\mathcal P}_{-, {\mathcal L}_{0}}}(M, E) & = & \{ \phi \in \Omega^{q}(M, E) \mid
{\mathcal P}_{-, {\mathcal L}_{0}} \left( \left( \B^{l} \phi \right)|_{Y}\right) = 0, \quad l = 0, 1, 2, \cdots  \}, \nonumber \\
\Omega^{q, \infty}_{{\mathcal P}_{+, {\mathcal L}_{1}}}(M, E) & = & \{ \phi \in \Omega^{q}(M, E) \mid
{\mathcal P}_{+, {\mathcal L}_{1}} \left( \left( \B^{l} \phi \right)|_{Y}\right) = 0, \quad l = 0, 1, 2, \cdots  \}.
\end{eqnarray}

\noindent
Then by Lemma \ref{Lemma:2.12} we have the following cochain complexes

\begin{eqnarray} \label{E:4.2}
(\Omega^{\bullet, \infty}_{{\widetilde {\mathcal P}}_{0}}(M, E), \hspace{0.1 cm} \nabla) & : &
 0 \longrightarrow \Omega^{0, \infty}_{{\mathcal P}_{-, {\mathcal L}_{0}}}(M,E) \stackrel{\nabla}{\longrightarrow}
\Omega^{1, \infty}_{{\mathcal P}_{+, {\mathcal L}_{1}}}(M,E) \stackrel{\nabla}{\longrightarrow} \cdots
\stackrel{\nabla}{\longrightarrow}\Omega^{m, \infty}_{{\mathcal P}_{+, {\mathcal L}_{1}}}(M,E) \longrightarrow 0, \\
(\Omega^{\bullet, \infty}_{{\widetilde {\mathcal P}}_{1}}(M, E), \hspace{0.1 cm} \nabla) & : &
 0 \longrightarrow \Omega^{0, \infty}_{{\mathcal P}_{+, {\mathcal L}_{1}}}(M,E) \stackrel{\nabla}{\longrightarrow}
\Omega^{1, \infty}_{{\mathcal P}_{-, {\mathcal L}_{0}}}(M,E) \stackrel{\nabla}{\longrightarrow} \cdots
\stackrel{\nabla}{\longrightarrow}\Omega^{m, \infty}_{{\mathcal P}_{-, {\mathcal L}_{0}}}(M,E) \longrightarrow 0.
\end{eqnarray}

\noindent
Recall that $\nabla^{\prime}$ is the dual connection of $\nabla$ with respect to ${\frak h}^{E}$.
Since $\nabla$ and $\nabla^{\prime}$ are same on a collar neighborhood $N$ of $Y$,
Lemma \ref{Lemma:2.12} shows that $\Gamma \nabla^{\prime} \Gamma$ maps
$~ \Omega^{q, \infty}_{{\mathcal P}_{-, {\mathcal L}_{0}}/{\mathcal P}_{+, {\mathcal L}_{1}}}(M,E) ~$ into
$~ \Omega^{q-1, \infty}_{{\mathcal P}_{+, {\mathcal L}_{1}}/{\mathcal P}_{-, {\mathcal L}_{0}}}(M,E) ~$.
This fact together with (\ref{E:701}) and Lemma \ref{Lemma:2.11} leads to the following Hodge decomposition.
\begin{eqnarray}    \label{E:400}
\Omega^{q, \infty}_{{\mathcal P}_{-, {\mathcal L}_{0}}}(M,E) & = &
\Imm \nabla \oplus {\mathcal H}^{q}_{\rel}(M, E) \oplus \Imm \Gamma \nabla^{\prime} \Gamma, \nonumber \\
\Omega^{q, \infty}_{{\mathcal P}_{+, {\mathcal L}_{1}}}(M,E) & = &
\Imm \nabla \oplus {\mathcal H}^{q}_{\Abs}(M, E) \oplus \Imm \Gamma \nabla^{\prime} \Gamma.
\end{eqnarray}
This decomposition leads to the following result (cf. Lemma \ref{Lemma:2.11}).
\begin{lemma} The complexes (4.2) and (4.3) compute the following cohomologies.
\begin{eqnarray*}
H^{q}_{{\mathcal P}_{-, {\mathcal L}_{0}}}(M, E) ~ \cong ~ H^{q}(M, Y ; \rho), \qquad
H^{q}_{{\mathcal P}_{+, {\mathcal L}_{1}}}(M, E) ~ \cong ~ H^{q}(M ; \rho).
\end{eqnarray*}
\end{lemma}

\vspace{0.2 cm}

Now we are going to discuss the refined analytic torsion for the cochain complex (4.2).
The exact same method works for the cochain complex (4.3).
In this section we make the following two assumptions and the general case will be discussed in Section 5.

\vspace{0.2 cm}

\noindent
{\bf Assumption $\I$} : \hspace{0.2 cm}
The cochain complex (4.2) is acyclic.  \newline

\vspace{0.1 cm}

\noindent
{\bf Assumption $\II$} : \hspace{0.2 cm}
$\B : \Omega^{\bullet, \infty}_{{\widetilde {\mathcal P}}_{0}}(M, E) \rightarrow \Omega^{\bullet, \infty}_{{\widetilde {\mathcal P}}_{0}}(M, E)$
is invertible.

\vspace{0.3 cm}

\noindent
For each $q$ we define

\begin{equation}  \label{E:4.4}
\Omega^{q, \infty}_{{\widetilde {\mathcal P}}_{0}, -}(M, E)  =  \Omega^{q, \infty}_{{\widetilde {\mathcal P}}_{0}}(M, E)
\cap \ker \nabla , \qquad
\Omega^{q, \infty}_{{\widetilde {\mathcal P}}_{0}, +}(M, E)  =  \Omega^{q, \infty}_{{\widetilde {\mathcal P}}_{0}}(M, E)
\cap \ker \Gamma \nabla \Gamma .
\end{equation}

\noindent
Then the Assumption $\I$ and $\II$ imply (cf. Subsection 6.9 in \cite{BK1}) that

\begin{equation}  \label{E:4.5}
\Omega^{q, \infty}_{{\widetilde {\mathcal P}}_{0}}(M, E) = \Omega^{q, \infty}_{{\widetilde {\mathcal P}}_{0}, -}(M, E) \oplus
\Omega^{q, \infty}_{{\widetilde {\mathcal P}}_{0}, +}(M, E).
\end{equation}

\noindent
We denote by

\begin{equation} \label{E:4.6}
    \Omega^{\even, \infty}_{{\widetilde {\mathcal P}}_{0}, \pm}(M,E)\ =\ \bigoplus_{p=0}^{r-1}\,
    \Omega^{2p, \infty}_{{\mathcal P}_{-, {\mathcal L}_{0}}, \pm}(M,E),
    \qquad
\Omega^{\odd, \infty}_{{\widetilde {\mathcal P}}_{0}, \pm}(M,E)\ =\ \bigoplus_{p=0}^{r-1}\,
    \Omega^{2p+1, \infty}_{{\mathcal P}_{+, {\mathcal L}_{1}}, \pm}(M,E).
\end{equation}

\noindent
Then, $\B$ maps $\Omega^{\even/\odd, \infty}_{{\widetilde {\mathcal P}}_{0}, \pm}(M,E)$ onto
$\Omega^{\even/\odd, \infty}_{{\widetilde {\mathcal P}}_{0}, \pm}(M,E)$ and induces the following commutative diagram.

\vspace{0.2 cm}

\begin{equation} \label{E:4.7}
\begin{CD}
  \Omega^{\even, \infty}_{{\widetilde {\mathcal P}}_{0}, \pm}(M,E)      & @>\B >>  &
\Omega^{\even, \infty}_{{\widetilde {\mathcal P}}_{0}, \pm}(M,E) \\
                     @V{\Gamma}VV        &                          & @VV \Gamma V \\
 \Omega^{\odd, \infty}_{{\widetilde {\mathcal P}}_{1}, \mp}(M,E)     & @> \B >>  &
\Omega^{\odd, \infty}_{{\widetilde {\mathcal P}}_{1}, \mp}(M,E)
\end{CD}
\end{equation}

\vspace{0.3 cm}

\noindent
We note that the spectra of $\B$ defined on $\Omega^{\even, \infty}_{{\mathcal P}_{-, {\mathcal L}_{0}}, \pm} (M, E)$
($\Omega^{\odd, \infty}_{{\mathcal P}_{+, {\mathcal L}_{1}}, \pm} (M, E)$) are equal to those of $\B$ defined on
$\Dom \left( \B^{\pm}_{\even, {\mathcal P}_{-, {\mathcal L}_{0}}} \right)$
$\left( \Dom \left( \B^{\pm}_{\odd, {\mathcal P}_{+, {\mathcal L}_{1}}} \right) \right)$
and the spectra of $\B^{2}$ defined on $\Omega^{q, \infty}_{{\widetilde {\mathcal P}}_{0}, \pm} (M, E)$ are equal to those of
$\B^{2}$ defined on $\Dom \left( \B^{2, \pm}_{q, {\widetilde {\mathcal P}}_{0}} \right)$.
From this observation we abuse notations a little bit so that we write $\B$ or $\B^{2}$ defined on
$\Omega^{\even, \infty}_{{\mathcal P}_{-, {\mathcal L}_{0}}, \pm} (M, E)$,
$\Omega^{\odd, \infty}_{{\mathcal P}_{+, {\mathcal L}_{1}}, \pm} (M, E)$, $\Omega^{q, \infty}_{{\widetilde {\mathcal P}}_{0}, \pm} (M, E)$
by $\B^{\pm}_{\even, {\mathcal P}_{-, {\mathcal L}_{0}}}$, $\B^{\pm}_{\odd, {\mathcal P}_{+, {\mathcal L}_{1}}}$, $\B^{2, \pm}_{q, {\widetilde {\mathcal P}}_{0}}$, respectively.
The diagram (\ref{E:4.7}) leads to the following result.

\begin{lemma} \label{Lemma:4.2}
(1) For each $q$, the zeta-determinants
$\Det \left( \B^{2, \pm}_{q, {\mathcal P}_{-, {\mathcal L}_{0}}} \right)$ and
$\Det \left( \B^{2, \pm}_{q, {\mathcal P}_{+, {\mathcal L}_{1}}} \right)$ are well-defined and
\begin{equation} \label{E:4.8}
\Det \left( \B^{2, +}_{q, {\mathcal P}_{-, {\mathcal L}_{0}}} \right) =
\Det \left( \B^{2, -}_{q+1, {\mathcal P}_{+, {\mathcal L}_{1}}} \right),  \qquad
\Det \left( \B^{2, -}_{q, {\mathcal P}_{-, {\mathcal L}_{0}}} \right) =
\Det \left( \B^{2, +}_{q-1, {\mathcal P}_{+, {\mathcal L}_{1}}} \right).
\end{equation}
(2) Recall that $\dim M = m = 2r -1$.  If $r$ is odd,
\begin{eqnarray}  \label{E:4.9}
& & \eta_{\B_{\even, {\mathcal P}_{-, {\mathcal L}_{0}}}}(s) \hspace{0.1 cm} = \hspace{0.1 cm} \eta_{\B^{+}_{\even, {\mathcal P}_{-, {\mathcal L}_{0}}}}(s)
\hspace{0.1 cm} = \hspace{0.1 cm} \eta_{\B^{+}_{r-1, {\mathcal P}_{-, {\mathcal L}_{0}}}}(s), \qquad
\eta_{\B^{-}_{\even, {\mathcal P}_{-, {\mathcal L}_{0}}}}(s) \hspace{0.1 cm} = \hspace{0.1 cm} 0,  \nonumber \\
& & \eta_{\B_{\odd, {\mathcal P}_{-, {\mathcal L}_{0}}}}(s) \hspace{0.1 cm} = \hspace{0.1 cm} \eta_{\B^{-}_{\odd, {\mathcal P}_{-, {\mathcal L}_{0}}}}(s)
\hspace{0.1 cm} = \hspace{0.1 cm}
\eta_{\B^{-}_{r, {\mathcal P}_{-, {\mathcal L}_{0}}}}(s), \qquad
\eta_{\B^{+}_{\odd, {\mathcal P}_{-, {\mathcal L}_{0}}}}(s) \hspace{0.1 cm} = \hspace{0.1 cm} 0.
\end{eqnarray}
If $r$ is even,
\begin{eqnarray}  \label{E:4.10}
& & \eta_{\B_{\even, {\mathcal P}_{-, {\mathcal L}_{0}}}}(s) \hspace{0.1 cm} = \hspace{0.1 cm} \eta_{\B^{-}_{\even, {\mathcal P}_{-, {\mathcal L}_{0}}}}(s)
\hspace{0.1 cm} = \hspace{0.1 cm} \eta_{\B^{-}_{r, {\mathcal P}_{-, {\mathcal L}_{0}}}}(s), \qquad
\eta_{\B^{+}_{\even, {\mathcal P}_{-, {\mathcal L}_{0}}}}(s) \hspace{0.1 cm} = \hspace{0.1 cm} 0,  \nonumber \\
& & \eta_{\B_{\odd, {\mathcal P}_{-, {\mathcal L}_{0}}}}(s) \hspace{0.1 cm} = \hspace{0.1 cm} \eta_{\B^{+}_{\odd, {\mathcal P}_{-, {\mathcal L}_{0}}}}(s)
\hspace{0.1 cm} = \hspace{0.1 cm}
\eta_{\B^{+}_{r-1, {\mathcal P}_{-, {\mathcal L}_{0}}}}(s), \qquad
\eta_{\B^{-}_{\odd, {\mathcal P}_{-, {\mathcal L}_{0}}}}(s) \hspace{0.1 cm} = \hspace{0.1 cm} 0.
\end{eqnarray}
\end{lemma}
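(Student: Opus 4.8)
The plan is to establish (1) by using the de Rham differential to identify the nonzero generalized eigenspaces of $\B^{2,+}_{q,\mathcal P_{-,\mathcal L_0}}$ with those of $\B^{2,-}_{q+1,\mathcal P_{+,\mathcal L_1}}$, and to establish (2) by observing that on $\ker\nabla$ (resp.\ on $\ker\Gamma\nabla\Gamma$) the operator $\B$ collapses to $\nabla\Gamma$ (resp.\ to $\Gamma\nabla$), which shifts the form degree according to an involution and hence forces the associated eta function to vanish whenever that involution has no fixed value. Throughout, the Agmon angle is the one fixed in Section~3 via Theorem~\ref{Theorem:2.22}; it serves for all the operators below since their spectra are subsets of the spectra already treated there.

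For (1): since $\nabla^2=(\Gamma\nabla\Gamma)^2=0$ one has $\B\nabla=\nabla\B=\nabla\Gamma\nabla$, so $\nabla$ commutes with $\B^2$, and by Lemma~\ref{Lemma:2.12} and Corollary~\ref{Corollary:2.13} it carries $\Omega^{q,\infty}_{\widetilde{\mathcal P}_0,+}(M,E)$ into $\Omega^{q+1,\infty}_{\widetilde{\mathcal P}_0,-}(M,E)$. On $\ker\Gamma\nabla\Gamma$ one has $\nabla\Gamma=0$ and hence $\B^2=\Gamma\nabla\Gamma\nabla$, so $\nabla$ is injective on every generalized eigenspace of $\B^{2,+}_q$ with nonzero generalized eigenvalue; conversely on $\ker\nabla$ one has $\B^2=\nabla\Gamma\nabla\Gamma$, so each such $\xi$ satisfies $\xi=\nabla\bigl((\B^2)^{-1}\Gamma\nabla\Gamma\,\xi\bigr)$ and lies in the image of $\nabla$. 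Under Assumptions~I and~II there are no zero generalized eigenvalues, so $\nabla$ is an isomorphism intertwining $\B^{2,+}_{q,\mathcal P_{-,\mathcal L_0}}$ and $\B^{2,-}_{q+1,\mathcal P_{+,\mathcal L_1}}$; in particular these operators share the same spectrum with generalized multiplicities, hence the same zeta function, which yields (\ref{E:4.8}) once the determinants are known to exist. For that, put $f_q(t)=\Tr e^{-t\B^{2,-}_q}$ and $g_q(t)=\Tr e^{-t\B^{2,+}_q}$; the splitting (\ref{E:4.5}) gives $f_q+g_q=\Tr e^{-t\B^2}$ for $\B^2$ on $\Omega^{q,\infty}_{\widetilde{\mathcal P}_0}(M,E)$, which has an asymptotic expansion by (\ref{E:3.20}) and its $\mathcal P_{+,\mathcal L_1}$-analogue, while the isomorphism just produced gives $g_q=f_{q+1}$. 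Since $\nabla$ vanishes on top-degree forms and $\nabla\Gamma$ vanishes on $0$-forms, Assumption~II forces $\Omega^{m,\infty}_{\widetilde{\mathcal P}_0,+}(M,E)=\Omega^{0,\infty}_{\widetilde{\mathcal P}_0,-}(M,E)=0$, i.e.\ $g_m=f_0=0$; solving $f_q+f_{q+1}=f_q+g_q$ downward from $f_0=0$ writes every $f_q$, and hence every $g_q$, as an alternating sum of functions possessing asymptotic expansions, so the associated zeta functions are regular at $s=0$ by the usual Mellin-transform estimate and the determinants in (1) are well defined. The second identity in (\ref{E:4.8}) is the first one with $q$ replaced by $q-1$.

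For (2): by (\ref{E:4.5}) we have $\B_{\even,\mathcal P_{-,\mathcal L_0}}=\B^-_{\even,\mathcal P_{-,\mathcal L_0}}\oplus\B^+_{\even,\mathcal P_{-,\mathcal L_0}}$, so $\eta_{\B_{\even,\mathcal P_{-,\mathcal L_0}}}(s)=\eta_{\B^-_{\even,\mathcal P_{-,\mathcal L_0}}}(s)+\eta_{\B^+_{\even,\mathcal P_{-,\mathcal L_0}}}(s)$. On $\ker\Gamma\nabla\Gamma$ the operator $\B$ equals $\Gamma\nabla$, which sends $\Omega^{2p,\infty}_{\mathcal P_{-,\mathcal L_0},+}(M,E)$ to $\Omega^{2(r-1-p),\infty}_{\mathcal P_{-,\mathcal L_0},+}(M,E)$; on $\ker\nabla$ it equals $\nabla\Gamma$, which sends $\Omega^{2p,\infty}_{\mathcal P_{-,\mathcal L_0},-}(M,E)$ to $\Omega^{2(r-p),\infty}_{\mathcal P_{-,\mathcal L_0},-}(M,E)$, the degree-$0$ summand being $0$ by Assumption~II as above. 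The involution $p\mapsto r-1-p$ of $\{0,\dots,r-1\}$ has a fixed value exactly when $r$ is odd, and the involution $p\mapsto r-p$ of $\{1,\dots,r-1\}$ exactly when $r$ is even. When there is no fixed value one pairs off the degrees and defines a grading operator $\sigma$, equal to $+1$ or $-1$ on each summand and hence commuting with the boundary projections, so that $\sigma\B\sigma=-\B$; since a similarity preserves the spectrum with generalized multiplicities (giving $\eta_\B=\eta_{-\B}$) while the substitution $\B\mapsto-\B$ negates every eigenvalue appearing in (\ref{3.2}) (giving $\eta_{-\B}=-\eta_\B$), one concludes $\eta_\B(s)\equiv 0$. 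When a fixed value does occur, the same $\sigma$ on its complement kills that part's eta contribution, leaving only the eta function of $\B$ restricted to the single middle-degree summand, which carries the boundary condition $\mathcal P_{-,\mathcal L_0}$ in every case, as recorded in (\ref{E:4.9}) and (\ref{E:4.10}). This proves the even-form statements; the odd-form statements follow from the identical argument applied to the realization of $\B$ on $\Omega^{\odd,\infty}_{\widetilde{\mathcal P}_1}(M,E)=\bigoplus_p\Omega^{2p+1,\infty}_{\mathcal P_{-,\mathcal L_0}}(M,E)$, which admits the analogue of (\ref{E:4.5}) because Assumption~II together with the diagram (\ref{E:4.7}) makes $\B$ invertible on $\Omega^{\bullet,\infty}_{\widetilde{\mathcal P}_1}(M,E)$ (alternatively, by Lemma~\ref{Lemma:3.1}(3) one has $\eta_{\B_{\odd,\mathcal P_{-,\mathcal L_0}}}=\eta_{\B_{\even,\mathcal P_{+,\mathcal L_1}}}$ and one reruns the even argument with $\mathcal P_{+,\mathcal L_1}$).

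The routine but error-prone ingredient is the parity bookkeeping that attaches the middle degree to the correct one of the four boundary conditions; the point genuinely needing care is the claim that the eta function (\ref{3.2}) of a non-self-adjoint operator is a similarity invariant obeying $\eta_{-\B}=-\eta_\B$ (so that the $\sigma$-conjugation really forces vanishing, including the behaviour of the finitely many eigenvalues near the imaginary axis), together with checking that the auxiliary complex $\Omega^{\bullet,\infty}_{\widetilde{\mathcal P}_1}(M,E)$ inherits the direct-sum decomposition (\ref{E:4.5}) from Assumptions~I and~II.
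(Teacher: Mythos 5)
Your proof is correct and takes essentially the same approach as the paper's, just with more of the details filled in. For part~(2), your grading operator $\sigma$ satisfying $\sigma\B\sigma^{-1}=-\B$ on the paired summands is precisely the paper's observation that if $\phi+\psi$ (with $\phi\in\Omega^{q}_{+}$, $\psi\in\Omega^{m-q-1}_{+}$) is a generalized eigensection of $\B_{\even}$ with generalized eigenvalue $\lambda$, then $\phi-\psi$ is one with generalized eigenvalue $-\lambda$; and the anti-symmetry $\eta_{-\B}(s)=-\eta_{\B}(s)$ that you flag as the delicate point does hold for~(\ref{3.2}), because negating every generalized eigenvalue interchanges the positively and negatively weighted groups of terms (the purely imaginary eigenvalues are excluded from~(\ref{3.2}) altogether), so the conjugation really does force the eta function to vanish on the paired part. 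For part~(1) the paper merely cites Lemma~\ref{Lemma:2.12} and Lemma~\ref{Lemma:3.1}(1); your explicit intertwining of $\B^{2,+}_{q}$ with $\B^{2,-}_{q+1}$ by $\nabla$, followed by the bootstrap $g_q=f_{q+1}$, $f_q+g_q$ having an expansion, and $f_0=g_m=0$ forced by Assumption~II, supplies the argument the paper leaves implicit for why each split heat trace admits a short-time asymptotic expansion and hence each split determinant is well defined.
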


\begin{proof}
The first assertion comes from Lemma \ref{Lemma:3.1}.
For the second assertion, we consider only $\B^{+}_{\even, {\mathcal P}_{-, {\mathcal L}_{0}}}$. Other cases are treated in the same way.
We note that $\B_{\even}$ maps $\Omega^{q}_{+, {\mathcal P}_{-, {\mathcal L}_{0}}}(M, E) \oplus \Omega^{m-q-1}_{+, {\mathcal P}_{-, {\mathcal L}_{0}}}(M, E)$
into $\Omega^{q}_{+} (M, E) \oplus \Omega^{m-q-1}_{+} (M, E)$.
Let $\phi \in \Omega^{q}_{+, {\mathcal P}_{-, {\mathcal L}_{0}}}(M, E)$ and $\psi \in \Omega^{m-q-1}_{+, {\mathcal P}_{-, {\mathcal L}_{0}}}(M, E)$.
If $\phi + \psi$ is a generalized eigensection of $\B_{\even}$ with a generalized eigenvalue $\lambda$, then $\phi - \psi$
is a generalized eigensection of $\B_{\even}$ with a generalized eigenvalue $-\lambda$.
This observation leads to the second assertion.
\end{proof}

\noindent
Throughout this paper, for an elliptic differential operator $D$ we mean by $\log \Det_{\theta} D$
the particular value of the negative of the derivative of the zeta function at zero, {\it i.e.}
$$
\log \Det_{\theta} D := - \left( \frac{d}{ds} \zeta_{D, \theta} (s) \right)|_{s=0}.
$$

\vspace{0.2 cm}

\noindent
The following result is straightforward.

\begin{lemma} \label{Lemma:4.53}
$$
\log \Det_{\theta} \B^{+}_{\even, {\mathcal P}_{-, {\mathcal L}_{0}}}    =
\frac{1}{2} \log \Det_{2 \theta} \B^{2, +}_{\even, {\mathcal P}_{-, {\mathcal L}_{0}}} - i \pi
\eta(\B^{+}_{\even, {\mathcal P}_{-, {\mathcal L}_{0}}}) + \frac{i \pi}{2}  \zeta_{\B^{2, +}_{\even, {\mathcal P}_{-, {\mathcal L}_{0}}}, 2 \theta} (0).
$$
If $r$ is even with $\dim M = 2r -1$, then $\hspace{0.1 cm} \eta(\B^{+}_{\even, {\mathcal P}_{-, {\mathcal L}_{0}}}) = 0$.
Similar statement holds for $\B^{+}_{\even, {\mathcal P}_{+, {\mathcal L}_{1}}}$.
\end{lemma}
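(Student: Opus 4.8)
The plan is to relate the zeta-determinant of the first-order operator $\B^+_{\even,\mathcal{P}_{-,\mathcal{L}_0}}$ to that of its square via the standard identity for a (non-self-adjoint) elliptic operator whose spectrum is supported away from the ray $R_\theta$. First I would invoke Theorem \ref{Theorem:2.22}, which guarantees that $\B^+_{\even,\mathcal{P}_{-,\mathcal{L}_0}}$ has only finitely many eigenvalues outside an arbitrarily thin sector about the real axis, so an Agmon angle $\theta$ with $-\frac{\pi}{2}<\theta<0$ exists for $\B^+_{\even,\mathcal{P}_{-,\mathcal{L}_0}}$ and $2\theta$ is an Agmon angle for $\B^{2,+}_{\even,\mathcal{P}_{-,\mathcal{L}_0}}$. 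The invertibility of $\B$ on the acyclic complex (Assumption II) means there is no zero eigenvalue to excise, so the zeta functions $\zeta_{\B^+_{\even,\mathcal{P}_{-,\mathcal{L}_0}},\theta}(s)$ and $\zeta_{\B^{2,+}_{\even,\mathcal{P}_{-,\mathcal{L}_0}},2\theta}(s)$ are defined outright, and regularity at $s=0$ follows from Lemma \ref{Lemma:3.1}.

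Next I would carry out the standard spectral bookkeeping. Split the generalized eigenvalues $\lambda$ of $\B^+_{\even,\mathcal{P}_{-,\mathcal{L}_0}}$ according to the sign of $\re\lambda$; for each such $\lambda$ one has $\lambda^{-s}_\theta$ and $(\lambda^2)^{-s}_{2\theta}$, and the branch identity gives $\log_\theta\lambda^2 = 2\log_\theta\lambda$ when $\re\lambda>0$, whereas when $\re\lambda<0$ one picks up an extra $\mp 2\pi i$ depending on $\Imm\lambda$. Summing $-\frac{d}{ds}$ of the resulting expression at $s=0$, the pieces with $\re\lambda>0$ contribute $\frac12\log\Det_{2\theta}$ of the positive-real-part part of $\B^{2,+}$, and the pieces with $\re\lambda<0$, together with the purely imaginary eigenvalues counted by the $\mathfrak L^{\pm},\mathfrak L^0$ data, reassemble — using Definition \ref{Definition:3.51} — into $-i\pi\,\eta(\B^+_{\even,\mathcal{P}_{-,\mathcal{L}_0}})$ plus the correction term $\frac{i\pi}{2}\zeta_{\B^{2,+}_{\even,\mathcal{P}_{-,\mathcal{L}_0}},2\theta}(0)$; the latter arises because the constant term $\zeta(0)$ measures the discrepancy between counting eigenvalues and integrating the heat trace, exactly as in the closed case (cf. the computation in [5], [6], [11]). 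This is the routine but bookkeeping-heavy part and I would present it compactly rather than term by term.

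For the vanishing claim when $r$ is even, I would appeal directly to Lemma \ref{Lemma:4.2}(2): when $r$ is even, equation (\ref{E:4.10}) gives $\eta_{\B^+_{\even,\mathcal{P}_{-,\mathcal{L}_0}}}(s)=0$ identically, and moreover the symmetry argument in the proof of Lemma \ref{Lemma:4.2} (pairing $\phi+\psi$ with $\phi-\psi$ to exchange $\lambda\leftrightarrow-\lambda$) shows the generalized eigenvalue multiplicities of $\B^+_{\even,\mathcal{P}_{-,\mathcal{L}_0}}$ are symmetric under $\lambda\mapsto-\lambda$, so $\mathfrak L^+=\mathfrak L^-$ on this piece. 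Hence every term in Definition \ref{Definition:3.51} for $\eta(\B^+_{\even,\mathcal{P}_{-,\mathcal{L}_0}})$ vanishes, giving $\eta(\B^+_{\even,\mathcal{P}_{-,\mathcal{L}_0}})=0$. The statement for $\B^+_{\even,\mathcal{P}_{+,\mathcal{L}_1}}$ follows verbatim, using the corresponding half of Lemma \ref{Lemma:4.2} and the $\Gamma$-intertwining relations (\ref{E:2.16}), (\ref{E:2.19}).

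The main obstacle I anticipate is the precise tracking of the logarithm branches and the $\mathfrak L^0$ (zero generalized eigenvalue) contribution: although Assumption II removes the kernel of $\B$ on the full complex, one must be careful that the restriction $\B^+_{\even,\mathcal{P}_{-,\mathcal{L}_0}}$ inherits invertibility and that Definition \ref{Definition:3.51}'s $\eta$-normalization lines up with the sign conventions built into the Agmon-angle choice $-\frac{\pi}{2}<\theta<0$. Getting the $+\frac{i\pi}{2}\zeta(0)$ correction term to come out with the correct sign — rather than $-\frac{i\pi}{2}\zeta(0)$ — is the delicate point, and I would pin it down by comparing against the closed-manifold identity in [5, Section 6] specialized to the relevant half-complex.
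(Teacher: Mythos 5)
Your proposal is essentially correct and matches what the paper has in mind; the paper itself offers no proof, prefacing the lemma only with ``Simple computation with Lemma~\ref{Lemma:4.2} leads to the following result,'' and your outline is a faithful expansion of that computation. The identity
$$\log\Det_\theta D \;=\; \tfrac12\log\Det_{2\theta}D^2 \,-\, i\pi\,\eta(D) \,+\, \tfrac{i\pi}{2}\,\zeta_{D^2,2\theta}(0)$$
is precisely the Braverman--Kappeler/Cappell--Miller bookkeeping identity, applied here to $D=\B^{+}_{\even,{\mathcal P}_{-,{\mathcal L}_0}}$ under a well-posed boundary condition (Theorem~\ref{Theorem:2.22} supplying discreteness and the Agmon-angle sector, Lemma~\ref{Lemma:3.1} supplying regularity at $s=0$). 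Your concern about the sign of the $\tfrac{i\pi}{2}\zeta(0)$ correction is resolvable by a direct check in the self-adjoint model: writing $\zeta_\pm(s)$ for the partial zeta over eigenvalues with $\re\lambda\gtrless 0$, one finds $\log\Det_\theta D - \tfrac12\log\Det_{2\theta}D^2 = i\pi\zeta_-(0)$, and $-i\pi\eta(D)+\tfrac{i\pi}{2}\zeta_{D^2}(0)=i\pi\zeta_-(0)$ as well, confirming the plus sign. One small imprecision: the extra $\mp 2\pi i$ branch jump is governed not by the sign of $\re\lambda$ but by which side of the cut ray $R_\theta$ (equivalently, which side of $R_{2\theta}$ the square $\lambda^2$ lands on) the eigenvalue sits; for $\theta\in(-\tfrac{\pi}{2},0)$ this is close to, but not literally, the sign of $\Imm\lambda$. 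For the vanishing claim when $r$ is even, your reduction via (E:4.10) together with the $\lambda\mapsto-\lambda$ pairing from the proof of Lemma~\ref{Lemma:4.2}(2) to conclude ${\frak L}^+={\frak L}^-$, plus Assumption~II to force ${\frak L}^0=0$, gives exactly what is needed in Definition~\ref{Definition:3.51}. Taken together, the proposal is sound.
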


We now define the graded zeta-determinant of $\B_{\even}$ with respect to
${\mathcal P}_{-, {\mathcal L}_{0}}$ and ${\mathcal P}_{+, {\mathcal L}_{1}}$.

\vspace{0.2 cm}

\begin{definition} \label{Definition:4.3}
Under the Assumption $\I$ and $\II$,
we define the graded determinant $\Det_{\gr, \theta} \B_{\even, {\mathcal P}_{-, {\mathcal L}_{0}}}$ and
$\Det_{\gr, \theta} \B_{\even, {\mathcal P}_{+, {\mathcal L}_{1}}}$ by
$$
\Det_{\gr, \theta} \B_{\even, {\mathcal P}_{-, {\mathcal L}_{0}}} = \frac{\Det_{\theta}
\B^{+}_{\even, {\mathcal P}_{-, {\mathcal L}_{0}}}}{\Det_{\theta} (- \B^{-}_{\even, {\mathcal P}_{-, {\mathcal L}_{0}}})},
\qquad
\Det_{\gr, \theta} \B_{\even, {\mathcal P}_{+, {\mathcal L}_{1}}} = \frac{\Det_{\theta}
\B^{+}_{\even, {\mathcal P}_{+, {\mathcal L}_{1}}}}{\Det_{\theta} (- \B^{-}_{\even, {\mathcal P}_{+, {\mathcal L}_{1}}})}.
$$
\end{definition}

\vspace{0.2 cm}

\noindent
We introduce the notations $\B^{2}_{q, {\widetilde {\mathcal P}_{0}}}$ and $\B^{2}_{q, {\widetilde {\mathcal P}_{1}}}$
defined by

\begin{equation}  \label{E:4.11}
\B^{2}_{q, {\widetilde {\mathcal P}_{0}}}  =  \begin{cases} \B^{2}_{q, {\mathcal P}_{-, {\mathcal L}_{0}}}
\hspace{0.3 cm} \text{for} \hspace{0.3 cm}  q  \hspace{0.2 cm} \text{even}   \\
\B^{2}_{q, {\mathcal P}_{+, {\mathcal L}_{1}}}
\hspace{0.3 cm} \text{for} \hspace{0.3 cm}  q  \hspace{0.2 cm} \text{odd},
\end{cases}
\qquad
\B^{2}_{q, {\widetilde {\mathcal P}_{1}}}  =  \begin{cases} \B^{2}_{q, {\mathcal P}_{+, {\mathcal L}_{1}}}
\hspace{0.3 cm} \text{for} \hspace{0.2 cm}  q  \hspace{0.3 cm} \text{even}   \\
\B^{2}_{q, {\mathcal P}_{-, {\mathcal L}_{0}}}
\hspace{0.3 cm} \text{for} \hspace{0.2 cm}  q  \hspace{0.3 cm} \text{odd}.
\end{cases}
\end{equation}

\noindent
Then simple computation shows that

\begin{eqnarray}  \label{E:4.12}
& & \log \Det_{\gr, \theta} (\B_{\even, {\mathcal P}_{-, {\mathcal L}_{0}}}) \hspace{0.1 cm}
 = \hspace{0.1 cm}  \log \Det_{\theta}
(\B^{+}_{\even, {\mathcal P}_{-, {\mathcal L}_{0}}}) - \log \Det_{\theta}(- \B^{-}_{\even, {\mathcal P}_{-, {\mathcal L}_{0}}})  \nonumber \\
& = & \frac{1}{2} \left( \log \Det_{2\theta}\B^{2, +}_{\even, {\mathcal P}_{-, {\mathcal L}_{0}}} -
\log \Det_{2\theta} \B^{2, -}_{\even, {\mathcal P}_{-, {\mathcal L}_{0}}} \right) \nonumber \\
 & & + \frac{\pi i }{2} \left(\zeta_{ \B^{2, +}_{\even, {\mathcal P}_{-, {\mathcal L}_{0}}}}(0) -
\zeta_{\B^{2, -}_{\even, {\mathcal P}_{-, {\mathcal L}_{0}}}}(0) \right)
 - i \pi \left( \eta (\B^{+}_{\even, {\mathcal P}_{-, {\mathcal L}_{0}}})
+ \eta (\B^{-}_{\even, {\mathcal P}_{-, {\mathcal L}_{0}}}) \right) \nonumber \\
& = & \frac{1}{2} \left( \log \Det_{2\theta} \B^{2, +}_{\even, {\mathcal P}_{-, {\mathcal L}_{0}}} -
\log \Det_{2\theta} \B^{2, +}_{\odd, {\mathcal P}_{+, {\mathcal L}_{1}}} \right)    \nonumber  \\
& & + \frac{\pi i }{2} \left(\zeta_{ \B^{2, +}_{\even, {\mathcal P}_{-, {\mathcal L}_{0}}}}(0)  -
\zeta_{\B^{2, +}_{\odd, {\mathcal P}_{+, {\mathcal L}_{1}}}}(0) \right)
 - i \pi \eta (\B_{\even, {\mathcal P}_{-, {\mathcal L}_{0}}}) \nonumber \\
& = & \frac{1}{2} \sum_{q=0}^{m} (-1)^{q+1} \cdot q \cdot \log \Det_{2\theta} \B^{2}_{q, {\widetilde {\mathcal P}}_{0}}
-  i \pi \eta (\B_{\even, {\mathcal P}_{-, {\mathcal L}_{0}}})
  + \frac{\pi i }{2} \sum_{q=0}^{m} (-1)^{q+1} \cdot q \cdot \zeta_{\B^{2}_{q, {\widetilde {\mathcal P}_{0}}}}(0).
\end{eqnarray}

\vspace{0.2 cm}

\noindent
Similarly, we have

\begin{eqnarray}  \label{E:4.13}
\log \Det_{\gr, \theta} (\B_{\even, {\mathcal P}_{+, {\mathcal L}_{1}}})
& = & \frac{1}{2} \sum_{q=0}^{m} (-1)^{q+1} \cdot q \cdot \log \Det_{2\theta} \B^{2}_{q, {\widetilde {\mathcal P}_{1}}}
- i \pi \eta (\B_{\even, {\mathcal P}_{+, {\mathcal L}_{1}}})   \nonumber \\
& &  + \hspace{0.1 cm}
\frac{\pi i }{2} \sum_{q=0}^{m} (-1)^{q+1} \cdot q \cdot \zeta_{\B^{2}_{q, {\widetilde {\mathcal P}_{1}}}}(0).
\end{eqnarray}

\vspace{0.3 cm}

\noindent
The equations (\ref{E:4.12}) and (\ref{E:4.13}) with Lemma \ref{Lemma:3.2} lead to the following results.

\vspace{0.2 cm}

\begin{corollary}  \label{Corollary:4.5}
Putting  $l_{q}^{\pm} = \dim \ker \B^{2, \pm}_{Y, q}$, the following equalities hold under the Assumption $\I$ and $\II$.

\begin{eqnarray*}
(1) \hspace{0.2 cm} \log \Det_{\gr, \theta} (\B_{\even, {\mathcal P}_{-, {\mathcal L}_{0}}}) & = &
\frac{1}{2} \sum_{q=0}^{m} (-1)^{q+1} \cdot q \cdot \log \Det_{2\theta} \B^{2}_{q, {\widetilde {\mathcal P}_{0}}}
-  i \pi \hspace{0.1 cm} \eta(\B_{\even, {\mathcal P}_{-, {\mathcal L}_{0}}}) \\
& + & \frac{\pi i }{2} \left( \frac{1}{4} \sum_{q=0}^{m-1} \zeta_{\B_{Y, q}^{2}}(0) + \sum_{q=0}^{r-2}(r-1-q) (l_{q}^{+} - l_{q}^{-})  \right). \\
(2) \hspace{0.2 cm}  \log \Det_{gr, \theta} (\B_{\even, {\mathcal P}_{+, {\mathcal L}_{1}}})  & = &
 \frac{1}{2} \sum_{q=0}^{m} (-1)^{q+1} \cdot q \cdot \log \Det_{2\theta} \B^{2}_{q, {\widetilde {\mathcal P}_{1}}}
- i \pi \hspace{0.1 cm} \eta(\B_{\even, {\mathcal P}_{+, {\mathcal L}_{1}}}) \\
& - & \frac{\pi i }{2} \left(\frac{1}{4} \sum_{q=0}^{m-1} \zeta_{\B_{Y, q}^{2}}(0)  +  \sum_{q=0}^{r-2}(r-1-q) (l_{q}^{+} - l_{q}^{-}) \right).
\end{eqnarray*}

\end{corollary}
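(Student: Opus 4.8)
The plan is to deduce the corollary directly from the identities (\ref{E:4.12}) and (\ref{E:4.13}) together with Lemma \ref{Lemma:3.2}. Comparing (\ref{E:4.12}) with the desired formula in part (1), every term already has the required form except the weighted zeta sum $\sum_{q=0}^{m}(-1)^{q+1}\,q\,\zeta_{\B^{2}_{q,{\widetilde {\mathcal P}_{0}}}}(0)$, so the whole task is to rewrite this sum in terms of the boundary data $\zeta_{\B_{Y,q}^{2}}(0)$ and the integers $l_{q}^{\pm}$ of (\ref{E:3.270}).

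First I would record that, under Assumptions I and II, $k_{q,{\widetilde {\mathcal P}_{0}}}=k_{q,{\widetilde {\mathcal P}_{1}}}=0$: since $\B$ is injective on $\Omega^{\bullet,\infty}_{{\widetilde {\mathcal P}}_{0}}(M,E)$ and, by construction, preserves that space, iterating $\B$ shows that $\B^{2}$ has no generalized $0$-eigensections there, and by the coincidence of spectra noted just before Lemma \ref{Lemma:4.2} this transfers to $\B^{2}_{q,{\widetilde {\mathcal P}_{0}}}$, and to $\B^{2}_{q,{\widetilde {\mathcal P}_{1}}}$ after conjugating by $\Gamma$ (cf. (\ref{E:4.7})). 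Hence Lemma \ref{Lemma:3.2} applies with the $k$-term absent: $\zeta_{\B^{2}_{q,{\widetilde {\mathcal P}_{0}}}}(0)$ equals the right-hand side of its first line for $q$ even and the negative of it for $q$ odd. Abbreviating $Z_{q}:=\zeta_{\B_{Y,q}^{2,+}}(0)$ and $D_{q}:=l_{q}^{+}-l_{q}^{-}$, and invoking the identity $\zeta_{\B_{Y,q}^{2,-}}(s)=\zeta_{\B_{Y,q-1}^{2,+}}(s)$ used in the derivation of (\ref{E:3.27}), both parities collapse to the single formula $\zeta_{\B^{2}_{q,{\widetilde {\mathcal P}_{0}}}}(0)=\tfrac{(-1)^{q}}{4}\bigl(Z_{q}-Z_{q-2}+D_{q}+D_{q-1}\bigr)$. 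Multiplying by $(-1)^{q+1}q$ cancels the alternating sign, so $\sum_{q=0}^{m}(-1)^{q+1}q\,\zeta_{\B^{2}_{q,{\widetilde {\mathcal P}_{0}}}}(0)=-\tfrac14\sum_{q=0}^{m}q\,(Z_{q}-Z_{q-2})-\tfrac14\sum_{q=0}^{m}q\,(D_{q}+D_{q-1})$.

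Next I would carry out two Abel summations. For the $Z$-sum, shifting the index in $\sum q\,Z_{q-2}$ and using $Z_{m-1}=Z_{m}=0$ (which hold because $\dim Y=2r-2$ forces $\Imm\Gamma^{Y}\nabla^{Y}\Gamma^{Y}\cap\Omega^{q}(Y)=0$ for $q=m-1,m$), the sum telescopes to $-2\sum_{q=0}^{m-2}Z_{q}$; since $\zeta_{\B_{Y,q}^{2}}(0)=Z_{q-1}+Z_{q}$ — immediate from the second decomposition in Corollary \ref{Corollary:2.10} and Assumption A — this equals $-\sum_{q=0}^{m-1}\zeta_{\B_{Y,q}^{2}}(0)$. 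For the $D$-sum, the relation $\Gamma^{Y}{\mathcal K}\cap\Omega^{q}(Y)=\Gamma^{Y}\bigl({\mathcal K}\cap\Omega^{m-1-q}(Y)\bigr)$ gives $l_{q}^{+}=l_{m-1-q}^{-}$, whence $D_{m-1-q}=-D_{q}$ and in particular $D_{r-1}=D_{m}=0$; rewriting $\sum_{q=0}^{m}q\,(D_{q}+D_{q-1})=\sum_{q=0}^{m-1}(2q+1)D_{q}$ and pairing the index $q$ with $m-1-q$ converts it into $-4\sum_{q=0}^{r-2}(r-1-q)D_{q}$. Substituting both back reproduces exactly the bracket in part (1); part (2) follows in the same way from (\ref{E:4.13}) and the second line of Lemma \ref{Lemma:3.2}, the overall sign reversal corresponding to the interchange ${\mathcal P}_{-,{\mathcal L}_{0}}\leftrightarrow{\mathcal P}_{+,{\mathcal L}_{1}}$.

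The only point requiring genuine argument rather than bookkeeping is the vanishing of $k_{q,{\widetilde {\mathcal P}_{0}}}$ and $k_{q,{\widetilde {\mathcal P}_{1}}}$, that is, that Assumptions I and II really suppress the generalized kernels of the degree-wise Laplacians; once that is settled the corollary is a short computation built on telescoping sums and the boundary reflection $q\mapsto m-1-q$.
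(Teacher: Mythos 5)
Your proposal is correct and takes the same route as the paper's own (essentially unspelled-out) proof: the paper simply states that (\ref{E:4.12}), (\ref{E:4.13}) and Lemma \ref{Lemma:3.2} lead to the corollary, and your write-up fills in exactly that bookkeeping, including the needed observation that $k_{q,{\widetilde {\mathcal P}}_{0}}=k_{q,{\widetilde {\mathcal P}}_{1}}=0$ under Assumptions I and II, the identity $\zeta_{\B^2_{Y,q}}(0)=Z_{q-1}+Z_q$, and the reflection $l_q^{+}=l_{m-1-q}^{-}$ driving the two Abel summations.
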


\vspace{0.2 cm}

\subsection{The metric dependency of the graded determinant of $\B_{\even, {\mathcal P}_{-, {\mathcal L}_{0}}}$}

In this subsection we discuss the metric dependency of the graded determinant $\Det_{\gr, \theta} \B_{\even, {\mathcal P}_{-, {\mathcal L}_{0}}}$.
Let $\{ g_{v}^{M} \mid -\delta < v < \delta \}$, $\delta > 0$, be a family of Riemannian metrics on $M$ such that each $g_{v}^{M}$
is a product metric and does not vary on $[0, \epsilon ) \times Y$ for fixed $\epsilon > 0$.
We denote by $\B_{\even}(v)$ ($\B^{2}_{q}(v)$) the (square of) odd signature operator corresponding to $g_{v}^{M}$.
In this subsection we assume that $\B_{\even}(v)$ and $\B^{2}_{q}(v)$ have the same Agmon angles $\theta$ and $2 \theta$ ($- \frac{\pi}{2} < \theta < 0$)
which does not depend on $v$.

\vspace{0.2 cm}

\begin{lemma} \label{Lemma:4.6}
We assume that for each $q$,
$\B^{2}_{q, {\widetilde {\mathcal P}}_{0}}(v) : \Omega^{q, \infty}_{{\widetilde {\mathcal P}}_{0}} (M, E) \rightarrow
\Omega^{q, \infty}_{{\widetilde {\mathcal P}}_{0}} (M, E)$ is an invertible operator and has the same Agmon angle $2 \theta$. Then :
$$
\frac{d}{dv} \sum_{q=0}^{m} (-1)^{q+1} \cdot q \cdot \log \Det_{2\theta} \B^{2}_{q, {\widetilde {\mathcal P}_{0}}}(v)
\hspace{0.1 cm} = \hspace{0.1 cm} 0.
$$
The same assertion holds for $\B^{2}_{q, {\widetilde {\mathcal P}}_{1}}(v)$.
\end{lemma}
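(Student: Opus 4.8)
The plan is to differentiate $\log\Det_{2\theta}\B^{2}_{q,{\widetilde {\mathcal P}}_{0}}(v)$ under the heat trace, observe that the resulting quantity is a local integral over the interior of $M$ that is insensitive to the boundary condition, and then conclude by the parity argument that already kills the analogous combination on a closed odd--dimensional manifold.

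First I would dispose of two bookkeeping points. Since $g^{M}_{v}$ is independent of $v$ on $[0,\epsilon)\times Y$, the operators $\Gamma^{Y}$, $\nabla^{Y}$, the projections ${\mathcal P}_{-,{\mathcal L}_{0}}$, ${\mathcal P}_{+,{\mathcal L}_{1}}$, and $\B$ restricted to a collar of $Y$ are all independent of $v$; hence the projections ${\widetilde {\mathcal P}}_{0}$, ${\widetilde {\mathcal P}}_{1}$ and the domain of $\B^{2}_{q,{\widetilde {\mathcal P}}_{0}}(v)$ (cf. (\ref{E:4.11})) are $v$--independent, so one may differentiate $e^{-t\B^{2}_{q,{\widetilde {\mathcal P}}_{0}}(v)}$ in $v$ with no boundary or domain correction. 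Moreover $\dot{\B^{2}_{q}}:=\tfrac{d}{dv}\B^{2}_{q}(v)$ is a differential operator of order $\le 2$ whose coefficients are supported in $M\setminus\bigl([0,\epsilon)\times Y\bigr)$.

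Next I would invoke the standard variation formula for zeta--determinants of Laplace--type (not necessarily self--adjoint) operators with a fixed Agmon angle, as in [5]: under the invertibility hypothesis,
$$
\frac{d}{dv}\log\Det_{2\theta}\B^{2}_{q,{\widetilde {\mathcal P}}_{0}}(v)\ =\ \mathrm{LIM}_{t\to 0^{+}}\ \Tr\Bigl(\dot{\B^{2}_{q}}\,\bigl(\B^{2}_{q,{\widetilde {\mathcal P}}_{0}}(v)\bigr)^{-1}e^{-t\B^{2}_{q,{\widetilde {\mathcal P}}_{0}}(v)}\Bigr),
$$
where $\mathrm{LIM}$ denotes the constant term of the small--$t$ asymptotic expansion. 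Because $\dot{\B^{2}_{q}}$ vanishes on a collar of $Y$, one can, on the support of $\dot{\B^{2}_{q}}$ and modulo $O(t^{\infty})$, replace the Schwartz kernel of $\bigl(\B^{2}_{q,{\widetilde {\mathcal P}}_{0}}\bigr)^{-1}e^{-t\B^{2}_{q,{\widetilde {\mathcal P}}_{0}}}$ by the interior heat kernel, equivalently by the kernel of the corresponding operator on the closed double ${\widetilde M}$; this is a parametrix argument of the kind used in Section 3, relying on the off--diagonal estimates of the type (\ref{E:3.444}) and (\ref{E:3.4}). Consequently the constant term above equals $\int_{M}\omega_{q}(v)(x)\,d\mathrm{vol}(x)$ for a local density $\omega_{q}(v)$ determined pointwise by the symbol of $\B^{2}$ near $x$, with all dependence on the boundary condition gone.

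Finally I would assemble the alternating sum:
$$
\frac{d}{dv}\sum_{q=0}^{m}(-1)^{q+1}\cdot q\cdot\log\Det_{2\theta}\B^{2}_{q,{\widetilde {\mathcal P}}_{0}}(v)\ =\ \int_{M}\sum_{q=0}^{m}(-1)^{q+1}\cdot q\cdot\omega_{q}(v)(x)\,d\mathrm{vol}(x).
$$
The integrand is exactly the local density governing the metric variation of the Ray--Singer type torsion attached to the Laplace--type operator $\B^{2}$, and since $\dim M=2r-1$ is odd it vanishes identically by the parity argument of Ray--Singer, in the form extended to flat (possibly non--unitary) coefficient systems by Braverman--Kappeler [5]. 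Hence the derivative is $0$; repeating the argument with ${\widetilde {\mathcal P}}_{0}$ replaced by ${\widetilde {\mathcal P}}_{1}$ gives the remaining assertion. The step I expect to require the most care is the locality claim --- that $\Tr\bigl(\dot{\B^{2}_{q}}(\B^{2}_{q,{\widetilde {\mathcal P}}_{0}})^{-1}e^{-t\B^{2}_{q,{\widetilde {\mathcal P}}_{0}}}\bigr)$ receives no boundary contribution and has the same small--$t$ expansion as on ${\widetilde M}$ --- together with the verification that the zeta--determinant variation formula is valid for the non--self--adjoint operator $\B^{2}_{q,{\widetilde {\mathcal P}}_{0}}$; both are handled by the parametrix methods of Section 3 combined with [5].
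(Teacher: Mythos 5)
Your overall strategy --- observe that $\dot g^{M}_{v}$ vanishes near $Y$, localize to the interior, and invoke the odd--dimensional parity vanishing that already works on a closed manifold --- is the same strategy the paper uses when it reduces to Lemma 9.9 of Braverman--Kappeler [5]. However, the mechanism you give for locality contains a genuine gap. You assert that the Schwartz kernel of $(\B^{2}_{q,\widetilde{\mathcal P}_{0}})^{-1}e^{-t\B^{2}_{q,\widetilde{\mathcal P}_{0}}}$ may be replaced, on the support of $\dot{\B^{2}_{q}}$ and modulo $O(t^{\infty})$, by the interior (closed--double) kernel, and conclude that $\frac{d}{dv}\log\Det_{2\theta}\B^{2}_{q,\widetilde{\mathcal P}_{0}}(v)$ is a local integral insensitive to the boundary condition. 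Neither claim is correct. Writing
$\Tr\bigl(\dot{\B^{2}_{q}}(\B^{2}_{q})^{-1}e^{-t\B^{2}_{q}}\bigr)=\int_{t}^{\infty}\Tr\bigl(\dot{\B^{2}_{q}}e^{-r\B^{2}_{q}}\bigr)\,dr$
makes the problem visible: the small--$r$ portion of the integral localizes by your parametrix argument, but the tail $\int_{1}^{\infty}$ is governed by the whole spectrum of $\B^{2}_{q,\widetilde{\mathcal P}_{0}}$ and therefore by the boundary condition, and it contributes an $O(1)$ term, not $O(t^{\infty})$. Equivalently, $\frac{d}{dv}\log\Det_{2\theta}\B^{2}_{q}=\zeta(0,\dot{\B^{2}_{q}}(\B^{2}_{q})^{-1},\B^{2}_{q})$, and because $\dot{\B^{2}_{q}}(\B^{2}_{q})^{-1}$ is a genuine zeroth--order $\Psi$DO (not a differential operator), the constant term of the corresponding heat trace carries a nonlocal, smoothing--operator part coming from the resolvent factor; already for $\Delta_{0}+m^{2}$ on a circle of length $L$ the variation $\frac{d}{dL}\log\Det(\Delta_{0}+m^{2})=m\coth(mL/2)$ is not of the form $\int_{S^{1}_{L}}\omega\,dx$, so the variation of a single $\log\Det$ is not a local integral.

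What actually makes the argument ``purely local,'' and what Lemma 9.9 of [5] does, is that the alternating sum is rewritten algebraically \emph{before} taking the $t\to 0$ limit. Using that only $\Gamma$ depends on the metric, the decompositions $\Omega^{q,\infty}_{\widetilde{\mathcal P}_{0},\pm}$, and the isospectrality supplied by $\Gamma$ and $\nabla$ (cf.\ Lemma~\ref{Lemma:2.12} and Corollary~\ref{Corollary:2.13}, which are exactly what guarantee that this manipulation is compatible with the boundary conditions), one converts $\sum_{q}(-1)^{q+1}q\,\frac{d}{dv}\zeta_{\B^{2}_{q,\widetilde{\mathcal P}_{0}}}(s)$ into $-s$ times the Mellin transform of a trace $\Tr\bigl(\alpha\, e^{-t\B^{2}_{\widetilde{\mathcal P}_{0}}}\bigr)$ in which $\alpha$ is a bundle endomorphism built from $\Gamma\dot\Gamma$ and supported where $\dot g^{M}_{v}$ is. For a zeroth--order differential operator $\alpha$ the constant term of $\Tr(\alpha\,e^{-t\B^{2}})$ really is a local integral, your parametrix argument then correctly removes all boundary dependence, and the odd--dimensional parity makes the density vanish. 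So you should replace the per--$q$ locality claim with the Ray--Singer/Braverman--Kappeler algebraic manipulation and apply the parametrix step to the resulting $\Tr(\alpha\,e^{-t\B^{2}})$; that is what the paper means by ``the argument of Lemma 9.9 in [5] is purely local, verbatim repetition gives the proof.''
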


\begin{proof}
The metric $g_{v}^{M}$ varies only in the interior of the manifold, which shows that
the boundary condition does not change and  the derivative of $g_{v}$ vanishes near the boundary.
Since the argument of Lemma 9.9 in \cite{BK1} (or in \cite{RS}) is purely local, verbatim repetition gives the proof.
\end{proof}

\vspace{0.2 cm}

We next discuss the variation of the eta invariant $\eta(\B_{\even, {\mathcal P}_{-, {\mathcal L}_{0}}}(v))$ for $- \delta < v < \delta$.
We are going to follow the arguments in \cite{Mu}.
We choose $c > 0$ such that (i) any generalized eigenvalue $\kappa$ of $\B_{\even, {\mathcal P}_{-, {\mathcal L}_{0}}}(v)$
with $| \kappa | > c$ satisfies $\re (\kappa^{2}) > 0$ and (ii)
$\B_{\even, {\mathcal P}_{-, {\mathcal L}_{0}}}(v)$ does not have any generalized eigenvalues of absolute value $c$ for $- \delta_{0} < v < \delta_{0}$,
$0 < \delta_{0} \leq \delta$.
Let $K(v)$ be the subspace of $\Omega^{\bullet} (M, E)$ spanned by the generalized eigenforms with generalized eigenvalues
$\kappa$ with $|\kappa| \leq c$.
Theorem \ref{Theorem:2.22} shows that $K(v)$ is a finite dimensional vector space with a constant dimension with respect to $v$
for $- \delta_{0} < v < \delta_{0}$.
We denote by $Q(v)$ the spectral projection onto $K(v)$ and denote $P(v) = I - Q(v)$.
Then $Q(v)$ and $P(v)$ are smooth $1$-parameter families of projections.
We define
$$
{\widetilde \eta}(s, v) = \frac{1}{\Gamma(\frac{s+1}{2})} \int_{0}^{\infty} t^{\frac{s-1}{2}} \Tr \left( P(v) \B_{\even}(v)
e^{- t \B_{\even, {\mathcal P}_{-, {\mathcal L}_{0}}}(v)^{2}} \right) dt.
$$
For fixed $v$, $\hspace{0.1 cm} \eta_{\B_{\even, {\mathcal P}_{-, {\mathcal L}_{0}}}(v)} (s) - {\widetilde \eta}(s, v) \hspace{0.1 cm}$ is an entire function of $s$ and hence
${\widetilde \eta}(s, v)$ has a regular value at $s=0$. Moreover,
$\left( \eta_{\B_{\even, {\mathcal P}_{-, {\mathcal L}_{0}}}(v)} (s) - {\widetilde \eta}(s, v) \right)|_{s=0}$ does not depend on $v$
modulo $2 {\Bbb Z}$.
To compute the variation of ${\widetilde \eta}(s, v)$ we note that

\begin{eqnarray} \label{E:4.22}
& & \frac{\partial}{\partial v} \Tr \left( P(v) \B_{\even}(v) e^{- t \B_{\even, {\mathcal P}_{-, {\mathcal L}_{0}}}(v)^{2}} \right) \hspace{0.1 cm}
= \hspace{0.1 cm} \Tr \left( \dot{P}(v) \B_{\even}(v) e^{- t \B_{\even, {\mathcal P}_{-, {\mathcal L}_{0}}}(v)^{2}} \right)   \nonumber \\
& & \hspace{0.2 cm} + \hspace{0.1 cm}
\Tr \left( P(v) \dot{\B}_{\even}(v) e^{- t \B_{\even, {\mathcal P}_{-, {\mathcal L}_{0}}}(v)^{2}} \right) \hspace{0.1 cm}
+ \hspace{0.1 cm}  \Tr \left( P(v) \B_{\even}(v) \frac{\partial}{\partial v} e^{- t \B_{\even, {\mathcal P}_{-, {\mathcal L}_{0}}}(v)^{2}} \right),
\end{eqnarray}

\noindent
where $\dot{P}(v)$ and $\dot{\B}_{\even}(v)$ denote the derivatives of  $P(v)$ and $\B_{\even}(v)$ with respect to $v$.
Since $\dot{P}(v) = - \dot{Q}(v)$, $P(v) Q(v) = 0$, $\dot{P}(v) = \dot{P}(v) P(v) + P(v) \dot{P}(v)$, $\dot{Q}(v) = \dot{Q}(v) Q(v) + Q(v) \dot{Q}(v)$
and $\B_{\even}(v)$ commutes with $P(v)$ and $Q(v)$, we have

\begin{eqnarray}  \label{E:4.23}
& &  \Tr \left( \dot{P}(v) \B_{\even}(v) e^{- t \B_{\even, {\mathcal P}_{-, {\mathcal L}_{0}}}(v)^{2}} \right)
\hspace{0.1 cm} = \hspace{0.1 cm} 2 \hspace{0.1 cm} \Tr \left( \dot{P}(v) \B_{\even}(v) e^{- t \B_{\even, {\mathcal P}_{-, {\mathcal L}_{0}}}(v)^{2}} P(v) \right)  \nonumber  \\
& = & - \hspace{0.1 cm} 2 \hspace{0.1 cm} \Tr \left( \dot{Q}(v) \B_{\even}(v) e^{- t \B_{\even, {\mathcal P}_{-, {\mathcal L}_{0}}}(v)^{2}} P(v) \right)
\hspace{0.1 cm} = \hspace{0.1 cm}  - \hspace{0.1 cm} 2 \hspace{0.1 cm} \Tr \left( \dot{Q}(v) Q(v) \B_{\even}(v) e^{- t \B_{\even, {\mathcal P}_{-, {\mathcal L}_{0}}}(v)^{2}} P(v) \right)   \nonumber  \\
& & - \hspace{0.1 cm} 2 \hspace{0.1 cm} \Tr \left( Q(v) \dot{Q}(v) \B_{\even}(v) e^{- t \B_{\even, {\mathcal P}_{-, {\mathcal L}_{0}}}(v)^{2}} P(v) \right)
\hspace{0.1 cm} = \hspace{0.1 cm}  0.
\end{eqnarray}

\noindent
The following formula is well-known (cf. \cite{Mu}).
\begin{eqnarray}  \label{E:4.24}
& & \frac{\partial}{\partial v} e^{- t \B_{\even, {\mathcal P}_{-, {\mathcal L}_{0}}}(v)^{2}}  \nonumber  \\
& = & - \int_{0}^{t} e^{- (t - r) \B_{\even, {\mathcal P}_{-, {\mathcal L}_{0}}}(v)^{2}} ( \dot{\B}_{\even}(v) \B_{\even}(v) +
\B_{\even}(v) \dot{\B}_{\even}(v) ) e^{- r \B_{\even, {\mathcal P}_{-, {\mathcal L}_{0}}}(v)^{2}} dr.
\end{eqnarray}
Hence, we have
\begin{equation}
\frac{\partial}{\partial v} \Tr \left( P(v) \B_{\even}(v) e^{- t \B_{\even, {\mathcal P}_{-, {\mathcal L}_{0}}}(v)^{2}} \right) \hspace{0.1 cm}
= \hspace{0.1 cm} (1 + 2t \frac{d}{dt} )
\Tr \left( P(v) \dot{\B}_{\even}(v) e^{- t \B_{\even, {\mathcal P}_{-, {\mathcal L}_{0}}}(v)^{2}} \right) \hspace{0.1 cm},
\end{equation}
which leads to

\begin{equation} \label{E:4.25}
\frac{\partial}{\partial v} {\widetilde \eta}(s, v) \hspace{0.1 cm} = \hspace{0.1 cm}
- \frac{s}{\Gamma(\frac{s+1}{2})} \int_{0}^{\infty} t^{\frac{s-1}{2}}
\Tr \left( P(v) \dot{\B}_{\even}(v) e^{- t \B_{\even, {\mathcal P}_{-, {\mathcal L}_{0}}}(v)^{2}} \right) dt.
\end{equation}

\noindent
By the same way as in Section $3$, there exists an asymptotic expansion for $t \rightarrow 0^{+}$
\begin{equation} \label{E:4.26}
\Tr \left( P(v) \dot{\B}_{\even}(v) e^{- t \B_{\even, {\mathcal P}_{-, {\mathcal L}_{0}}}(v)^{2}} \right) \hspace{0.1 cm} \sim \hspace{0.1 cm}
\sum_{j=0}^{\infty} {\widetilde c}_{j}(v) t^{- \frac{m+1-j}{2}},
\end{equation}
which shows that $\frac{\partial}{\partial v} {\widetilde \eta}(s, v)$ has a regular value at $s=0$ and
\begin{equation} \label{E:4.27}
\frac{\partial}{\partial v} {\widetilde \eta}(s, v)|_{s=0} \hspace{0.1 cm} = \hspace{0.1 cm}
- \frac{2 {\widetilde c}_{m}(v)}{\sqrt{\pi}}.
\end{equation}
Setting
\begin{equation} \label{E:4.28}
\Tr \left( \dot{\B}_{\even}(v) e^{- t \B_{\even, {\mathcal P}_{-, {\mathcal L}_{0}}}(v)^{2}} \right) \hspace{0.1 cm} \sim \hspace{0.1 cm}
\sum_{j=0}^{\infty} c_{j}(v) t^{- \frac{m+1-j}{2}},
\end{equation}
we have ${\widetilde c}_{j}(v) = c_{j}(v)$ for $0 \leq j \leq m$.
Hence we have the following result.

\vspace{0.2 cm}

\begin{lemma} \label{Lemma:4.20}
Let $\B_{\even, {\mathcal P}_{-, {\mathcal L}_{0}}}(v)$ be a smooth $1$-parameter family of odd signature operators which does not vary
near the boundary and have the same Agmon angle $\theta$ ($- \frac{\pi}{2} < \theta < 0$). Then :
$$
\frac{\partial}{\partial v} \eta(\B_{\even, {\mathcal P}_{-, {\mathcal L}_{0}}}(v)) = - \frac{c_{m}(v)}{\sqrt{\pi}} \qquad
(\Mod \hspace{0.1 cm} {\Bbb Z}),
$$
where $c_{m}(v)$ is the coefficient of $t^{-\frac{1}{2}}$ in (\ref{E:4.28}).
\end{lemma}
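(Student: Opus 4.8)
The plan is to assemble the ingredients already prepared in the paragraphs immediately preceding the statement; almost all of the analytic work has been done there, and what remains is to combine it. First I would record the reduction: it was shown above that $\eta_{\B_{\even, {\mathcal P}_{-, {\mathcal L}_{0}}}(v)}(s) - {\widetilde \eta}(s, v)$ is entire in $s$ and that its value at $s = 0$ is independent of $v$ modulo $2{\Bbb Z}$, which by Definition \ref{Definition:3.51} is the same as saying that $\eta(\B_{\even, {\mathcal P}_{-, {\mathcal L}_{0}}}(v)) - \frac{1}{2}{\widetilde \eta}(0, v)$ is constant in $v$ modulo ${\Bbb Z}$. Differentiating this identity, it suffices to prove that $\frac{\partial}{\partial v}{\widetilde \eta}(0, v) = - 2 c_{m}/\sqrt{\pi}$, where $c_{m}$ is the coefficient in (\ref{E:4.28}).

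Next I would carry out the variation computation, which is the content of (\ref{E:4.22})--(\ref{E:4.25}). Differentiating $\Tr(P(v)\B_{\even}(v)e^{-t\B_{\even, {\mathcal P}_{-, {\mathcal L}_{0}}}(v)^{2}})$ in $v$ produces three terms; the $\dot{P}(v)$-term vanishes by (\ref{E:4.23}), using $\dot{P}(v) = \dot{P}(v)P(v) + P(v)\dot{P}(v)$, $P(v)Q(v) = 0$, the analogous identities for $\dot{Q}(v)$, and the fact that $\B_{\even}(v)$ commutes with $P(v)$ and $Q(v)$. Duhamel's formula (\ref{E:4.24}), the trace identity $\Tr(AB) = \Tr(BA)$, and an integration by parts in the time variable then collapse the remaining two terms into $(1 + 2t\frac{d}{dt})\Tr(P(v)\dot{\B}_{\even}(v)e^{-t\B_{\even, {\mathcal P}_{-, {\mathcal L}_{0}}}(v)^{2}})$, and inserting this into the Mellin integral defining ${\widetilde \eta}(s, v)$ gives (\ref{E:4.25}).

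Then I would establish the small-$t$ asymptotic expansion (\ref{E:4.26}) together with the identity ${\widetilde c}_{j} = c_{j}$ for $0 \le j \le m$. Because the metrics $g_{v}^{M}$ agree near $Y$, the derivative $\dot{\B}_{\even}(v)$ is a differential operator supported in the interior of $M$, so the parametrix construction of Section 3 applies verbatim: the collar contribution disappears and one obtains an interior-type expansion $\sum_{j \ge 0}{\widetilde c}_{j}\, t^{-(m+1-j)/2}$. Since $Q(v)$ has finite rank, $\Tr(Q(v)\dot{\B}_{\even}(v)e^{-t\B_{\even, {\mathcal P}_{-, {\mathcal L}_{0}}}(v)^{2}})$ is a finite sum of terms of the form $t^{\ell}e^{-t\kappa^{2}}$ with $\ell \ge 0$, hence $O(1)$ as $t \to 0^{+}$, so it affects only the $t^{0}$ coefficient and ${\widetilde c}_{j} = c_{j}$ for $0 \le j \le m$. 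The standard Mellin argument then finishes it: the integral in (\ref{E:4.25}) has a simple pole at $s = 0$ with residue $2{\widetilde c}_{m}$, while $\Gamma(\frac{s+1}{2})^{-1}$ is regular there with value $1/\sqrt{\pi}$, so $\frac{\partial}{\partial v}{\widetilde \eta}(0, v) = - 2c_{m}/\sqrt{\pi}$ (this is (\ref{E:4.27}), using ${\widetilde c}_m = c_m$), and combined with the first step this yields $\frac{\partial}{\partial v}\eta(\B_{\even, {\mathcal P}_{-, {\mathcal L}_{0}}}(v)) = - c_{m}/\sqrt{\pi}$ modulo ${\Bbb Z}$.

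The main obstacle is making the heat-kernel manipulations of the second and third steps rigorous under the well-posed boundary condition ${\mathcal P}_{-, {\mathcal L}_{0}}$: Duhamel's formula, the integration by parts in $t$, and the existence of the short-time asymptotic expansion must all be justified for the boundary realization rather than on a closed manifold. This is exactly where the hypothesis that $g_{v}^{M}$ is constant near $Y$ is used — it localizes $\dot{\B}_{\even}(v)$ to the interior, annihilates the boundary terms arising in the integration by parts, and reduces the short-time analysis to the interior parametrix of Section 3 — so that the computation becomes a verbatim adaptation of the closed-case argument of [24].
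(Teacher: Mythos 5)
Your proposal is correct and follows the same route as the paper: reduce to the variation of the auxiliary eta function $\widetilde\eta(s,v)$ via the fact that $\eta(\B_{\even, \mathcal{P}_{-,\mathcal{L}_0}}(v)) - \tfrac12\widetilde\eta(0,v)$ is constant mod $\mathbb{Z}$, carry out the Duhamel/integration-by-parts computation (4.22)–(4.25), read off the $s=0$ value from the residue of the Mellin integral, and identify $\widetilde c_m$ with $c_m$. Your explanation of why $\widetilde c_j = c_j$ for $0\le j\le m$ — that the discrepancy $\Tr(Q(v)\dot\B_{\even}(v)e^{-t\B^2})$ is a finite sum of $t^{\ell}e^{-t\kappa^2}$ terms, hence $O(1)$ as $t\to 0^+$, so it cannot contribute to the coefficients of negative powers of $t$ — fills in a justification the paper leaves implicit, and your closing remark about the role of the hypothesis that $g^M_v$ is fixed on a collar (localizing $\dot\B_{\even}$ to the interior so that the Section 3 parametrix argument applies without new boundary terms) is exactly the point that makes the closed-manifold argument of [24] transfer to the boundary realization.
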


\vspace{0.2 cm}

To cancel the metric dependence of the eta invariant we consider the following odd signature operator
$\B^{\trivial}_{\even, {\mathcal P}_{-, {\mathcal L}_{0}}}(v) : \Omega^{\even, \infty}_{{\mathcal P}_{-, {\mathcal L}_{0}}} (M, {\Bbb C}) \rightarrow
\Omega^{\even, \infty}_{{\mathcal P}_{-, {\mathcal L}_{0}}} (M, {\Bbb C})$ obtained by the trivial connection $\nabla^{\trivial}$
on the trivial line bundle $M \times {\Bbb C} \rightarrow M$
with respect to the metric $g_{v}$, $- \delta < v < \delta$.
Since $\B^{\trivial}_{\even, {\mathcal P}_{-, {\mathcal L}_{0}}} (v)$ is a self-adjoint operator,
we define the eta invariant $\eta(\B^{\trivial}_{\even, {\mathcal P}_{-, {\mathcal L}_{0}}} (v))$ for
$\B^{\trivial}_{\even, {\mathcal P}_{-, {\mathcal L}_{0}}} (v)$ by
\begin{eqnarray}   \label{E:4.44}
\eta(\B^{\trivial}_{\even, {\mathcal P}_{-, {\mathcal L}_{0}}} (v)) =
\frac{1}{2} \left( \eta_{\B^{\trivial}_{\even, {\mathcal P}_{-, {\mathcal L}_{0}}} (v)} (0)
+ \dim \ker \B^{\trivial}_{\even, {\mathcal P}_{-, {\mathcal L}_{0}}}(v) \right).
\end{eqnarray}

\noindent
Lemma \ref{Lemma:2.11} shows that $\dim \ker \B^{\trivial}_{\even, {\mathcal P}_{-, {\mathcal L}_{0}}}(v)$ is a topological invariant
and hence it does not depend on the metric $g_{v}^{M}$.
Lemma \ref{Lemma:4.20} implies that

$$
\rank(E) \cdot \frac{d}{dv} \eta (\B^{\trivial}_{\even, {\mathcal P}_{-, {\mathcal L}_{0}}}(v)) \hspace{0.1 cm} = \hspace{0.1 cm}
\frac{1}{2} \hspace{0.1 cm} \rank(E) \cdot \frac{d}{dv} \left( \eta_{\B^{\trivial}_{\even, {\mathcal P}_{-, {\mathcal L}_{0}}} (v)} (0) \right)
\hspace{0.1 cm} = \hspace{0.1 cm} - \rank(E) \cdot \frac{c_{m}^{\trivial}(v)}{\sqrt{\pi}},
$$

\noindent
where $c_{m}^{\trivial}(v)$ is the coefficient of $t^{-\frac{1}{2}}$ in (\ref{E:4.28}) for $\B^{\trivial}_{\even, {\mathcal P}_{-, {\mathcal L}_{0}}}(v)$.
Since $\dot{\B}_{\even {\mathcal P}_{-, {\mathcal L}_{0}}}(v)$ vanishes near the boundary, the asymptotic expansion (\ref{E:4.28}) is determined completely
by the interior data (cf. Section 3).
Since the coefficients $c_{m}(v)$ and $c_{m}^{\trivial}(v)$ are given locally, we have $c_{m}(v) = \rank(E) \cdot c_{m}^{\trivial}(v)$ and hence
we have the following corollary.

\vspace{0.2 cm}

\begin{corollary} \label{Corollary:4.9}
$$
\eta (\B_{\even, {\mathcal P}_{-, {\mathcal L}_{0}}}) -
\rank(E) \cdot \frac{1}{2} \eta_{\B^{\trivial}_{\even, {\mathcal P}_{-, {\mathcal L}_{0}}}}(0)
$$
is invariant on modulo ${\Bbb Z}$ under the change of metrics in the interior of $M$.
\end{corollary}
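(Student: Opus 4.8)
The plan is to differentiate both terms of the difference with respect to the metric parameter $v$ and show that the two $v$-derivatives agree modulo ${\Bbb Z}$, so that the difference is locally constant along any deformation of the metric supported in the interior. The point is that the $v$-derivative of an eta invariant is governed by a \emph{local} heat coefficient, and, a flat bundle being locally trivial, that coefficient cannot tell $E$ apart from the trivial bundle of the same rank.

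First I would apply Lemma \ref{Lemma:4.20} to the family $\B_{\even, {\mathcal P}_{-, {\mathcal L}_0}}(v)$, obtaining $\frac{d}{dv}\,\eta(\B_{\even, {\mathcal P}_{-, {\mathcal L}_0}}(v)) = -\,c_m/\sqrt{\pi} \pmod{{\Bbb Z}}$, with $c_m$ the coefficient of $t^{-1/2}$ in the expansion (\ref{E:4.28}) attached to $\rho$. Next I would apply the same lemma to the trivial self-adjoint family $\B^{\trivial}_{\even, {\mathcal P}_{-, {\mathcal L}_0}}(v)$ associated with $\nabla^{\trivial}$; since $\dim \ker \B^{\trivial}_{\even, {\mathcal P}_{-, {\mathcal L}_0}}(v)$ equals $\dim H^{\bullet}(M;{\Bbb C}^n)$ by Lemma \ref{Lemma:2.11} and so is independent of $v$, this gives $\rank(E)\cdot\tfrac{1}{2}\,\frac{d}{dv}\,\eta_{\B^{\trivial}_{\even, {\mathcal P}_{-, {\mathcal L}_0}}(v)}(0) = -\,c_m^{\trivial}/\sqrt{\pi}$, where $c_m^{\trivial}$ is the corresponding $t^{-1/2}$-coefficient for the trivial representation $\rho_{\trivial}:\pi_1(M)\to GL(n,{\Bbb C})$. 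The hypotheses of Lemma \ref{Lemma:4.20} hold for $\B^{\trivial}$ for free: $\nabla^{\trivial}$ is fixed and $g_v^M$ is a product metric that is constant on $[0,\epsilon)\times Y$, so this family does not vary near $Y$ and can be taken with a common Agmon angle.

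The decisive step is the equality $c_m = c_m^{\trivial}$. Because $g_v^M$ is constant near $Y$, the variation $\dot{\B}_{\even}(v)$ is supported in the interior of $M$; the parametrix construction of Section 3 --- splitting $M$ into the half-cylinder and the double --- then shows, as in the arguments leading to (\ref{E:3.24}) and (\ref{E:4.26}), that the $t^{-1/2}$-coefficient of $\Tr\!\big(\dot{\B}_{\even}(v)\,e^{-t\B_{\even, {\mathcal P}_{-, {\mathcal L}_0}}(v)^2}\big)$ receives no contribution from a neighbourhood of $Y$ and is the integral over $M$ of a density built pointwise from the full symbols of $\B$, $\dot{\B}$ and finitely many of their derivatives. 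Since $(E,\nabla)$ is locally isomorphic to $({\Bbb C}^n,d)$, on any coordinate chart $\B_{\even}$ and $\dot{\B}_{\even}$ coincide with $\B^{\trivial}_{\even}\otimes Id_n$ and $\dot{\B}^{\trivial}_{\even}\otimes Id_n$; taking fibrewise traces, the two local densities agree pointwise, whence $c_m = c_m^{\trivial}$. Therefore $\frac{d}{dv}\big(\eta(\B_{\even, {\mathcal P}_{-, {\mathcal L}_0}}(v)) - \rank(E)\cdot\tfrac{1}{2}\,\eta_{\B^{\trivial}_{\even, {\mathcal P}_{-, {\mathcal L}_0}}(v)}(0)\big) = -\,c_m/\sqrt{\pi} + c_m^{\trivial}/\sqrt{\pi} = 0 \pmod{{\Bbb Z}}$, and integrating over a path of interior metrics gives the statement.

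The main obstacle, and the only genuinely delicate point, is the modulo-${\Bbb Z}$ bookkeeping in the two applications of Lemma \ref{Lemma:4.20}. One must keep track of the integer jumps that occur when generalized eigenvalues of $\B_{\even, {\mathcal P}_{-, {\mathcal L}_0}}(v)$ cross the imaginary axis and of the variation of the finite-dimensional ``small eigenvalue'' summand $K(v)$ --- exactly the spectral-projection analysis carried out just before Lemma \ref{Lemma:4.20} --- and verify that these combine with the integer correction terms in Definition \ref{Definition:3.51} so that only the smooth (local) part survives. The remaining ingredients --- regularity of the eta functions at $s=0$ from Lemma \ref{Lemma:3.1}, the small-time expansion (\ref{E:4.26}), and the locality of $c_m$ --- are already in hand.
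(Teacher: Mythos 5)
Your proposal follows essentially the same route as the paper's own argument: apply Lemma \ref{Lemma:4.20} to both the flat connection and the trivial connection families, use Lemma \ref{Lemma:2.11} to pass from $\frac{d}{dv}\eta(\B^{\trivial})$ to $\frac{1}{2}\frac{d}{dv}\eta_{\B^{\trivial}}(0)$ since $\dim\ker\B^{\trivial}_{\even,{\mathcal P}_{-,{\mathcal L}_0}}(v)$ is metric-independent, and then invoke locality of the $t^{-1/2}$-coefficient together with the local triviality of $(E,\nabla)$ to conclude $c_m = c_m^{\trivial}$. The paper handles the modulo-${\Bbb Z}$ issue entirely through Lemma \ref{Lemma:4.20}'s statement, so the concern you raise in your last paragraph is already absorbed there rather than requiring further spectral bookkeeping.
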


\vspace{0.2 cm}

Now we are ready to define the refined analytic torsion on a compact manifold with boundary when $\nabla$ satisfies the Assumption $\I$ and $\II$.
\vspace{0.2 cm}

\begin{definition}  \label{Definition:4.10}
Let $(M, Y, g^{M})$ be a compact Riemannian manifold with boundary $Y$, where $g^{M}$ is a product metric near the boundary.
We denote by $E$ the complex flat bundle with the flat connection $\nabla$ associated to the representation
$\rho : \pi_{1}(M) \rightarrow GL(n, {\Bbb C})$.
We assume the Assumption A made in Subsection 2.3.
We also assume that $\nabla$ satisfies the Assumption $\I$ and $\II$.
We choose an angle $\theta$ with $- \frac{\pi}{2} < \theta < 0$ such that $\theta$ is an Agmon angle for $\B_{\even, {\mathcal P}_{-, {\mathcal L}_{0}}}$
and $2 \theta$ for each $\B^{2}_{q, {\mathcal P}_{-, {\mathcal L}_{0}}}$.
Then we define the refined analytic torsion $T_{{\mathcal P}_{-, {\mathcal L}_{0}}}(g^{M}, \nabla)$ by
$$
T_{{\mathcal P}_{-, {\mathcal L}_{0}}}(g^{M}, \nabla) \hspace{0.1 cm} =
\hspace{0.1 cm} \Det_{\gr, \theta} \B_{\even, {\mathcal P}_{-, {\mathcal L}_{0}}} \cdot
e^{\frac{\pi i}{2} (\rank E) \cdot \eta_{{\B^{\trivial}_{\even, {\mathcal P}_{-, {\mathcal L}_{0}}}}}(0)}.
$$
Similarly,
we define $T_{{\mathcal P}_{+, {\mathcal L}_{1}}}(g^{M}, \nabla)$ by
$$
T_{{\mathcal P}_{+, {\mathcal L}_{1}}}(g^{M}, \nabla) \hspace{0.1 cm} =
\hspace{0.1 cm} \Det_{\gr, \theta} \B_{\even, {\mathcal P}_{+, {\mathcal L}_{1}}}
\cdot e^{\frac{\pi i}{2} (\rank E) \cdot \eta_{{\B^{\trivial}_{\even, {\mathcal P}_{+, {\mathcal L}_{1}}}}}(0)}.
$$
\end{definition}

\vspace{0.2 cm}

\begin{theorem}  \label{Theorem:4.11}
We assume the same assumptions as in Definition \ref{Definition:4.10}.
Then the refined analytic torsion $T_{{\mathcal P}_{-, {\mathcal L}_{0}}}(g^{M}, \nabla)$ and
$T_{{\mathcal P}_{+, {\mathcal L}_{1}}}(g^{M}, \nabla)$  are independent of the choice of Agmon angles and invariant
under the change of metrics among metrics which satisfy the Assumption $\I$ and $\II$ and are fixed
as a product metric on some collar neighborhood of $Y$.
\end{theorem}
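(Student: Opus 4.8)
The plan is to reduce the statement to Corollary \ref{Corollary:4.5}, Lemma \ref{Lemma:4.6} and Corollary \ref{Corollary:4.9}, combined with the eigenvalue estimate of Theorem \ref{Theorem:2.22}, and to treat the Agmon-angle independence and the interior-metric independence in turn. By Definition \ref{Definition:4.10} together with Corollary \ref{Corollary:4.5}(1), under the Assumptions A, B, I, II one has
\begin{align*}
\log T_{{\mathcal P}_{-,{\mathcal L}_0}}(g^{M},\nabla)
&=\ \frac{1}{2}\sum_{q=0}^{m}(-1)^{q+1}\,q\,\log\Det_{2\theta}\B^{2}_{q,{\widetilde {\mathcal P}_{0}}}
\ -\ i\pi\,\eta(\B_{\even,{\mathcal P}_{-,{\mathcal L}_0}})
\ +\ \frac{\pi i}{2}(\rank E)\,\eta_{\B^{\trivial}_{\even,{\mathcal P}_{-,{\mathcal L}_0}}}(0) \\
&\quad\ +\ \frac{\pi i}{8}\sum_{q=0}^{m-1}\zeta_{\B_{Y,q}^{2}}(0)
\ +\ \frac{\pi i}{2}\sum_{q=0}^{r-2}(r-1-q)\,(l_{q}^{+}-l_{q}^{-}),
\end{align*}
and an analogous identity for $T_{{\mathcal P}_{+,{\mathcal L}_1}}$ via Corollary \ref{Corollary:4.5}(2). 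The two sums on the second line are built only from the tangential operators $\B_{Y,q}^{2}$ and from the Lagrangian splitting (\ref{E:2.49}) of ${\mathcal H}^{\bullet}_{\nabla}(Y,E|_{Y})$; since $g^{M}$ is a fixed product metric on a collar of $Y$ and $\zeta_{\B_{Y,q}^{2}}(0)$ and the numbers $l_{q}^{\pm}$ do not involve the Agmon angle, these terms are constant in $\theta$ and along any admissible family of interior metrics. Thus both assertions reduce to the first line.

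\emph{Independence of the Agmon angle.} By Definition \ref{Definition:4.3}, $\Det_{\gr,\theta}\B_{\even,{\mathcal P}_{-,{\mathcal L}_0}}=\Det_{\theta}\B^{+}_{\even,{\mathcal P}_{-,{\mathcal L}_0}}/\Det_{\theta}(-\B^{-}_{\even,{\mathcal P}_{-,{\mathcal L}_0}})$. By Theorem \ref{Theorem:2.22}(1) every generalized eigenvalue $\lambda$ of $\B^{\pm}_{\even,{\mathcal P}_{-,{\mathcal L}_0}}$ satisfies $|\Imm\lambda|\le{\mathcal N}_0$, so for any two admissible angles $\theta,\theta'\in(-\tfrac{\pi}{2},0)$ the closed sector between $R_{\theta}$ and $R_{\theta'}$ contains only finitely many of them; hence $\log\Det_{\theta}\B^{+}_{\even,{\mathcal P}_{-,{\mathcal L}_0}}$ and $\log\Det_{\theta}(-\B^{-}_{\even,{\mathcal P}_{-,{\mathcal L}_0}})$ each change only by an integer multiple of $2\pi i$ as the angle varies, and $\Det_{\gr,\theta}\B_{\even,{\mathcal P}_{-,{\mathcal L}_0}}$ is unchanged. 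Since $\eta_{\B^{\trivial}_{\even,{\mathcal P}_{-,{\mathcal L}_0}}}(0)$ is manifestly angle-independent, $T_{{\mathcal P}_{-,{\mathcal L}_0}}(g^{M},\nabla)$ does not depend on the chosen $\theta$; the same argument applies to $T_{{\mathcal P}_{+,{\mathcal L}_1}}$.

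\emph{Metric independence.} Let $\{g_{v}^{M}\}_{-\delta<v<\delta}$ be as in the statement and, as in Subsection 4.2 (and using the angle-independence just proved), fix a single $\theta$ that is Agmon for all $\B_{\even,\cdot}(v)$ and $2\theta$ for all $\B^{2}_{q,\cdot}(v)$. Comparing the first line of the identity at parameters $v$ and $0$: the sum $\sum_{q}(-1)^{q+1}q\log\Det_{2\theta}\B^{2}_{q,{\widetilde {\mathcal P}_{0}}}(v)$ has vanishing $v$-derivative by Lemma \ref{Lemma:4.6}, hence is constant on the interval; while $-i\pi\eta(\B_{\even,{\mathcal P}_{-,{\mathcal L}_0}}(v))+\tfrac{\pi i}{2}(\rank E)\eta_{\B^{\trivial}_{\even,{\mathcal P}_{-,{\mathcal L}_0}}(v)}(0)=-i\pi\big(\eta(\B_{\even,{\mathcal P}_{-,{\mathcal L}_0}}(v))-\tfrac{1}{2}(\rank E)\eta_{\B^{\trivial}_{\even,{\mathcal P}_{-,{\mathcal L}_0}}(v)}(0)\big)$, and by Corollary \ref{Corollary:4.9} the quantity in parentheses is independent of $v$ modulo ${\Bbb Z}$. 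Consequently $\log T_{{\mathcal P}_{-,{\mathcal L}_0}}(g_{v}^{M},\nabla)-\log T_{{\mathcal P}_{-,{\mathcal L}_0}}(g_{0}^{M},\nabla)\in i\pi{\Bbb Z}$, i.e.\ $T_{{\mathcal P}_{-,{\mathcal L}_0}}(g_{v}^{M},\nabla)=\pm T_{{\mathcal P}_{-,{\mathcal L}_0}}(g_{0}^{M},\nabla)$. To remove the sign, observe that $v\mapsto T_{{\mathcal P}_{-,{\mathcal L}_0}}(g_{v}^{M},\nabla)$ is continuous and nowhere zero: $\Det_{\gr,\theta}\B_{\even,{\mathcal P}_{-,{\mathcal L}_0}}(v)$ is a ratio of nonvanishing zeta-determinants of operators that stay invertible by Assumption II, and $\eta_{\B^{\trivial}_{\even,{\mathcal P}_{-,{\mathcal L}_0}}(v)}(0)$ is continuous because $\dim\ker\B^{\trivial}_{\even,{\mathcal P}_{-,{\mathcal L}_0}}(v)=\dim\bigoplus_{q\ \even}{\mathcal H}^{q}_{rel}(M,{\Bbb C})$ is a topological invariant by Lemma \ref{Lemma:2.11}, so no eigenvalue of the self-adjoint $\B^{\trivial}_{\even,{\mathcal P}_{-,{\mathcal L}_0}}(v)$ crosses $0$. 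A continuous, nowhere-vanishing function on the connected interval $(-\delta,\delta)$ taking only the two values $\pm T_{{\mathcal P}_{-,{\mathcal L}_0}}(g_{0}^{M},\nabla)$ is constant. The case of $T_{{\mathcal P}_{+,{\mathcal L}_1}}$ is identical, using Corollary \ref{Corollary:4.5}(2).

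The substantive difficulty is really packaged in the results this proof invokes: the delicate point is the cancellation of the metric anomaly of $\eta(\B_{\even,{\mathcal P}_{-,{\mathcal L}_0}})$ against the trivial-connection correction, which rests on knowing that the $s=0$ pole coefficient in the heat expansion (\ref{E:4.28}) is the integral of an interior local density proportional to $\rank E$ --- precisely the content of Lemma \ref{Lemma:3.20}, Lemma \ref{Lemma:4.20} and Corollary \ref{Corollary:4.9}. Granting these, the proof of Theorem \ref{Theorem:4.11} is a bookkeeping of the $\pi i{\Bbb Z}$ and ${\Bbb Z}$ ambiguities together with the continuity argument above; the one place where care is genuinely needed is verifying, as done, that neither the graded determinant nor the trivial eta invariant can jump along the family.
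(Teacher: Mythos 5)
Your proposal is correct and follows essentially the same path as the paper's proof: expand $\log T_{{\mathcal P}_{-,{\mathcal L}_0}}$ via Corollary \ref{Corollary:4.5}, use Lemma \ref{Lemma:4.6} for the determinant part and Corollary \ref{Corollary:4.9} for the eta part to get invariance modulo the $\pi i{\Bbb Z}$ (hence sign) ambiguity, and then remove that ambiguity by the continuity argument — continuity of $\Det_{\gr,\theta}\B_{\even}(v)$ together with the absence of integer jumps of $\eta_{\B^{\trivial}_{\even}(v)}(0)$, which rests on Lemma \ref{Lemma:2.11}. The only cosmetic difference is that you spell out the Agmon-angle independence directly from Theorem \ref{Theorem:2.22}, whereas the paper cites the analogous argument in Subsection 3.10 of reference [5]; the content is the same.
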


\begin{proof}
The independence of the choice of the Agmon angles follows from the argument in the Subsection 3.10 in \cite{BK1}.
Let $\{ g^{M}_{v} \mid  - \delta < v < \delta \}$ be a family of metrics satisfying the Assumption $\I$ and $\II$ and $g^{M}_{v} = g^{M}_{0} (= g^{M})$
on some collar neighborhood of $Y$.
Corollary \ref{Corollary:4.5}, Lemma \ref{Lemma:4.6} and Corollary \ref{Corollary:4.9} imply that
$T_{{\mathcal P}_{-, {\mathcal L}_{0}}}(g^{M}_{v}, \nabla)$ and
$T_{{\mathcal P}_{+, {\mathcal L}_{1}}}(g^{M}_{v}, \nabla)$  are invariant up to modulo ${\Bbb Z}$.
Lemma \ref{Lemma:2.11} shows that the dimensions of $\Ker {\B^{\trivial}_{\even, {\mathcal P}_{-, {\mathcal L}_{0}}}}(v)$ and
$\Ker {\B^{\trivial}_{\even, {\mathcal P}_{+, {\mathcal L}_{1}}}}(v)$ are topological invariants. Hence
$\eta_{{\B^{\trivial}_{\even, {\mathcal P}_{-, {\mathcal L}_{0}}}}(g^{M}_{v})}(0)$ and
$\eta_{{\B^{\trivial}_{\even, {\mathcal P}_{+, {\mathcal L}_{1}}}}(g^{M}_{v})}(0)$ are continuous with respect to $v$ and have no integer jump.
Since $\Det_{\gr, \theta} \B_{\even, {\mathcal P}_{-, {\mathcal L}_{0}}}(v)$ and $\Det_{\gr, \theta} \B_{\even, {\mathcal P}_{+, {\mathcal L}_{1}}}(v)$
are continuous with respect to $v$, the result follows.
\end{proof}

\vspace{0.2 cm}

\noindent
{\it Remark} : Corollary \ref{Corollary:4.5} and Theorem \ref{Theorem:4.1111} below imply that both $T_{{\mathcal P}_{-, {\mathcal L}_{0}}}(g^{M}, \nabla)$ and
$T_{{\mathcal P}_{+, {\mathcal L}_{1}}}(g^{M}, \nabla)$ may depend on a metric on the boundary $Y$.

\vspace{0.2 cm}

\subsection{The case of an acyclic Hermitian connection}
Before finishing this section we include some results obtained in \cite{HL1} and \cite{HL2}.
Here we assume that $\nabla$ is a Hermitian connection {\it i.e.} $\nabla = \nabla^{\prime}$.
The following result was obtained in Theorem 3.11 in \cite{HL2} and Theorem 2.12 in \cite{HL1}.

\begin{theorem}  \label{Theorem:4.1111}
Let $(M, Y, g^{M})$ be a compact Riemannian manifold with boundary $Y$ and $g^{M}$ be a product metric near $Y$.
We assume that $\nabla$ is a Hermitian connection. Then :

\begin{eqnarray*}
& (1) & \sum_{q=0}^{m} (-1)^{q+1} q \cdot \left( \log \Det \B^{2}_{q, {\widetilde {\mathcal P}}_{0}} +
\log \Det \B^{2}_{q, {\widetilde {\mathcal P}}_{1}} \right) \hspace{0.1 cm} = \hspace{0.1 cm}
\sum_{q=0}^{m} (-1)^{q+1} q \cdot \left( \log \Det \B^{2}_{q, \rel} + \log \Det \B^{2}_{q, \Abs}  \right) .
\end{eqnarray*}
\noindent
Moreover, if $H^{q}(M ; E) = H^{q}(M, Y ; E) = \{ 0 \}$ for each $0 \leq q \leq m$, then :
\begin{eqnarray*}
& (2) &  \sum_{q=0}^{m} (-1)^{q+1} \cdot q \cdot \log \Det_{2 \theta} \B_{q, {\widetilde {\mathcal P}}_{0}}^{2}  \hspace{0.1 cm} = \hspace{0.1 cm}
\sum_{q=0}^{m} (-1)^{q+1} \cdot q \cdot \log \Det_{2 \theta} \B_{q, \rel}^{2}
\hspace{0.1 cm} + \hspace{0.1 cm} \frac{1}{4} \sum_{q = 0}^{m-1} \log \Det_{2 \theta} \B_{Y, q}^{2}  \\
& (3) &  \sum_{q=0}^{m} (-1)^{q+1} \cdot q \cdot \log \Det_{2 \theta} \B_{q, {\widetilde {\mathcal P}}_{1}}^{2}  \hspace{0.1 cm} = \hspace{0.1 cm}
\sum_{q=0}^{m} (-1)^{q+1} \cdot q \cdot \log \Det_{2 \theta} \B_{q, \rel}^{2}
\hspace{0.1 cm} - \hspace{0.1 cm} \frac{1}{4} \sum_{q = 0}^{m-1} \log \Det_{2 \theta} \B_{Y, q}^{2}
\end{eqnarray*}
\end{theorem}

\vspace{0.2 cm}

We denote by $\left(\Omega^{\even}(M, E)|_{Y} \right)^{\ast}$ the orthogonal complement of
$\left( \begin{array}{clcr} {\mathcal H}^{\even}(Y, E|_{Y}) \\ {\mathcal H}^{\odd}(Y, E|_{Y}) \end{array} \right)$ in
$\left( \Omega^{\even}(M, E)|_{Y} \right)$, {\it i.e.}

$$
\Omega^{\even}(M, E)|_{Y} \hspace{0.1 cm} = \hspace{0.1 cm}
\left( \Omega^{\even}(M, E)|_{Y} \right)^{\ast} \oplus
\left( \begin{array}{clcr} {\mathcal H}^{\even}(Y, E|_{Y}) \\ {\mathcal H}^{\odd}(Y, E|_{Y}) \end{array} \right),
$$
and denote by ${\mathcal P}_{\ast}$ the orthogonal projection onto $\left(\Omega^{\even}(M, E)|_{Y} \right)^{\ast}$.
We define one parameter families of orthogonal projections ${\widetilde {\frak P}}_{-}(\theta)$,
${\widetilde {\frak P}}_{+}(\theta) : \Omega^{\even}(M, E)|_{Y} \rightarrow \Omega^{\even}(M, E)|_{Y}$ by

\begin{eqnarray*}
{\widetilde {\frak P}}_{-}(\theta) & = & \Pi_{>} \cos \theta + {\mathcal P}_{-} \sin \theta + \frac{1}{2} ( 1 - \cos \theta - \sin \theta ) {\mathcal P}_{\ast} + {\mathcal P}_{{\mathcal L}_{0}} , \\
{\widetilde {\frak P}}_{+}(\theta) & = & \Pi_{>} \cos \theta + {\mathcal P}_{+} \sin \theta + \frac{1}{2} ( 1 - \cos \theta - \sin \theta ) {\mathcal P}_{\ast} + {\mathcal P}_{{\mathcal L}_{1}} ,
\qquad (0 \leq \theta \leq \frac{\pi}{2}),
\end{eqnarray*}

\vspace{0.2 cm}

\noindent
where $\Pi_{>}$ is an orthogonal projection onto the positive eigenspace of ${\mathcal A}$ (cf. (\ref{E:2.888})),
${\mathcal L}_{i}$ ($i = 0, 1$) are chosen by (\ref{E:2.46}), (\ref{E:2.47}) and (\ref{E:2.14}),
and ${\mathcal P}_{{\mathcal L}_{i}}$ are orthogonal projections onto ${\mathcal L}_{i}$.
${\widetilde {\frak P}}_{-}(\theta)$ (${\widetilde {\frak P}}_{+}(\theta)$)
is a smooth curve of orthogonal projections connecting ${\mathcal P}_{-, {\mathcal L}_{0}}$ (${\mathcal P}_{+, {\mathcal L}_{1}}$) and
$\Pi_{>, {\mathcal L}_{0}}$ ($\Pi_{>, {\mathcal L}_{1}}$).
We denote the Calder\'on projector for $\B$ by ${\mathcal C}_{M}$.
We also denote the spectral flow for $(\B_{{\widetilde {\frak P}}_{\pm}(\theta)})_{\theta \in [0, \frac{\pi}{2}]}$ and Maslov index for
$( {\widetilde {\frak P}}_{\pm}(\theta), \hspace{0.1 cm} {\mathcal C}_{M})_{\theta \in [0, \frac{\pi}{2}]} $
by $\SF (\B_{{\widetilde {\frak P}}_{\pm}(\theta)})_{\theta \in [0, \frac{\pi}{2}]}$ and
$ \Mas ( {\widetilde {\frak P}}_{\pm}(\theta), \hspace{0.1 cm} {\mathcal C}_{M})_{\theta \in [0, \frac{\pi}{2}]}$.
We refer to \cite{BW}, \cite{KL} and \cite{Ni} for the definitions of the Calder\'on projector, the spectral flow and Maslov index.
We obtained the following result in Theorem 3.12 in \cite{HL1}.

\begin{theorem}   \label{Theorem:4.1112}
Let $(M, Y, g^{M})$ be a compact Riemannian manifold with boundary $Y$ and $g^{M}$ be a product metric near $Y$.
We assume that $\nabla$ is a Hermitian connection. Then :

\vspace{0.2 cm}

\noindent
(1) $\hspace{0.2 cm} \eta( \B_{{\mathcal P}_{-, {\mathcal L}_{0}}} ) - \eta ( \B_{\Pi_{>, {\mathcal L}_{0}}} )
\hspace{0.1 cm} = \hspace{0.1 cm} \SF (\B_{{\widetilde {\frak P}}_{-}(\theta)})_{\theta \in [0, \frac{\pi}{2}]}
\hspace{0.1 cm} = \hspace{0.1 cm}  \Mas ( {\widetilde {\frak P}}_{-}(\theta), \hspace{0.1 cm} {\mathcal C}_{M})_{\theta \in [0, \frac{\pi}{2}]} $.  \newline
(2) $\hspace{0.2 cm} \eta( \B_{{\mathcal P}_{+, {\mathcal L}_{1}}} ) - \eta ( \B_{\Pi_{>, {\mathcal L}_{1}}} )
\hspace{0.1 cm} = \hspace{0.1 cm}  \SF (\B_{{\widetilde {\frak P}}_{+}(\theta)})_{\theta \in [0, \frac{\pi}{2}]}
\hspace{0.1 cm} = \hspace{0.1 cm} \Mas ( {\widetilde {\frak P}}_{+}(\theta), \hspace{0.1 cm} {\mathcal C}_{M})_{\theta \in [0, \frac{\pi}{2}]} $. \newline
\end{theorem}

\vspace{0.2 cm}

\noindent
Using Theorem \ref{Theorem:4.1111} and Theorem \ref{Theorem:4.1112} together with results in \cite{BFK}, \cite{Lu}, \cite{BL}, \cite{KL},
we obtained the gluing formula of the refined analytic torsion as follows (Theorem 4.3 in \cite{HL1}).

\begin{theorem}   \label{Theorem:4.1113}
Let $({\widetilde M}, g^{\widetilde M})$ be a closed Riemannian manifold of dimension $m = 2r -1$ and $Y$ be a hypersurface so that
${\widetilde M} = M_{1} \cup_{Y} M_{2}$. We assume that $g^{\widetilde M}$ is a product metric near $Y$ and
for each $0 \leq q \leq m$, $i = 1, 2$, $H^{q}({\widetilde M}, {\widetilde E}) = H^{q}(M_{i}, Y; E_{i}) = H^{q}(M_{i};E_{i}) = 0$,
where $E_{i} = {\widetilde E}|_{M_{i}}$. Then for a Hermitian connection $\nabla$, we have
\begin{eqnarray*}
T_{{\widetilde M}}(g^{{\widetilde M}}, \nabla)  & = & T_{M_{1}, {\mathcal P}_{+}}(g^{M_{1}}, \nabla) \cdot
T_{M_{2}, {\mathcal P}_{-}}(g^{M_{2}}, \nabla).
\end{eqnarray*}
\end{theorem}

\vspace{0.3 cm}

\section{The refined analytic torsion as an element of a determinant line}

\vspace{0.2 cm}

In this section we are going to define the refined analytic torsion
without assuming the Assumption $\I$ and $\II$ in Subsection 4.1.
For this purpose we begin with the cochain complex (4.2).
We can define the refined analytic torsion for the complex (4.3) in the same way.

\vspace{0.3 cm}

\subsection{Decomposition of the complex and the graded determinant}
For $\lambda \geq 0$ we define a spectral projection
$\Pi_{\B^{2}_{{\widetilde {\mathcal P}}_{0}}, [0, \lambda]} : \Omega^{\bullet, \infty}_{{\widetilde {\mathcal {\mathcal P}}}_{0}}(M, E)
\rightarrow \Omega^{\bullet, \infty}_{{\widetilde {\mathcal {\mathcal P}}}_{0}}(M, E)$
onto the generalized eigenspace of $\B^{2}_{{\widetilde {\mathcal P}}_{0}}$ corresponding to generalized eigenvalues in
$\{ \, z \in \mathbb{C} : |z| \le \lambda \, \}$ by

$$
\Pi_{\B^{2}_{{\widetilde {\mathcal P}}_{0}}, [0, \lambda]} \hspace{0.1 cm}  = \hspace{0.1 cm}
\frac{i}{2\pi}\int_{|z|= \lambda + \epsilon}\big(\, \B^{2}_{{\widetilde {\mathcal P}}_{0}} - z \, \big)^{-1} \hspace{0.1 cm} d z,
$$

\noindent
where $\epsilon > 0$ is small enough so that there are no
generalized eigenvalues of $\B^{2}_{{\widetilde {\mathcal P}}_{0}}$ whose absolute values are in the interval $(\lambda, \lambda + \epsilon]$.
We note that $\Pi_{\B^{2}_{{\widetilde {\mathcal P}}_{0}}, [0, \lambda]}$ commutes with $\B^{2}_{{\widetilde {\mathcal P}}_{0}}$
and $\Imm \Pi_{\B^{2}_{{\widetilde {\mathcal P}}_{0}}, [0, \lambda]}$ is a finite dimensional vector space.
Putting

$$
\Omega^{\bullet, \infty}_{{\widetilde {\mathcal P}}_{0}, [0, \lambda]}(M, E) \, := \,
\Pi_{\B^{2}_{{\widetilde {\mathcal P}}_{0}}, [0, \lambda]} \big(\, \Omega^{\bullet, \infty}_{{\widetilde {\mathcal {\mathcal P}}}_{0}}(M, E) \, \big),
\qquad
\Omega^{\bullet, \infty}_{{\widetilde {\mathcal P}}_{0}, (\lambda, \infty)}(M, E) \, := \,
\big( \, \Id - \Pi_{\B^{2}_{{\widetilde P}_{0}}, [0, \lambda]} \, \big) \big(\, \Omega^{\bullet, \infty}_{{\widetilde {\mathcal P}}_{0}}(M, E) \, \big),
$$

\vspace{0.2 cm}

\noindent
we have

\begin{equation}  \label{E:6.1}
\Omega^{\bullet, \infty}_{{\widetilde {\mathcal P}}_{0}}(M, E) \, = \, \Omega^{\bullet, \infty}_{{\widetilde {\mathcal P}}_{0}, [0, \lambda]}(M, E) \, \oplus \,
\Omega^{\bullet, \infty}_{{\widetilde {\mathcal P}}_{0}, (\lambda, \infty)}(M, E).
\end{equation}

\vspace{0.2 cm}

\noindent
Let $\mathcal{I}$ be either $[0, \lambda]$ or $(\lambda, \infty)$.
Since $\nabla$ commutes with $\B^{2}_{\widetilde{\mathcal{P}}_{0}}$, $\nabla$ commutes with $\Pi_{\B^{2}_{{\widetilde P}_{0}}, [0, \lambda]}$
and hence ($\Omega^{\bullet, \infty}_{\widetilde{\mathcal{P}}_{0},\mathcal{I}}(M,E), \nabla)$ is a subcomplex of
$(\Omega^{{\bullet, \infty}}_{\widetilde{\mathcal{P}}_{0}}(M,E), \nabla)$.
Similarly, $\Gamma$ acts on ($\Omega^{\bullet, \infty}_{\widetilde{\mathcal{P}}_{0},\mathcal{I}}(M,E), \nabla)$.
We denote by $H_{\widetilde{\mathcal{P}}_{0}}^\bullet(M,E)$ and $H_{\widetilde{\mathcal{P}}_{0},[0,\lambda]}^\bullet(M,E)$
the cohomologies of the complexes $\big( \, \Omega^{\bullet, \infty}_{\widetilde{\mathcal{P}}_{0}}(M,E), {\nabla} \, \big)$ and
$\big( \, \Omega^{\bullet, \infty}_{\widetilde{\mathcal{P}}_{0}, [0,\lambda]}(M,E), \nabla_{[0,\lambda]} \, \big) $, respectively,
where $\nabla_{[0,\lambda]}:= \nabla |_{\Omega^\bullet_{\widetilde{\mathcal{P}}_{0},[0,\lambda]}(M,E)}$.
Then, we have the following lemma.
Since the proof is similar to the proof of \cite[Proposition 3]{Mo} or \cite[Proposition 2.2]{Su2}, we skip the details.

\begin{lemma} \label{Lemma:6.1}
For each $\lambda \ge 0$, the complex $(\Omega^{\bullet, \infty}_{\widetilde{\mathcal{P}}_{0},(\lambda,\infty)}(M,E), \nabla)$ is acyclic and
\begin{eqnarray*}
H_{\widetilde{\mathcal{P}}_{0}}^{\bullet}(M,E) & \cong & H_{\widetilde{\mathcal{P}}_{0}, [0,\lambda]}^{\bullet}(M,E).
\end{eqnarray*}
In particular, the inclusion $(\Omega^{\bullet, \infty}_{\widetilde{\mathcal{P}}_{0},[0, \lambda]}(M,E), \nabla) \hookrightarrow (\Omega^{\bullet, \infty}_{\widetilde{\mathcal{P}}_{0}}(M,E), \nabla)$ induces an isomorphism in cohomology and
$$
H^{q}_{\widetilde{\mathcal{P}}_{0}}(M, E) \hspace{0.1 cm} = \hspace{0.1 cm}
\begin{cases} H^{q}(M, Y ; \rho)  \quad  \text{if} \quad q \quad \text{is} \quad \text{even} \\
H^{q}(M ; \rho)  \quad  \text{if} \quad q \quad \text{is} \quad \text{odd}.
\end{cases}
$$
\end{lemma}

\vspace{0.3 cm}

\noindent
Setting

\begin{eqnarray*}
\Omega^{q, -, \infty}_{{\widetilde {\mathcal P}}_{0}, (\lambda, \infty)}(M, E)  :=
\ker \nabla \cap \Omega^{q, \infty}_{{\widetilde {\mathcal P}}_{0}, (\lambda, \infty)}(M, E),
\qquad
\Omega^{q, +, \infty}_{{\widetilde {\mathcal P}}_{0}, (\lambda, \infty)}(M, E)  :=
\ker (\Gamma \nabla \Gamma) \cap \Omega^{q, \infty}_{{\widetilde {\mathcal P}}_{0}, (\lambda, \infty)}(M, E), \quad
\end{eqnarray*}

\noindent
we have

\begin{equation}  \label{E:6.2}
\Omega_{\widetilde{\mathcal{P}}_{0}, (\lambda, \infty)}^{q, \infty}(M,E) \, = \,
\Omega_{\widetilde{\mathcal{P}}_{0}, (\lambda, \infty)}^{q,-, \infty}(M,E) \oplus
\Omega_{\widetilde{\mathcal{P}}_{0}, (\lambda, \infty)}^{q,+, \infty}(M,E).
\end{equation}

\vspace{0.2 cm}

\noindent
We denote by $\B_{q,\widetilde{\mathcal{P}}_{0}}^{2, \mathcal{I}}$, $\B_{\even, \mathcal{P}_{-, {\mathcal L}_{0}}}^\mathcal{I}$ and
$\B_{\even, \mathcal{P}_{-, {\mathcal L}_{0}}}^{\pm,(\lambda,\infty)}$
the restriction of $\B^{2}$, $\B_{\even}$ and $\B_{\even, \mathcal{P}_{-, {\mathcal L}_{0}}}$ to $\Omega^{q, \infty}_{\widetilde{\mathcal{P}}_{0}, \mathcal{I}}(M,E)$,
$\Omega^{\even, \infty}_{\mathcal{P}_{-, {\mathcal L}_{0}}, \mathcal{I}}(M,E)$ and $\Omega_{\mathcal{P}_{-, {\mathcal L}_{0}}, (\lambda,\infty)}^{\even,\pm, \infty}(M,E)$.
Then we have the following direct sum decomposition

\begin{equation} \label{E:6.3}
\B_{\even, \mathcal{P}_{-, {\mathcal L}_{0}}}^{(\lambda,\infty)} \, = \,
\B_{\even, \mathcal{P}_{-, {\mathcal L}_{0}}}^{-,(\lambda,\infty)} \, \oplus \, \B_{\even, \mathcal{P}_{-, {\mathcal L}_{0}}}^{+,(\lambda,\infty)}.
\end{equation}

\begin{definition} \label{Definition:6.2}
Let $\theta \in (-\frac{\pi}{2}, 0)$ be an Agmon angle for
$\B_{\even, \mathcal{P}_{-, {\mathcal L}_{0}}}^{(\lambda,\infty)}$. Then
the graded determinant associated to
$\B_{\even,\widetilde{\mathcal{P}}_{0}}^{(\lambda,\infty)}$ and $\theta$ is defined by
\begin{eqnarray} \label{E:6.4}
\Det_{\gr,\theta}(\B_{\even, \mathcal{P}_{-, {\mathcal L}_{0}}}^{(\lambda,\infty)}) & := &
\frac{\Det_\theta(\B_{\even, \mathcal{P}_{-, {\mathcal L}_{0}}}^{+,(\lambda,\infty)})}
{\Det_\theta(-\B_{\even, \mathcal{P}_{-, {\mathcal L}_{0}}}^{-,(\lambda,\infty)})} \hspace{0.1 cm}.
\end{eqnarray}
We define the graded determinant $\Det_{\gr,\theta}(\B_{\even, \mathcal{P}_{+, {\mathcal L}_{1}}}^{(\lambda,\infty)})$ in the same way.
\end{definition}

\vspace{0.3 cm}

\subsection{The canonical element of the determinant line}

In this subsection we briefly review the refined torsion for a finite complex. We refer to \cite{BK2} for more details.
Let $(C^{\bullet}, \partial, \Gamma)$ be a finite complex consisting of finite dimensional vector spaces as follows.

$$
(C^{\bullet}, \hspace{0.1 cm} \partial, \hspace{0.1 cm} \Gamma) \hspace{0.1 cm} : \hspace{0.1 cm}
 0 \longrightarrow C^{0} \stackrel{\partial}{\longrightarrow}
C^{1} \stackrel{\partial}{\longrightarrow} \cdots \stackrel{\partial}{\longrightarrow} C^{m-1}
\stackrel{\partial}{\longrightarrow} C^{m} \longrightarrow 0.
$$

\vspace{0.2 cm}

\noindent
Here $\Gamma : C^{\bullet} \rightarrow C^{\bullet}$ is an involution satisfying
$\Gamma (C^{q}) = C^{m-q}$ for $0 \leq q \leq m$. We call $\Gamma$ a chirality operator.
We define the determinant line of $(C^{\bullet}, \partial, \Gamma)$ by

\begin{equation} \label{E:5.11}
\Det(C^\bullet) \, = \, \bigotimes_{q=0}^m \Det (C^q)^{(-1)^q}, \quad m=2r-1,
\end{equation}

\noindent
where $\Det (C^q)$ is the top exterior power of $C^q$ and
$\Det(C^\bullet)^{(-1)} \, := \, \operatorname{Hom}\big(\, \Det(C^\bullet), \mathbb{C}\, \big)$.
We define the canonical element $c_{\Gamma}$ of $\Det(C^\bullet)$ as follows.
We first extend the chirality operator $\Gamma \, : \, C^{\bullet} \to C^{\bullet}$ to determinant lines
and choose arbitrary non-zero elements $c_{q} \in \Det (C^q)$. Then we define $c_{\Gamma}$ by

\begin{eqnarray} \label{E:6.5}
c_\Gamma & := & (-1)^{{\mathcal{R}} (C^{\bullet})} \cdot c_0 \otimes c_1^{-1}\otimes \cdots \otimes c_{r-1}^{(-1)^{r-1}} \otimes \nonumber  \\
& & \hspace{1.0 cm}  (\Gamma c_{r-1})^{(-1)^{r}} \otimes (\Gamma c_{r-2})^{(-1)^{r-1}} \otimes \cdots
\cdots \otimes (\Gamma c_1) \otimes (\Gamma c_0)^{(-1)} \in \Det(C^\bullet),
\end{eqnarray}

\noindent
where $(-1)^{{\mathcal{R}} (C^{\bullet})}$ is a normalization factor defined by (cf. (4-2) in \cite{BK2})

$$
{\mathcal{R}} (C^{\bullet}) = \frac{1}{2} \sum_{q=0}^{r-1} \dim C^{q} \cdot \left( \dim C^{q} + (-1)^{r + q} \right),
$$

\noindent
and $c^{-1} \in \Det(C^q)^{-1}$ is the unique dual element of $c \in \Det(C^q)$ such that $c^{-1}(c)=1$.
We denote by $H^\bullet(\partial)$ the cohomology of the complex $(C^\bullet,\partial)$.
Then there is a standard isomorphism

$$
\phi_{C^\bullet} \, : \, \Det(C^\bullet) \to \Det(H^\bullet(\partial)).
$$

\noindent
We refer to Subsection 2.4 in \cite{BK2} for the definition of the isomorphism $\phi_{C^\bullet}$.
The refined torsion of the pair $(C^\bullet, \Gamma)$ is the element $\rho_{\Gamma}$ in $\Det(H^\bullet(\partial))$
defined by

\begin{equation}  \label{E:6.6}
\rho_{\Gamma} \, = \, \rho_{C^\bullet, \Gamma} \, := \, \phi_{C^\bullet}(c_\Gamma) \in \Det(H^\bullet(\partial)).
\end{equation}

\vspace{0.2 cm}

Denote by $\rho_{\widetilde{\mathcal{P}}_{0}, [0,\lambda]}$ the refined torsion of
the complex $\big( \, \Omega^\bullet_{\widetilde{\mathcal{P}}_{0}, [0,\lambda]}(M,E),\nabla_{[0,\lambda]}\, \big)$.
Then we have the following lemma (cf. Proposition 7.8 in \cite{BK2} and Proposition 3.18 in \cite{Ve1}).

\vspace{0.2 cm}

\begin{definition} \label{Definition:6.3}
Let $\theta \in (-\frac{\pi}{2}, 0)$ be an Agmon angle for the operator
${\B}_{\even, \widetilde{\mathcal{P}}_{0}}^{(\lambda,\infty)}$.
We define

\begin{equation} \label{E:6.7}
\rho(\widetilde{\mathcal{P}}_{0}, \nabla, g^M) \, := \,
\Det_{\gr, \theta}({\B}_{\even, \widetilde{\mathcal{P}}_{0}}^{(\lambda,\infty)}) \cdot
\rho_{\widetilde{\mathcal{P}}_{0}, [0,\lambda]} \in \Det \big(\, H_{{\widetilde {\mathcal{P}}_{0}}}^\bullet(M,E)\, \big).
\end{equation}
\end{definition}

\begin{lemma}  \label{Lemma:6.3}
$\rho(\widetilde{\mathcal{P}}_{0}, \nabla, g^M)$
is independent of the choice of $\lambda \ge 0$ and choice of Agmon angle $\theta \in (-\frac{\pi}{2},0)$
for the operator ${\B}_{\widetilde{\mathcal{P}}_{0}}^{(\lambda,\infty)}$.
\end{lemma}

\begin{proof}
For $0 \leq \lambda < \mu$ we have
$$
\Det_{\gr, \theta}({\B}_{\even, \widetilde{\mathcal{P}}_{0}}^{(\lambda,\infty)}) =
\Det_{\gr, \theta}({\B}_{\even, \widetilde{\mathcal{P}}_{0}}^{(\lambda,\mu]}) \cdot
\Det_{\gr, \theta}({\B}_{\even, \widetilde{\mathcal{P}}_{0}}^{(\mu,\infty)}).
$$
Further, Proposition 5.10 in \cite{BK2} shows that
$$
\rho_{\widetilde{\mathcal{P}}_{0}, [0,\mu]} = \Det_{\gr, \theta}({\B}_{\even, \widetilde{\mathcal{P}}_{0}}^{(\lambda,\mu]}) \cdot
\rho_{\widetilde{\mathcal{P}}_{0}, [0,\lambda]}.
$$
These two equalities show that (\ref{E:6.7}) is independent of the choice of $\lambda \geq 0$.
If $\theta$, $\theta^{\prime} \in (-\frac{\pi}{2}, 0)$ are both Agmon angles for ${\B}_{\even, \widetilde{\mathcal{P}}_{0}}^{(\lambda,\infty)}$,
Theorem \ref{Theorem:2.22} shows that there are only finitely many eigenvalues of ${\B}_{\even, \widetilde{\mathcal{P}}_{0}}^{(\lambda,\infty)}$ in the solid angle between  $\theta$ and $\theta^{\prime}$ and hence the independence of the Agmon angles follows from the argument in the Subsection 3.10 in \cite{BK1}.
\end{proof}

\vspace{0.2 cm}

\noindent
{\it Remark} :
Since different Hermitian metrics give rise to equivalent $L^{2}$-norms over compact manifolds,
the domain of $\B_{\widetilde{\mathcal{P}}_{0}}$ is independent of the choice of the Hermitian metric $h^E$.
Moreover, since the definition of $\B_{\widetilde{\mathcal{P}}_{0}}$ does not use the Hermitian structure,
$\rho(\widetilde{\mathcal{P}}_{0}, \nabla, g^M)$ is independent of
the choice of the Hermitian metric $h^E$ (cf. p.2004 in \cite{Ve1}).

\vspace{0.2 cm}

We recall the Definition \ref{Definition:3.51} for the eta invariants $\eta(\B^{[0, \lambda]}_{\even, {\widetilde {\mathcal P}}_{0}})$,
$\eta(\B^{(\lambda, \infty)}_{\even, {\widetilde {\mathcal P}}_{0}})$.
Then
$$
\eta(\B_{\even, {\widetilde {\mathcal P}}_{0}}) \, = \, \eta(\B^{[0, \lambda]}_{\even, {\widetilde {\mathcal P}}_{0}})
\, + \,  \eta(\B^{(\lambda, \infty)}_{\even, {\widetilde {\mathcal P}}_{0}}).
$$
We next define $\xi_{\lambda,\widetilde{\mathcal{P}}_{0}}(\nabla,g^M)$ and $L_{q,\lambda,\widetilde{\mathcal{P}}_{0}}$ by
\begin{eqnarray*}
\xi_{\lambda, \widetilde{\mathcal{P}}_{0}}(\nabla,g^M) & := &
- \frac{1}{2} \sum_{q=0}^{m} (-1)^{q+1} \cdot q \cdot \frac{d}{ds}\big|_{s=0}
\zeta_{\B^{2,(\lambda,\infty)}_{q, \widetilde{\mathcal{P}}_{0}}}(s),  \\
L_{q,\lambda,\widetilde{\mathcal{P}}_{0}} & := & \dim \Omega^q_{\widetilde{\mathcal{P}}_{0}, (0,\lambda]}(M,E).
\end{eqnarray*}
The following lemma is analogous to Corollary \ref{Corollary:4.5}.

\vspace{0.2 cm}

\begin{lemma} \label{Lemma:6.4}
Recall that $~ {\frak L}^{0, q}_{{\widetilde {\mathcal P}}_{0}} ~$ the dimension of $0$-generalized eigenspace of
$\B^{2}_{q, {\widetilde {\mathcal P}}_{0}}$. Then :
\begin{eqnarray*}
 \rho(\widetilde{\mathcal{P}}_{0}, \nabla, g^M)
& = &  \rho_{\widetilde{\mathcal{P}}_{0}, [0,\lambda]} \cdot e^{\xi_{\lambda, \widetilde{\mathcal{P}}_{0}}(\nabla,g^M)}
\cdot e^{- i \pi \eta( \B^{(\lambda, \infty)}_{\even,\widetilde{\mathcal{P}}_{0}})} \cdot \\
& & e^{\frac{\pi i}{2}\big(\, -\sum_{q=0}^m(-1)^{q+1} \cdot q \cdot L_{q, \lambda, \widetilde{\mathcal{P}}_{0}}
\,-\, \sum_{q=0}^m(-1)^{q+1} \cdot {q} \cdot {\frak L}^{0, q}_{{\widetilde {\mathcal P}}_{0}}
\,+\, \frac{1}{4}\sum_{q=0}^{m-1}\zeta_{\B^2_{Y,q}}(0) \, + \, \sum_{q=0}^{r-2}(r-1-q) (l_{q}^{+} - l_{q}^{-}) \,\big)}.
\end{eqnarray*}
\end{lemma}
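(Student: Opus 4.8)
The plan is to combine the $\lambda$-independence of $\rho(\widetilde{\mathcal{P}}_{0},\nabla,g^M)$ from Lemma \ref{Lemma:6.3} with an explicit computation of the graded determinant $\Det_{\gr,\theta}(\B^{(\lambda,\infty)}_{\even,\widetilde{\mathcal{P}}_{0}})$, exactly paralleling the derivation of \eqref{E:4.12} and Corollary \ref{Corollary:4.5} but now for the restriction of $\B_{\even,\mathcal{P}_{-,\mathcal{L}_0}}$ to the part of the spectrum of absolute value $>\lambda$. First I would write, using Definition \ref{Definition:6.2} and the analogue of Lemma \ref{Lemma:4.53} on the subspace $\Omega^{\bullet,\infty}_{\widetilde{\mathcal{P}}_0,(\lambda,\infty)}(M,E)$,
$$
\log\Det_{\gr,\theta}(\B^{(\lambda,\infty)}_{\even,\widetilde{\mathcal{P}}_0})
=\tfrac12\sum_{q=0}^m(-1)^{q+1}\,q\,\log\Det_{2\theta}\B^{2,(\lambda,\infty)}_{q,\widetilde{\mathcal{P}}_0}
- i\pi\,\eta(\B^{(\lambda,\infty)}_{\even,\widetilde{\mathcal{P}}_0})
+\tfrac{\pi i}{2}\sum_{q=0}^m(-1)^{q+1}\,q\,\zeta_{\B^{2,(\lambda,\infty)}_{q,\widetilde{\mathcal{P}}_0}}(0).
$$
The first sum on the right is, by the definition of $\xi_{\lambda,\widetilde{\mathcal{P}}_0}(\nabla,g^M)$, equal to $\xi_{\lambda,\widetilde{\mathcal{P}}_0}(\nabla,g^M)$, and the second term supplies the factor $e^{-i\pi\eta(\B^{(\lambda,\infty)}_{\even,\widetilde{\mathcal{P}}_0})}$. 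Substituting into \eqref{E:6.7} then gives $\rho(\widetilde{\mathcal{P}}_0,\nabla,g^M)=\rho_{\widetilde{\mathcal{P}}_0,[0,\lambda]}\cdot e^{\xi_{\lambda,\widetilde{\mathcal{P}}_0}(\nabla,g^M)}\cdot e^{-i\pi\eta(\B^{(\lambda,\infty)}_{\even,\widetilde{\mathcal{P}}_0})}$ times the exponential of $\tfrac{\pi i}{2}\sum_{q}(-1)^{q+1}q\,\zeta_{\B^{2,(\lambda,\infty)}_{q,\widetilde{\mathcal{P}}_0}}(0)$.

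The remaining task is to identify this last $\zeta$-value exponent with the expression in the statement. The key identity is the splitting
$$
\zeta_{\B^2_{q,\widetilde{\mathcal{P}}_0}}(0) = \zeta_{\B^{2,(\lambda,\infty)}_{q,\widetilde{\mathcal{P}}_0}}(0) + \dim\Omega^{q}_{\widetilde{\mathcal{P}}_0,(0,\lambda]}(M,E) = \zeta_{\B^{2,(\lambda,\infty)}_{q,\widetilde{\mathcal{P}}_0}}(0) + L_{q,\lambda,\widetilde{\mathcal{P}}_0},
$$
coming from the fact that $\B^{2,[0,\lambda]}_{q,\widetilde{\mathcal{P}}_0}$ acts on a finite-dimensional space whose zeta function at $0$ is its dimension minus the dimension $k_{q,\widetilde{\mathcal{P}}_0}$ of the generalized $0$-eigenspace (which carries no $\theta$-regularized contribution), so that $\zeta_{\B^{2,[0,\lambda]}_{q,\widetilde{\mathcal{P}}_0}}(0) = L_{q,\lambda,\widetilde{\mathcal{P}}_0}$ after removing the genuine kernel. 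Hence
$$
\sum_{q=0}^m(-1)^{q+1}q\,\zeta_{\B^{2,(\lambda,\infty)}_{q,\widetilde{\mathcal{P}}_0}}(0)
= \sum_{q=0}^m(-1)^{q+1}q\,\zeta_{\B^2_{q,\widetilde{\mathcal{P}}_0}}(0) - \sum_{q=0}^m(-1)^{q+1}q\,L_{q,\lambda,\widetilde{\mathcal{P}}_0}.
$$
Now I would invoke Lemma \ref{Lemma:3.2} to rewrite $\sum_q(-1)^{q+1}q\,\zeta_{\B^2_{q,\widetilde{\mathcal{P}}_0}}(0)$ in terms of the $\zeta_{\B^2_{Y,q}}(0)$ and the $l_q^{\pm}$: since $\zeta_{\B^2_{q,\widetilde{\mathcal{P}}_0}}(0)+k_{q,\widetilde{\mathcal{P}}_0}$ equals the quantity computed there, the telescoping $\sum_q(-1)^{q+1}q(\,\cdot\,)$ collapses, just as in \eqref{E:4.12} and Corollary \ref{Corollary:4.5}, to $\tfrac14\sum_{q=0}^{m-1}\zeta_{\B^2_{Y,q}}(0) + \sum_{q=0}^{r-2}(r-1-q)(l_q^+-l_q^-)$ plus the correction $-\sum_q(-1)^{q+1}q\,k_{q,\widetilde{\mathcal{P}}_0}$ that accounts for the kernels. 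Collecting the $\tfrac{\pi i}{2}$-prefactors yields precisely the exponent displayed in the statement.

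The main obstacle I anticipate is purely bookkeeping: making sure the kernel-dimension terms $k_{q,\widetilde{\mathcal{P}}_0}$ and the small-eigenvalue dimensions $L_{q,\lambda,\widetilde{\mathcal{P}}_0}$ are kept strictly separate (the first genuinely in $\ker\B^2$, the second the $(0,\lambda]$-part), and that the combinatorial manipulation of $\sum_q(-1)^{q+1}q$ applied to Lemma \ref{Lemma:3.2} reproduces the coefficients $(r-1-q)$ with the correct range $0\le q\le r-2$; this is the same telescoping computation that underlies Corollary \ref{Corollary:4.5}, so it is routine but must be done carefully with attention to the parity of $r$. One should also note that the $\lambda$-independence of the left side (Lemma \ref{Lemma:6.3}) is consistent with the right side, since $e^{\xi_{\lambda}}$, the $\eta$ of the large part, $\rho_{[0,\lambda]}$, and the $L_{q,\lambda}$-term all depend on $\lambda$ in a way that cancels, providing a useful internal check.
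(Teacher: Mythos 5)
Your proposal matches the paper's proof essentially step for step: both rewrite $\log\Det_{\gr,\theta}(\B^{(\lambda,\infty)}_{\even,\widetilde{\mathcal{P}}_0})$ as in \eqref{E:4.12} (giving the $\xi_\lambda$-term, the $\eta$-term, and a $\zeta(0)$-term), then use the identity $\zeta_{\B^{2,(\lambda,\infty)}_{q,\widetilde{\mathcal{P}}_0}}(0)=\zeta_{\B^{2}_{q,\widetilde{\mathcal{P}}_0}}(0)-L_{q,\lambda,\widetilde{\mathcal{P}}_0}$ together with Lemma \ref{Lemma:3.2} to arrive at the stated exponent. The approach is the same; the only cosmetic difference is that you frame the $\zeta(0)$ splitting via the finite-dimensional $[0,\lambda]$ piece rather than directly quoting the identity, but this is the same content.
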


\vspace{0.2 cm}
\begin{proof}
It's enough to compute $\log \Det_{\gr, \theta}({\B}_{\even, \widetilde{\mathcal{P}}_{0}}^{(\lambda,\infty)})$.
The computation similar to (\ref{E:4.12}) shows that

\begin{eqnarray*}
 \log \Det_{\gr, \theta} (\B^{(\lambda, \infty)}_{\even, {\mathcal P}_{-, {\mathcal L}_{0}}}) \hspace{0.1 cm}
& = &  \log \Det_{\theta} (\B^{+, (\lambda, \infty)}_{\even, {\mathcal P}_{-, {\mathcal L}_{0}}}) -
\log \Det_{\theta}(- \B^{-, (\lambda, \infty)}_{\even, {\mathcal P}_{-, {\mathcal L}_{0}}}) \\
& = & \frac{1}{2} \sum_{q=0}^{m} (-1)^{q+1} \cdot q \cdot \log \Det_{2\theta} \B^{2, (\lambda, \infty)}_{q, {\widetilde {\mathcal P}_{0}}}
\hspace{0.1 cm} + \hspace{0.1 cm}
\frac{\pi i }{2} \sum_{q=0}^{m} (-1)^{q+1} \cdot q \cdot \zeta_{\B^{2, (\lambda, \infty)}_{q, {\widetilde {\mathcal P}_{0}}}}(0)  \\
&  & - \frac{\pi i}{2} \left(\eta_{\B^{(\lambda, \infty)}_{\even, {\mathcal P}_{-, {\mathcal L}_{0}}}}(0)
+ {\frak L}^{+, (\lambda, \infty)}_{{\mathcal P}_{-, {\mathcal L}_{0}}} -
{\frak L}^{-, (\lambda, \infty)}_{{\mathcal P}_{-, {\mathcal L}_{0}}} \right) \\
& = & \xi_{\lambda, \widetilde{\mathcal{P}}_{0}}(\nabla,g^M)
- i \pi \eta( \B^{(\lambda, \infty)}_{\even,\widetilde{\mathcal{P}}_{0}})
+ \frac{\pi i }{2} \sum_{q=0}^{m} (-1)^{q+1} \cdot q \cdot
\zeta_{\B^{2, (\lambda, \infty)}_{q, {\widetilde {\mathcal P}_{0}}}}(0).  \\
\end{eqnarray*}

\noindent
Using the fact
$\zeta_{\B^{2, (\lambda, \infty)}_{q, {\widetilde {\mathcal P}_{0}}}}(0) =
\zeta_{\B^{2}_{q, {\widetilde {\mathcal P}}_{0}}}(0) - L_{q,\lambda,\widetilde{\mathcal{P}}_{0}}$
and Lemma \ref{Lemma:3.2},
the result follows.
\end{proof}

\vspace{0.2 cm}

\noindent
Finally, we define the refined analytic torsion as an element of a determinant line as follows.

\vspace{0.2 cm}

\begin{definition} \label{Definition:6.9}
We assume the same assumptions as in Definition \ref{Definition:4.10} except the Assumption $\I$ and $\II$ in Subsection 4.1.
We define the refined analytic torsions
$\rho_{an,\widetilde{\mathcal{P}}_{0}}(g^{M}, \nabla)$ and
$\rho_{an,\widetilde{\mathcal{P}}_{1}}(g^{M}, \nabla)$ by
\begin{eqnarray*}
\rho_{an, \widetilde{\mathcal{P}}_{0}}(g^{M}, \nabla) & := & \rho(\widetilde{\mathcal{P}}_{0}, \nabla, g^{M}) \cdot
exp \left\{\frac{i \pi}{2} (\operatorname{rank}E) \hspace{0.05 cm} \eta_{\, \B^{\trivial}_{\even, {\mathcal P}_{-, {\mathcal L}_{0}}}(g^{M})}(0) \right\}
\hspace{0.1 cm} \in \hspace{0.1 cm} \Det \left( H_{\widetilde{\mathcal{P}}_{0}}^{\bullet}(M,E) \right),   \\
\rho_{an, \widetilde{\mathcal{P}}_{1}}(g^{M}, \nabla) & := & \rho(\widetilde{\mathcal{P}}_{1}, \nabla, g^{M}) \cdot
exp \left\{\frac{i \pi}{2} (\operatorname{rank}E) \hspace{0.05 cm} \eta_{\, \B^{\trivial}_{\even, {\mathcal P}_{+, {\mathcal L}_{1}}}(g^{M})}(0) \right\}
\hspace{0.1 cm} \in \hspace{0.1 cm} \Det \left( H_{\widetilde{\mathcal{P}}_{1}}^{\bullet}(M,E) \right).
\end{eqnarray*}
\end{definition}

\vspace{0.2 cm}

\subsection{The metric dependency of the refined analytic torsion}

In this subsection we discuss the metric dependency of $\rho(\widetilde{\mathcal{P}}_{0}, \nabla, g^{M})$.
We suppose that $\{g^{M}_{v} \mid  - \delta < v < \delta \}$, $\delta > 0$, is a family of Riemannian metrics on $M$ such that each $g_{v}^{M}$
is a product one and does not vary on $[0, \epsilon ) \times Y$ for some fixed $\epsilon > 0$.
We denote by $\Gamma_{v}$ and $\B_{{\widetilde {\mathcal P}}_{0}}(v) \, = \, \B_{\widetilde{\mathcal{P}}_{0}} (\nabla, g^{M}_{v})$ the chirality operator and
the odd signature operator corresponding to $g^{M}_{v}$.
We define $\B_{\even, \widetilde{\mathcal{P}}_{0}}(v)$, $\B^{[0,\lambda]}_{\even, \widetilde{\mathcal{P}}_{0}}(v)$ and
$\B^{(\lambda,\infty)}_{\even, \widetilde{\mathcal{P}}_{0}}(v)$ in similar ways.
Then we get the associated refined torsion $\rho_{\widetilde{\mathcal{P}},[0,\lambda]} (v)$
of the complex $(\Omega^{\bullet, \infty}_{\widetilde{\mathcal{P}}_{0}, [0,\lambda], g_{v}^{M}}(M,E), \nabla_{[0,\lambda]})$
(cf. (\ref{E:6.6})) and we denote by $\rho(v) \, = \,\rho(\widetilde{\mathcal{P}}_{0}, \nabla, g_{v}^{M})$
the canonical element defined in (\ref{E:6.7}).
Then we have the following lemma, whose proof is a verbatim repetition of Lemma 9.2 in \cite{BK2} (cf. Proposition 4.5 in \cite{Ve1}).

\vspace{0.2 cm}

\begin{lemma} \label{Lemma:6.5}
We choose $\lambda$ and ${\widetilde \delta_{0}}$  ($\hspace{0.1 cm} 0 < {\widetilde \delta_{0}} \leq \delta \hspace{0.1 cm}$)
so that $\B^{2}_{{\widetilde {\mathcal P}}_{0}}(v)$ does not have any
generalized eigenvalues whose absolute values are equal to $\lambda$ for $- {\widetilde \delta_{0}} < v < {\widetilde \delta_{0}}$.
Then the product
$$
e^{\xi_{\lambda, \widetilde{\mathcal{P}}_{0}}(\nabla, \, g^{M}_{v})} \cdot \rho_{\widetilde{\mathcal{P}}_{0}, [0,\lambda]} (v)
\in \Det \big(\, H_{\widetilde{\mathcal{P}}_{0}}^{\bullet}(M,E)\, \big)
$$
is independent of $v  \in (- {\widetilde \delta_{0}},  {\widetilde \delta_{0}})$.
\end{lemma}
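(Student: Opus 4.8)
The plan is to differentiate the product $e^{\xi_{\lambda,\widetilde{\mathcal P}_0}(\nabla,g^M_t)}\cdot\rho_{\widetilde{\mathcal P}_0,[0,\lambda]}(t)$ in $t$ and show that the derivative vanishes, following the argument of Lemma 9.2 in [6] (cf. Proposition 4.5 in [30]). With $\lambda$ and $\widetilde\delta_0$ chosen as in the statement, for $-\widetilde\delta_0<t<\widetilde\delta_0$ the spectral projections $\Pi_{\B^2_{\widetilde{\mathcal P}_0}(t),[0,\lambda]}$ form a smooth family of finite-rank projections of constant rank in each degree; they commute with $\nabla$ and so induce a smooth family of isomorphisms of the finite complexes $\big(\Omega^{\bullet,\infty}_{\widetilde{\mathcal P}_0,[0,\lambda],g^M_t}(M,E),\nabla_{[0,\lambda]}\big)$ which, by Lemma \ref{Lemma:6.1}, act as the identity on the metric-independent cohomology $H^\bullet_{\widetilde{\mathcal P}_0}(M,E)$. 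Under these identifications both factors become quantities over the fixed line $\Det\big(H^\bullet_{\widetilde{\mathcal P}_0}(M,E)\big)$, so that ``independence of $t$'' reduces to the vanishing of a scalar derivative.

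First I would compute $\tfrac{d}{dt}\rho_{\widetilde{\mathcal P}_0,[0,\lambda]}(t)$. This is a finite-dimensional computation: the complex above together with its chirality operator $\Gamma_t=\Gamma(g^M_t)$ varies smoothly, and the refined torsion $\rho_{\Gamma_t}=\phi_{C^\bullet}(c_{\Gamma_t})$ is a polynomial-type expression in these data, so differentiating --- exactly as in Section 4 and Section 9 of [6] --- expresses $\tfrac{d}{dt}\rho_{\widetilde{\mathcal P}_0,[0,\lambda]}(t)$ in terms of the restriction of $\Gamma_t^{-1}\dot\Gamma_t$ to the degree pieces $\Omega^{q,\infty}_{\widetilde{\mathcal P}_0,[0,\lambda]}(M,E)$ and of $\dot\Pi_{\B^2_{\widetilde{\mathcal P}_0}(t),[0,\lambda]}$.

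Second I would compute $\tfrac{d}{dt}\xi_{\lambda,\widetilde{\mathcal P}_0}(\nabla,g^M_t)$ via the standard variational formula for the zeta-determinants $\Det_{2\theta}\B^{2,(\lambda,\infty)}_{q,\widetilde{\mathcal P}_0}(t)$. Because $g^M_t$ is a product metric that is frozen on $[0,\epsilon)\times Y$, the variation $\dot\B^2_q(t)$ is supported in the interior of $M$, so no boundary term enters the anomaly; the variation of $\log\Det_{2\theta}\B^{2,(\lambda,\infty)}_{q,\widetilde{\mathcal P}_0}(t)$ is the constant term of the small-time expansion of $\Tr\big(\dot\B^2_q(t)\,e^{-s\B^{2,(\lambda,\infty)}_{q,\widetilde{\mathcal P}_0}}\big)$ together with the finite-rank correction coming from the cutoff $\lambda$. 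Forming the alternating sum $\sum_q(-1)^{q+1}q(\cdots)$, the cutoff corrections cancel precisely against the term produced in the first step, while the interior local contribution vanishes because $\dim M$ is odd --- the same local mechanism that underlies the metric-independence of the Ray-Singer torsion in odd dimensions and the vanishing in Lemma \ref{Lemma:3.20}. Adding the two steps gives $\tfrac{d}{dt}\big(e^{\xi_{\lambda,\widetilde{\mathcal P}_0}(\nabla,g^M_t)}\cdot\rho_{\widetilde{\mathcal P}_0,[0,\lambda]}(t)\big)=0$.

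The hard part will be the precise matching of the two cancelling contributions --- identifying the finite-rank term obtained by differentiating the spectral projection inside $\tfrac{d}{dt}\xi$ with the one appearing in $\tfrac{d}{dt}\rho_{[0,\lambda]}$, and confirming that the remaining purely interior term integrates to nothing on the determinant line. Since every anomaly formula used here is local and the metric is unchanged near $Y$, I expect the argument to be, as claimed, a line-by-line transcription of [6, Section 9]; the only genuinely new point is to check that the well-posed boundary condition $\widetilde{\mathcal P}_0$ is preserved along the family and contributes no boundary correction, which holds because the boundary symbols of $\B$, ${\mathcal P}_{-,{\mathcal L}_0}$ and ${\mathcal P}_{+,{\mathcal L}_1}$ are built only from data on the collar, where $g^M_t$ is constant.
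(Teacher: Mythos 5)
Your proposal matches the paper's approach: the paper itself offers no new argument, stating only that the proof is ``a verbatim repetition of Lemma 9.2 in [6] (cf.\ Proposition 4.5 in [30]),'' and your outline is a faithful sketch of exactly that argument --- differentiating the product in $t$, using the anomaly formula for the zeta-determinants with the interior local term vanishing in odd dimensions, and matching the spectral-cutoff correction against the variation of the finite-dimensional refined torsion. Your closing observation that the boundary condition $\widetilde{\mathcal P}_0$ (and hence the complex and its cohomology) is frozen along the family because $g^M_t$ does not vary on the collar is precisely the point that makes the verbatim transcription of the closed-manifold argument legitimate here.
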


\vspace{0.2 cm}

Let $\B^{\trivial}_{\even, {\mathcal P}_{-, {\mathcal L}_{0}}}(v)$ be
the odd signature operator induced from $\nabla^{\trivial}$ and the Riemannian metric $g^{M}_{v}$ (cf. (\ref{E:4.44})).
Then we have the following result.

\vspace{0.2 cm}

\begin{lemma}   \label{Lemma:6.7}
For large enough $\lambda \in {\Bbb R}$,
\begin{equation}  \label{E:5.15}
\eta(\B^{(\lambda,\infty)}_{\even, {\mathcal P}_{-, {\mathcal L}_{0}}}(v)) - \frac{1}{2} \operatorname{rank}(E) \hspace{0.1 cm}
\eta_{\B^{\trivial}_{\even, {\mathcal P}_{-, {\mathcal L}_{0}}}(v)}(0)
\end{equation}
is independent of $v \in (-\delta,  \delta)$.
\end{lemma}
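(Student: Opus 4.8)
The plan is to imitate the metric--variation computation of Section 4 (Lemma \ref{Lemma:4.20} and the discussion leading to Corollary \ref{Corollary:4.9}), applied now to the ``large eigenvalue'' part $\B^{(\lambda,\infty)}_{\even,{\mathcal P}_{-,{\mathcal L}_0}}(t)$, and to combine it with the locality of the relevant heat coefficient. First I would fix $t_0\in(-\delta,\delta)$ and, using Theorem \ref{Theorem:2.22}, choose $\lambda$ large and a sub-interval $I_0\ni t_0$ on which (i) no generalized eigenvalue of $\B^2_{{\widetilde{\mathcal P}}_0}(t)$ has absolute value $\lambda$, so that the spectral projection $\Pi_{\B^2_{{\widetilde{\mathcal P}}_0}(t),[0,\lambda]}$ and the complementary subcomplex $\Omega^{\bullet,\infty}_{{\widetilde{\mathcal P}}_0,(\lambda,\infty)}(M,E)$ vary smoothly in $t$ with constant dimension of the $[0,\lambda]$-piece, and (ii) every generalized eigenvalue $\kappa$ of $\B^{(\lambda,\infty)}_{\even,{\mathcal P}_{-,{\mathcal L}_0}}(t)$ satisfies $\re(\kappa^2)>0$. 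By (ii) no such $\kappa$ is purely imaginary or zero, so Definition \ref{Definition:3.51} gives $\eta(\B^{(\lambda,\infty)}_{\even,{\mathcal P}_{-,{\mathcal L}_0}}(t))=\tfrac12\,\eta_{\B^{(\lambda,\infty)}_{\even,{\mathcal P}_{-,{\mathcal L}_0}}(t)}(0)$, and since no generalized eigenvalue of $\B^{(\lambda,\infty)}_{\even,{\mathcal P}_{-,{\mathcal L}_0}}(t)$ crosses the imaginary axis over $I_0$, the first term of (\ref{E:5.15}) is a smooth function of $t$ on $I_0$.

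Next I would differentiate it by a verbatim repetition of the computation (\ref{E:4.22})--(\ref{E:4.28}), with the spectral projection onto the $(\lambda,\infty)$-part in the role of $P(v)$ there; this yields
$$
\frac{d}{dt}\,\eta\big(\B^{(\lambda,\infty)}_{\even,{\mathcal P}_{-,{\mathcal L}_0}}(t)\big)\ =\ -\,\frac{c_m(t)}{\sqrt{\pi}},
$$
where $c_m(t)$ is the coefficient of $t^{-1/2}$ in the short-time expansion of $\Tr\big(\dot{\B}_{\even}(t)\,e^{-t\,\B^2_{\even,{\mathcal P}_{-,{\mathcal L}_0}}(t)}\big)$. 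As in Section 4, the crucial point is that $\dot{\B}_{\even}(t)$ vanishes on the collar $[0,\epsilon)\times Y$, so $c_m(t)=\int_M c_m(x,t)\,d\,\mathrm{vol}$ for a density $c_m(x,t)$ supported in the interior of $M$; in particular the boundary condition is irrelevant to $c_m(t)$. The same computation applied to the self-adjoint operator $\B^{\trivial}_{\even,{\mathcal P}_{-,{\mathcal L}_0}}(t)$, whose kernel dimension is a topological invariant by Lemma \ref{Lemma:2.11}, gives as in Lemma \ref{Lemma:4.20} and the discussion after it
$$
\frac{d}{dt}\Big(\tfrac12\operatorname{rank}(E)\,\eta_{\B^{\trivial}_{\even,{\mathcal P}_{-,{\mathcal L}_0}}(t)}(0)\Big)\ =\ -\,\frac{c^{\trivial}_m(t)}{\sqrt{\pi}},
$$
with $c^{\trivial}_m(t)$ the corresponding coefficient for the trivial representation of rank $\operatorname{rank}(E)$.

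Finally I would compare the two coefficients. Since $E$ with its flat connection $\nabla$ is locally isomorphic to $\operatorname{rank}(E)$ copies of the trivial line bundle with the trivial connection, and the metric variation is supported away from $Y$, the heat densities agree pointwise, $c_m(x,t)=c^{\trivial}_m(x,t)$, hence $c_m(t)=c^{\trivial}_m(t)$ and the $t$-derivative of (\ref{E:5.15}) vanishes on $I_0$; thus (\ref{E:5.15}) is locally constant in $t$. Passing from this to independence on all of $(-\delta,\delta)$ is the step I expect to be the main obstacle: $\lambda$ must be adapted to $t$, and $\eta_{\B^{\trivial}_{\even}}(0,t)$ can jump by integers under spectral flow while the $[0,\lambda]$--$(\lambda,\infty)$ splitting of $\B^2_{{\widetilde{\mathcal P}}_0}(t)$ is reshuffled whenever an eigenvalue crosses $\lambda$. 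I would handle this exactly as in the surrounding text, using the $\lambda$-independence of $\rho(\widetilde{\mathcal{P}}_{0}, \nabla, g^M)$ (Lemma \ref{Lemma:6.3}), together with Lemma \ref{Lemma:6.4} and Lemma \ref{Lemma:6.5}, to check that these integer contributions cancel against the change of $\eta(\B^{(\lambda,\infty)})$ and of the dimensions $L_{q,\lambda,\widetilde{\mathcal{P}}_{0}}$, so that the locally constant quantity (\ref{E:5.15}) is globally constant on $(-\delta,\delta)$. The differential identity itself is routine once the computations of Sections 3 and 4 are in place.
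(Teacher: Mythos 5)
Your proposal is correct in spirit and relies on the same core inputs as the paper's proof --- the heat--trace differentiation of Section~4, locality of the coefficient $c_m$ of $t^{-1/2}$, and the spectral facts Theorem~\ref{Theorem:2.22} and Lemma~\ref{Lemma:2.11} --- but it is arranged quite differently, and the global step diverges from the paper's.

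The paper does \emph{not} re-derive the variation formula. It uses the decomposition $\eta(\B_{\even,\mathcal{P}_{-,\mathcal{L}_0}}(t))=\eta(\B^{[0,\lambda]}_{\even,\mathcal{P}_{-,\mathcal{L}_0}}(t))+\eta(\B^{(\lambda,\infty)}_{\even,\mathcal{P}_{-,\mathcal{L}_0}}(t))$ (and likewise for the trivial operator), observes from Definition~\ref{Definition:3.51} that the $[0,\lambda]$-pieces can change only by integers, and then invokes Corollary~\ref{Corollary:4.9} --- whose proof is exactly the Section~4 computation you propose to redo --- to conclude that the expression~\eqref{E:5.15} is constant modulo $\mathbb{Z}$. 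The remaining task is then purely to rule out integer jumps, which is done by noting (Theorem~\ref{Theorem:2.22}) that for $\lambda$ large the operator $\B^{(\lambda,\infty)}$ has no zero or purely imaginary eigenvalues, and (Lemma~\ref{Lemma:2.11}) that $\ker\B^{\trivial}_{\even,\mathcal{P}_{-,\mathcal{L}_0}}(t)$ has topologically constant dimension, so neither $\eta$-quantity can jump. This is more economical than your plan: it quotes Corollary~\ref{Corollary:4.9} rather than re-running the derivation of \eqref{E:4.22}--\eqref{E:4.28}, and the reduction to ``no integer jumps'' is a soft argument, not a patching of local derivatives.

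Your Step~(1)--(3) is essentially an unwinding of Corollary~\ref{Corollary:4.9} restricted to the $(\lambda,\infty)$-block, and as such it is correct (and you correctly use that, for $\lambda$ large, $\eta(\B^{(\lambda,\infty)}_{\even})=\tfrac12\,\eta_{\B^{(\lambda,\infty)}_{\even}}(0)$, so the formula is exact and not merely mod $\mathbb{Z}$). The divergence is in your Step~(4): the paper does not use Lemmas~\ref{Lemma:6.3}--\ref{Lemma:6.5} here at all, and it is not evident that appealing to the determinant-line lemmas would cleanly cancel the half-integer jumps of $\eta(\B^{(\lambda,\infty)}_{\even}(t))$ that occur when an eigenvalue crosses $|z|=\lambda$. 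You are right to flag this as a potential obstacle; the paper's proof quietly fixes a single ``large enough'' $\lambda$ for the whole family, and the mod-$\mathbb{Z}$-then-continuity argument depends on that choice being uniform in $t$. If you want your version to be airtight, the cleaner fix is the paper's: once mod-$\mathbb{Z}$ constancy is established, the $\lambda$-crossing issue is absorbed into the integer ambiguity, and you only need no-integer-jump continuity of each term, which Theorem~\ref{Theorem:2.22} and Lemma~\ref{Lemma:2.11} give directly.
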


\begin{proof}
From the definition of
$\eta(\B^{[0,\lambda]}_{\even, {\mathcal P}_{-, {\mathcal L}_{0}}} (v))$ and
$\eta(\B^{[0,\lambda], \trivial}_{\even, {\mathcal P}_{-, {\mathcal L}_{0}}} (v))$ (cf. Definition \ref{Definition:3.51}),
it is straightforward that
$\eta(\B^{[0,\lambda]}_{\even, {\mathcal P}_{-, {\mathcal L}_{0}}} (v))$ and
$\eta(\B^{[0,\lambda], \trivial}_{\even, {\mathcal P}_{-, {\mathcal L}_{0}}} (v))$ are,
on modulo ${\Bbb Z}$, independent of $v$.
Since
\begin{eqnarray*}
\eta(\B_{\even, {\mathcal P}_{-, {\mathcal L}_{0}}} (v)) & = & \eta(\B^{[0,\lambda]}_{\even, {\mathcal P}_{-, {\mathcal L}_{0}}} (v)) +
\eta(\B^{(\lambda, \infty)}_{\even, {\mathcal P}_{-, {\mathcal L}_{0}}} (v)),  \\
\eta(\B^{\trivial}_{\even, {\mathcal P}_{-, {\mathcal L}_{0}}} (v)) & = & \eta(\B^{[0,\lambda], \trivial}_{\even, {\mathcal P}_{-, {\mathcal L}_{0}}} (v)) +
\eta(\B^{(\lambda, \infty), \trivial}_{\even, {\mathcal P}_{-, {\mathcal L}_{0}}} (v)),
\end{eqnarray*}

\noindent
Corollary \ref{Corollary:4.9} implies that (\ref{E:5.15}) is independent of $v \in (-\delta, \delta)$ modulo ${\Bbb Z}$.
Theorem \ref{Theorem:2.22} shows that $\B^{(\lambda,\infty)}$ has no zero eigenvalue nor pure imaginary eigenvalues for large enough $\lambda \in {\Bbb R}$
and Lemma \ref{Lemma:2.11} shows that $\Dim \Ker \B^{\trivial}_{\even, {\mathcal P}_{-, {\mathcal L}_{0}}}(v)$ is a topological invariant and hence
independent of $v$. These facts show that both
$\eta(\B^{(\lambda,\infty)}_{\even, {\mathcal P}_{-, {\mathcal L}_{0}}}(v))$ and $\eta_{\B^{\trivial}_{\even, {\mathcal P}_{-, {\mathcal L}_{0}}}(v)}(0)$
are continuous with respect to $v$ and have no integer jump, which completes the proof of the lemma.
\end{proof}

\noindent
Summarizing the above argument, we obtain the following result.

\begin{theorem}  \label{Theorem:6.8}
We assume the same assumptions as in Definition \ref{Definition:6.9}. Then the refined analytic torsions
$\rho_{an, \widetilde{\mathcal{P}}_{0}}(g^{M}, \nabla)$ and $\rho_{an, \widetilde{\mathcal{P}}_{1}}(g^{M}, \nabla)$
are independent of the choice of the Agmon angles and invariant under the change of metrics in the interior of $M$.
\end{theorem}

\vspace{0.3 cm}

\subsection{Ray-Singer norm on the determinant line of cohomologies in case of Hermitian connections}

In this subsection we discuss briefly the Ray-Singer norm on the determinant line of the cohomologies
only when $\nabla$ is a Hermitian connection.
In this case $\B^{2}_{\widetilde{\mathcal{P}}_{0}}$ is a non-negative self-adjoint operator
and hence we take $- \pi$ as an Agmon angle.
For $\lambda \geq 0$ the cohomologies of
$\Omega^{\bullet, \infty}_{{\widetilde {\mathcal P}}_{0}, [0, \lambda]}(M, E)$ is naturally isomorphic to
$H^{\bullet}_{\widetilde{\mathcal{P}}_{0}}(M, E)$ (Lemma \ref{Lemma:6.1}).
We define the determinant lines $\Det \left( \Omega^{\bullet, \infty}_{{\widetilde {\mathcal P}}_{0}, [0, \lambda]}(M, E) \right)$ and
$\Det \left( H^{\bullet}_{\widetilde{\mathcal{P}}_{0}}(M, E) \right)$ as in (\ref{E:5.11}).
Then
$\Det \left( \Omega^{\bullet, \infty}_{{\widetilde {\mathcal P}}_{0}, [0, \lambda]}(M, E) \right)$ is naturally isomorphic to
$\Det \left( H^{\bullet}_{\widetilde{\mathcal{P}}_{0}}(M, E) \right)$
and we denote this natural isomorphism by $\phi_{\lambda}$, {\it i.e.}
\begin{equation} \label{E:5.12}
\phi_{\lambda} : \Det \left( \Omega^{\bullet, \infty}_{{\widetilde {\mathcal P}}_{0}, [0, \lambda]}(M, E) \right) \rightarrow
\Det \left( H^{\bullet}_{\widetilde{\mathcal{P}}_{0}}(M, E) \right).
\end{equation}
The scalar product $\langle \hspace{0.1 cm}, \hspace{0.1 cm} \rangle_{M}$ on $\Omega^{\bullet, \infty}_{{\widetilde {\mathcal P}}_{0}, [0, \lambda]}(M, E)$
defined by $g^{E}$ and ${\frak h}^{E}$ induces a scalar product on   \newline
$\Det \left( \Omega^{\bullet, \infty}_{{\widetilde {\mathcal P}}_{0}, [0, \lambda]}(M, E) \right)$.
Let $\parallel \cdot \parallel_{\lambda}$ denote the metric on $\Det \left( H^{\bullet}_{\widetilde{\mathcal{P}}_{0}}(M, E) \right)$
such that the isomorphism (\ref{E:5.12}) is an isometry.
We define the Ray-Singer norm on $\Det \left( H^{\bullet}_{\widetilde{\mathcal{P}}_{0}}(M, E) \right)$ by
\begin{equation} \label{E:5.13}
\parallel \cdot \parallel^{RS}_{\Det ( H^{\bullet}_{\widetilde{\mathcal{P}}_{0}}(M, E) )} \hspace{0.2 cm}:= \hspace{0.2 cm}
\parallel \cdot \parallel_{\lambda} \hspace{0.1 cm} \cdot \hspace{0.2 cm} T^{RS}_{(\lambda, \infty)}(\nabla),
\end{equation}
where $T^{RS}_{(\lambda, \infty)}(\nabla)$ is defined by
\begin{equation} \label{E:5.14}
T^{RS}_{(\lambda, \infty)}(\nabla) \hspace{0.2 cm} := \hspace{0.2 cm} exp\left\{ \frac{1}{2} \sum_{q=0}^{m} (-1)^{q} q \cdot \log \Det_{- \pi}
\left( \B_{q}^{2}|_{( \Omega^{q, \infty}_{{\widetilde {\mathcal P}}_{0}, (\lambda, \infty)}(M, E) )} \right) \right\}
\hspace{0.1 cm} = \hspace{0.1 cm} \frac{1}{e^{\xi_{\lambda, \widetilde{\mathcal{P}}_{0}}(\nabla,g^M)}}.
\end{equation}
The definition (\ref{E:5.13}) does not depend on the choice of $\lambda$.
Since $\Gamma$ is a unitary self-adjoint operator with respect to the scalar product $\langle \hspace{0.1 cm}, \hspace{0.1 cm} \rangle_{M}$,
we have $\parallel \rho_{\widetilde{\mathcal{P}}_{0}, [0, \lambda]} \parallel_{\lambda} = 1$ (cf. Lemma 4.5 in \cite{BK2} and Theorem 6.2 in \cite{Ve1}) and hence
$$
\parallel  \rho_{an, \widetilde{\mathcal{P}}_{0}}(\nabla) \parallel^{RS}_{\Det ( H^{\bullet}_{\widetilde{\mathcal{P}}_{0}}(M, E) )}
\hspace{0.2 cm} = 1.
$$

\vspace{0.3 cm}

\vspace{0.5 cm}

\end{document}